\numberwithin{equation}{section}
\theoremstyle{plain}
\newtheorem{lemma}{Lemma}
\newtheorem{theorem}{Theorem}
\newtheorem{corollary}{Corollary}
\newtheorem{proposition}{Proposition}
\theoremstyle{definition}
\newtheorem{remark}{Remark}
\newcommand{\bfb}{\mathbf{b}}
\newcommand{\bfP}{\mathbf{P}}
\newcommand{\nt}[1]{\lfloor nt_{#1} \rfloor}
\newcommand{\R}{\mathbb{R}}
\newcommand{\lb}{\left(}
\newcommand{\rb}{\right)}
\newcommand{\Var}{\operatorname{Var}}
\newcommand{\cov}{\operatorname{cov}}
\newcommand{\E}{\mathbb{E}}
\newcommand{\N}{\mathbb{N}}
\newcommand{\bfSigmahat}{	\hat{ \boldsymbol{\Sigma} }}
\newcommand{\bfSigma}{\boldsymbol{\Sigma}}
\newcommand{\bfX}{\mathbf{X}}
\newcommand{\bfI}{\mathbf{I}}
\newcommand{\bfy}{\mathbf{y}}
\newcommand{\op}{o_{\PR}(1)}
\newcommand{\MP}{Mar\v cenko--Pastur }
\newcommand{\cond}{\stackrel{\mathcal{D}}{\to}}
\newcommand{\conp}{\stackrel{\mathbb{P}}{\to}}
\newcommand{\tr}{\operatorname{tr}}
\newcommand{\inv}{^{-1}}
\newcommand{\PR}{\mathbb{P}}
\newcommand{\bfx}{\mathbf{x}}
\newcommand{\bfB}{\mathbf{B}}
\newcommand{\bfC}{\mathbf{C}}
\newcommand{\Sigmacen}{\bfSigmahat^{\textnormal{cen}}}
\newcommand{\bfS}{\mathbf{S}}
\begin{document}

\begin{frontmatter}
\title{Sequential Eigenvalue Statistics for Change-Point Detection in Covariance Matrices}
\runtitle{Detecting Change Points in Covariances}

\begin{aug}
\author[A]{\inits{ND}\fnms{Nina}~\snm{Dörnemann}\ead[label=e1]{ndoernemann@math.au.dk}}
\author[B]{\inits{HD}\fnms{Holger}~\snm{Dette}\ead[label=e2]{holger.dette@ruhr-uni-bochum.de}}
\address[A]{Corresponding author; Department of Mathematics, Aarhus University, Denmark\printead[presep={,\ }]{e1}}

\address[B]{Department of Mathematics, Ruhr University Bochum, Germany\printead[presep={,\ }]{e2}}
\end{aug}

\begin{abstract}
Testing for change points in sequences of covariance matrices is an important and equally challenging problem in statistical methodology with applications in various fields. Motivated by the observation that even in cases where the ratio between dimension and sample size is as small as $0.05$, tests based on a fixed-dimension asymptotics do not keep their preassigned level,  we propose to derive  critical values of test statistics using an asymptotic regime where the dimension diverges at the same rate as the sample size. 
  This paper introduces a novel and well-founded statistical methodology for detecting change points in a sequence of moderately dimensional covariance matrices. Our approach utilizes a min-type statistic based on a sequential process of likelihood ratio statistics. This is used to construct a test for the hypothesis of 
 the existence of a change point with a corresponding estimator for its location. We  provide theoretical guarantees by thoroughly analyzing the asymptotic properties of the sequential process of likelihood ratio 
 statistics. In particular, we  prove weak convergence towards a Gaussian process under the null hypothesis of no change. To identify the challenging dependency structure between consecutive test statistics, we employ tools from random matrix theory and stochastic processes. 
\end{abstract}

\begin{keyword}
\kwd{Change point analysis}
\kwd{likelihood ratio test}
\kwd{covariance matrices}
\kwd{random matrix theory}
\kwd{sequential processes}
\end{keyword}

\end{frontmatter}


\section{Introduction}

Having its origins in quality control  \citep[see][for two early references]{wald1945, page1954},
change point detection has  been an extremely   active field of research until today with numerous applications in finance, genetics, seismology or sports to name just a few.
 In the last decade, a large part of the literature on change point detection considers the problem of detecting a change point in a high-dimensional sequence of means  \citep[see][among many others]{jirak2015, chofryz2015, dettegoesmann2020, faridazaid,liuetal2020, liuetal2021, chengwangwu, wangvolgushev, zhangetal2022}.

Compared to the vast body of work on the change-point problem for a sequence of high-dimensional means, the 
literature on the problem of detecting structural breaks in the corresponding covariance matrices is relatively  scarce. For the   low dimensional setting we refer to \cite{ ChenGupta2004}, \cite{Lavielle2006}, \cite{galeanopena2007},
\cite{Aue2009b} and  \cite{ dettewied2016}, among others, who study different methods and  aspects of the change point problem under the assumption that the sample size converges to infinity while the dimension is fixed. We also refer to Theorem 1.1.2 in \cite{CsorgoHorvath1997}
who provide a test statistic and its asymptotic distribution  under the null hypothesis for  normally  distributed data. However, even in cases where the ratio between dimension and sample size is rather  small,   it can be observed that statistical guarantees derived from fixed-dimension asymptotics 
can be misleading.  For instance, we display in Table
\ref{table_hovarth} the simulated type I 
error of two commonly used tests for a change point in  a sequence of covariance matrices.
 The first method  (CH)  is based on sequential likelihood ratio statistics, where the critical values have been determined by classical asymptotic arguments assuming that the dimension is fixed \citep[see Theorem 1.1.2 in][]{CsorgoHorvath1997}.   The second approach (AHHR) is  a test proposed by \cite{Aue2009b}, which is based on a quadratic form of the vectorized CUSUM statistic of the empirical covariance matrix. Again, the determination of critical values relies on fixed-dimensional asymptotics.
We observe that even in the case where the ratio between the dimension  and sample size is as small as $0.05$, the nominal level $\alpha=0.05$ of the CH test is exceeded by more than a factor of three. 
On the other hand,  the AHHR test  provides only a reasonable approximation of the nominal level if the ratio between dimension and sample size is $0.025$. Note that this test requires the inversion of an estimate  of a large dimensional covariance matrix and is only applicable if the sample size is larger than the squared  dimension. 

\begin{table}[h]
    \centering
    \begin{tabular}{ccccccc}
        Dimension & & 5 & 10 & 15 & 20 & 25  \\
  \hline
  \multirow{2}{*}{Empirical level} & CH & 0.05 & 0.16 & 0.39 & 0.82 & 1.00  \\
         & AHHR & 0.03 & 0.01 & 0.00 & - & - \\ 
         \hline
    \end{tabular}
    \caption{\it  Simulated type I errors of the 
     sequential likelihood ratio
     test \citep[Theorem 1.1.2 in ][]{CsorgoHorvath1997}  and  the test of \cite{Aue2009b}  for a sample size of $n=200$
    ($500$ simulation runs, nominal level $\alpha =0.05$, standard normally distributed data). Critical values are determined by fixed dimension asymptotics.
    If "-" is reported,  the corresponding test is not applicable.} 
    \label{table_hovarth}
\end{table}

Meanwhile, several authors have also discussed the problem  of estimating a change point in a sequence of covariance matrices in the high-dimensional regime. For example, \cite{AvanesovBuzun2018}  propose a multiscale approach to estimate multiple change points, while \cite{ WangYuRinaldo2021} investigate the optimality of  binary  and  wild binary segmentation for multiple change point detection. We further mention the work of \cite{dettePanYang2022}, who propose a two-stage approach  to detect the location of a change point in a sequence of very high-dimensional covariance matrices.  
\cite{li2024efficient} pursue a similar approach to develop a change-point test for high-dimensional correlation matrices.

The literature on testing for change points is relatively scarce. In principle, one can develop change point analysis based on a vectorization of the covariance matrices  using  inference tools for a sequence of means. This approach essentially boils down to 
comparing the matrices before and after the change point with respect to a vector norm.
However, in general, this approach does  not yield an asymptotically distribution free test statistic. Moreover,
as pointed out by \cite{ryankillick2023}, such distances  do not reflect the  geometry induced on the space of positive definite matrices.
Their work introduces a change-point test based on an alternative distance  defined on the space of positive definite matrices, which compares sequentially the multivariate ratio $\bfSigma^{-1}_1\bfSigma_2$ of the two covariance matrices $\bfSigma_1$ and $\bfSigma_2$  before and after a potential change point with the identity matrix. As a consequence, under the null hypothesis of no
change point,  their test statistic is  independent of the underlying covariance structure, which makes it possible to derive quantiles for statistical testing in the regime where the dimension diverges at the same rate as the sample size.
However, the approach of these authors is based on a combination of a point-wise limit theorem from random matrix theory 
with a Bonferroni correction. Therefore, as 
pointed out in Section 4 of \cite{ryankillick2023}, the resulting  test may be conservative in applications. Moreover, this methodology is tailored to centered data, and it is demonstrated in  \cite{zheng_et_al_2015}, that an empirical centering introduces a non-negligible bias in the central limit theorem for the corresponding linear spectral statistic. 

In this paper, we propose an  alternative test for detecting a change point in a sequence of covariance matrices,  which takes the strong dependence between consecutive test statistics into account to avoid the drawbacks of previous works. Our approach is 
based on  a sequential process  of likelihood ratio test (LRT) statistics, where the dimension of the data grows at the same rate as the sample size.
We combine tools from  random matrix theory  and stochastic processes to develop and analyze statistical methodology for change point analysis 
in the covariance  structure.
Random matrix theory is a common tool to investigate  asymptotic properties of LRT in moderately
high-dimensional scenarios  for classical testing problems.
An early reference in this direction is \cite{bai2009corrections}, who study one- and two-sample problems for covariance matrices and provide Gaussian approximations for LRTs in high dimensions. Moreover, \cite{jiang_yang_2013} establish central limit theorems for several classical LRT statistics under the null hypotheses. Both works rely on the normal assumption. 

Since these seminal works, numerous researchers have investigated related problems \citep[see][among others]{jiang2015,dettedoernemann2020,  bao2022spectral, dornemann2023likelihood,heiny2021log}. 
None of these papers considers sequential LRT statistics to develop change point analysis. 
  Moreover, our approach  is conceptually different from most existing work on testing for change points in high-dimensional data (see, for example, \cite{liuetal2021} for changes in a  mean vector and \cite{wang2021} for changes in a  covariance matrix) and does neither require a sparsity  nor a sub-Gaussian assumption.
Having this line of literature in mind, we can summarize the main contributions of this paper.
\begin{itemize}
    \item  We propose a novel methodology to test for a change point in a sequence of moderately high-dimensional covariance matrices based on a minimum of sequential LRT statistics. 
    Under the null hypothesis, this statistic admits a simple limiting distribution in the regime where the dimension diverges proportionally to the sample size. 
    Unlike most other approaches, the distribution of the test statistic under the null hypothesis is invariant to the population covariance matrix. 
    This result facilitates the introduction of a  simple asymptotic testing procedure with favorable finite-sample properties. Most notably, our approach takes the strong dependence structure between consecutive test statistics into account, whose analysis has been recognized as a challenging problem in the literature  \citep[see][]{ryankillick2023}, and which has  not been addressed in previous works. 
      \item 
     Investigating  sequential statistics introduces new mathematical challenges compared to the analysis of the standard (non-sequential) LRT, namely (i) the convergence of the finite-dimensional distributions and (ii) the asymptotic tightness of the sequential log-LRT statistics.  Indeed, the weak convergence result implied by (i) and (ii) is a novel, technically challenging contribution, given that sequential LRT statistics have not been studied in such a framework before. 

     To establish (i), we derive an asymptotic representation of the test statistics 
and apply a martingale CLT to the dominating term in this decomposition. Note that for given time points $t_1,t_2 \in [0,1]$, the corresponding LRT statistics are highly correlated, and a nuanced analysis is required to determine their covariance. 
      Regarding (ii), we show asymptotic equicontinuity of the sequential log-LRT statistics by deriving uniform inequalities for the moments of the increments of the process.
     \item Along the way, we develop a consistent estimator of the kurtosis.
As numerous results in random matrix theory and high-dimensional statistics demonstrate that spectral statistics depend critically on whether the kurtosis equals three \citep{bai2004, zhang2022asymptotic, panzhou2008, zheng2012, yin2023central}, this estimator is believed to be of independent methodological interest.
     
\end{itemize}

The remaining part of this work is structured as follows. 
In Section \ref{sec2}, we present the new  method to detect a change-point in a covariance structure of moderate dimension, and provide the main theoretical guarantees. In numerical experiments given in Section \ref{sec_sim}, we compare the finite-sample size properties of our test as well as the change-point estimator to other approaches. The proofs of our theoretical results are deferred to Section \ref{seca} and the supplementary material.

\section{Change point analysis by a sequential  LRT process}
\label{sec2}

Let $\bfy_1 , \ldots, \bfy_n$ be a sample of independent random vectors such that $\bfy_i = (y_{1i}, \ldots, y_{pi})^\top =  \bfSigma_i^{1/2} \bfx_i$ for i.i.d. $p$-dimensional random vectors $\bfx_i$ and covariance matrices $\bfSigma_i = \bfSigma_{i,n}$, $1 \leq i \leq n.$
We are interested in testing  for a change in the covariance structure of $\bfy_1, \ldots, \bfy_n$, and consider  the hypotheses
 	\begin{align} \label{null}
 		H_0: & \boldsymbol{\Sigma}_1 = \ldots  = \boldsymbol{\Sigma}_n  
   \end{align}
versus 
\begin{align}
    H_1: &  \bfSigma_1 = \ldots = \bfSigma_{\lfloor nt^\star \rfloor } \neq \bfSigma_{\lfloor nt^\star \rfloor +1 } = \ldots = \bfSigma_n   , \label{alternative}
 	\end{align}
  where the location $t^\star \in (t_0, 1-t_0)$ of the change point is unknown and $t_0>0$ is a positive constant. 
We define   
	\begin{align}
		\bfSigmahat_{i:j}^{\textnormal{cen}} & 
   = \frac{1}{j - i } \sum\limits_{k=i}^j \lb \bfy_k - \overline{\bfy}_{i:j} \rb 
   \lb  \bfy_k - \overline{\bfy}_{i:j} \rb ^\top , \quad 1 \leq i \leq j \leq n, \label{def_sigma_hat_cen_ij} 
	\end{align}	 
as the  sample covariance matrices calculated from the data $\bfy_i , \ldots , \bfy_j$,
 where 
 \begin{align*}
     \overline{\bfy}_{i:j} = \frac{1}{j - i +1} \sum_{k=i}^j \bfy_k 
 \end{align*}
 denotes the sample mean of $\bfy_i , \ldots , \bfy_j$.  Finally,  we define 
 \begin{align}
  \bfSigmahat^{\textnormal{cen}} & = \bfSigmahat_{1:n}^{\textnormal{cen}}, \label{def_sigma_hat_cen}
	\end{align}	 
 as the sample covariance matrix calculated from  the full sample and consider the statistic
	\begin{align} \label{def_statistic_cen}
		\Lambda_{n,t}^{\textnormal{cen}} = \frac{ \big| \Sigmacen_{1:\lfloor nt \rfloor } \big|^{\frac{1}{2} \nt{} }  \big| \Sigmacen_{(\nt{} +1):n } \big|^{\frac{1}{2} (n -\nt{}) } }{\big|\Sigmacen \big|^{\frac{1}{2}n }}, \quad t\in (0,1).
	\end{align} 
 If, for fixed $t$,
 $\bfy_1 ,  \ldots , \bfy_{\lfloor nt  \rfloor } $ and  $\bfy_{\lfloor nt  \rfloor +1 } ,  \ldots , \bfy_n $ are two independent samples of i.i.d. random variables with $\E [\bfy_1 ] = \mu_1$, Var$(\bfy_1)=\bfSigma_1$
 and $\E [\bfy_n ] = \mu_n$, Var$(\bfy_n)=\bfSigma_n$, then 
 $\Lambda_{n,t}^{\textnormal{cen}} $ is the likelihood ratio test statistic (LRT)   for the hypotheses $
    \tilde H_0: ~  \bfSigma_1  = \bfSigma_n, ~ \mu_1 =  \mu_n
   $ 
versus 
$ \tilde H_1: ~ \bfSigma_1  \neq  \bfSigma_n$.
This problem has been investigated by several authors in the moderately high-dimensional regime \citep[see, for example,][]{lichen2012,jiang_yang_2013, dornemann2023likelihood, dettedoernemann2020, jiang2015, guo2024asymptotic}. In contrast to these works,
consistent change point inference  
on the basis of likelihood ratio tests 
requires the analysis of the full process $ ( \Lambda_{n,t}^{\textnormal{cen}}  )_{t\in [t_0,1-t_0]}$.   

To formulate the statistical properties of this process,   we make the following assumptions.
  \begin{enumerate}[label=(A-\arabic*)]
  \item \label{ass_mp_regime} $y_n = p/n \to y \in (0,1) $ as $n\to\infty$ such that $y < t_0 \wedge (1 - t_0)$ for some $t_0 \in (0,1)$.
  \item \label{ass_mom} The components $x_{ji}$ of the vector  $\bfx_i$ are i.i.d. with respect to some continuous distribution ($1 \leq i \leq n, 1 \leq j \leq p$), and satisfy $ \E [ x_{11}^2] = 1$, $\E[x_{11}^4]>1$ and $\E |x_{11}|^{4+\delta}  < \infty$ for some $\delta > 0$. 
  \item \label{ass_sigma_null} We have uniformly with respect to   $n\in\N$ 
  \begin{align*}
      0 < \lambda_{\min} (\bfSigma_1) \leq \lambda_{\max}(\bfSigma_1) < \infty. 
  \end{align*}
 \end{enumerate}
An important ingredient  for  an appropriate centering  of $\log \Lambda_{n,t}^{\textnormal{cen}},$  is an estimator of the kurtosis 
$$
\kappa_n = \E[x_{11}^4] 
$$
of the unobserved random variable $x_{11}$, which can be represented  by 
formula  (9.8.6) in \cite{bai2004} in the form 
\begin{align} \label{eq_formula_kappa} 
    \kappa _n
  = 
    3 + \frac{\Var \lb \| \bfy_{1} - \E[\bfy_{1}] \|_2^2 \rb - 2 \| \bfSigma \|_F^2}{\sum_{j=1}^p \Sigma_{jj}^4}.
\end{align}
For its estimation, we therefore introduce the quantities
 \begin{align*}
     \hat\tau_n & = \tr  \big ( (\hat\bfSigma^{\textnormal{cen}} )^2 \big ) - \frac{1}{n} \big (  \tr \hat\bfSigma^{\textnormal{cen}} \big )^2 , \\
     \hat\nu_n & = \frac{1}{n-1} \sum_{i=1}^n \lb \| \bfy_i - \overline{\bfy} \|_2^2 - \frac{1}{n} \sum_{i'=1}^n \| \bfy_{i'} - \overline{\bfy} \|_2^2 \rb^2 , \\
     \hat\omega_n & = \sum_{j=1}^p \Big \{  \frac{1}{n} \sum_{i=1}^n \Big ( y_{ji} - \frac{1}{n} \sum_{i'=1}^n y_{ji'} \Big  )^2 \Big\} ^2.
 \end{align*}
and define the estimator
 \begin{align*}
     \hat\kappa_n & = \max \Big \{  3 + \frac{\hat\nu_n - 2\hat\tau_n}{\hat\omega_n}, 1 \Big\}.  
 \end{align*}
 Our first result provides the consistency of  $\hat\kappa_n$ 
 for  $\kappa_n$  under the null hypothesis. Its proof is postponed to Section \ref{sec_kappa}.
\begin{proposition} \label{lem_consistency_kappa}
     Suppose that assumptions \ref{ass_mom}-\ref{ass_sigma_null} are satisfied, and $p/n\to y\in (0,\infty)$ as $n\to\infty$. Then, under $H_0$, we have
   \begin{align*}
       \frac{\hat\kappa_n}{\kappa_n} \conp 1. 
   \end{align*}
\end{proposition}
\begin{remark}
In the case $\E[y_{11}]=0$, a related estimator for $\kappa_n$ was proposed by \cite{lopes2019bootstrapping}. To the best of our knowledge, $\hat{\kappa}_n$ is the first estimator to be equipped with theoretical guarantees under general (possibly nonzero) means in the regime where the dimension is asymptotically proportionally to the sample size. This estimator is believed to be of independent methodological interest. Indeed, the excess kurtosis 
$\kappa_n -3$ is a key quantity in extending asymptotic results for spectral statistics from the Gaussian case to non-Gaussian settings, see, e.g., \cite{bai2004, panzhou2008, zheng2012, yin2023central, najimyao2016, zhang2022asymptotic}. Since this parameter directly affects the limiting behavior of eigenvalue-based statistics, we expect that 
$\hat\kappa_n$
  will find applications in testing problems for moderately high-dimensional data.
\end{remark}
We will show  that 
under the null hypothesis we can  approximate the expected value and the variance of $2\log \Lambda_{n,t}^{\textnormal{cen}}$ by
 \begin{align}
     \tilde \mu_{n,t} & = 
        n \Big ( n - p - \frac{3}{2} \Big)  \log \Big(  1 - \frac{p}{n - 1} \Big ) 
        - \nt{} \Big (  \nt{} - p - \frac{3}{2} \Big ) \log \Big ( 1- \frac{p}{\nt{} - 1} \Big )  \nonumber \\ & \quad 
        - (n - \nt{}  ) \Big (  n - \nt{} - p - \frac{3}{2} \Big ) \log \Big (  1- \frac{p}{n - \nt{} - 1} \Big )  + \frac{ (\hat\kappa_n - 3) p}{2} \label{eq_def_mean} 
            \end{align}
            and 
        \begin{align}
        \sigma_{n,t}^2 & =  2\log \lb 1- \frac{p}{n} \rb -  2 \lb \frac{\nt{}}{n}  \rb^2  \log \lb  1 - \frac{p}{\nt{}} \rb  
       - 2 \lb \frac{n-\nt{}}{n} \rb^2 \log \lb  1 - \frac{p}{n-\nt{}} \rb  \label{eq_def_var}
 \end{align}
 respectively. With these quantities we consider the standardized LRT and  define the min-type statistic
 \begin{align*}
     M_n^{\textnormal{cen}} = \min_{t\in [t_0,1-t_0]}  \frac{ 2 \log \Lambda_{n,t}^{\textnormal{cen}} - \tilde\mu_{n,t} } {n \sigma_{n,t} }~.
 \end{align*} 
In the next theorem, we provide the limiting distribution of $M_n^{\textnormal{cen}}$ under the null hypothesis of no change. 
\begin{theorem} \label{thm_main_cen}
   If Assumption  \ref{ass_mp_regime}, \ref{ass_mom} with $\delta>4$ and   Assumption   \ref{ass_sigma_null} are satisfied, then we have under $H_0$
    \begin{align} \label{det1}
      M_n^{\textnormal{cen}} \cond \min_{t\in [t_0,1-t_0]} \frac{ Z(t)}{\sqrt{\sigma(t,t)}} ,
    \end{align}  
     where $(Z(t))_{t\in [t_0,1 - t_0]} $ denotes a centered Gaussian process with covariance kernel  
     \begin{align} \nonumber
        \sigma(t_1,t_2) & =   \cov (Z(t_1), Z(t_2) )  \\
        & 
         \label{det102}
         = 
         2\log ( 1- y) -  2 t_1 t_2 \log ( 1 - y/t_2 ) 
         - 2 (1-t_1)t_2 \log  \Big ( 1 - \frac{(t_2 - t_1) y}{(1-t_1)t_2 } \Big ) 
        \\ & \quad 
       - 2 ( 1- t_1) (1 - t_2) \log ( 1 - y / (1-t_1) )
\nonumber 
    \end{align}   
     for $t_0 \leq t_1 \leq t_2 \leq 1 - t_0.$
\end{theorem}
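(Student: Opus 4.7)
The plan is to establish weak convergence of the sequential process
$$G_n(t) := \frac{2\log \Lambda_{n,t}^{\textnormal{cen}} - \tilde\mu_{n,t}}{n \sigma_{n,t}}, \quad t \in [t_0, 1-t_0],$$
in the space $C([t_0, 1-t_0])$ to the Gaussian process $Z(t)/\sqrt{\sigma(t,t)}$, and then to invoke the continuous mapping theorem with the minimum functional (which is continuous on $C([t_0, 1-t_0])$) to conclude \eqref{det1}. By Prokhorov's theorem this decomposes into (a) convergence of finite-dimensional distributions and (b) asymptotic tightness. Two preliminary reductions simplify the setup. The statistic
$$2\log\Lambda_{n,t}^{\textnormal{cen}} = \lfloor nt \rfloor \log \big|\bfSigmahat_{1:\lfloor nt \rfloor}^{\textnormal{cen}}\big| + (n - \lfloor nt \rfloor) \log \big|\bfSigmahat_{(\lfloor nt\rfloor+1):n}^{\textnormal{cen}}\big| - n\log \big|\bfSigmahat^{\textnormal{cen}}\big|$$
is invariant under $\bfy_i \mapsto \bfSigma^{1/2}\bfx_i$, so I would assume $\bfSigma = \bfI$. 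By Lemma \ref{lem_consistency_kappa} and Slutsky I may also replace $\hat\kappa_n$ with the population kurtosis $\kappa$ in $\tilde\mu_{n,t}$.

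For (a), fix $t_1 < \ldots < t_k$ in $[t_0, 1-t_0]$ and represent each centered log-determinant as a martingale sum with respect to the filtration $\mathcal{F}_i = \sigma(\bfy_1, \ldots, \bfy_i)$ via the telescoping identity $\log|\bfSigmahat| - \E\log|\bfSigmahat| = \sum_{i=1}^n (\E_i - \E_{i-1})\log|\bfSigmahat|$, where $\E_i = \E[\cdot | \mathcal{F}_i]$. Each conditional increment is then expanded using the Sherman--Morrison rank-one resolvent identity, bringing in the Stieltjes transform of the underlying sample covariance matrix. A multivariate martingale CLT then applies, the Lyapunov condition being verified with the help of $\E|x_{11}|^{4+\delta} < \infty$. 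The centering $\tilde\mu_{n,t}$ is designed to absorb both the standard Mar\v cenko--Pastur log-determinant mean and the kurtosis-induced bias $(\kappa-3)p/2$ arising from the empirical centering, in the spirit of the substitution principle of \cite{zheng_et_al_2015}. To identify the covariance kernel, I would sum the predictable quadratic variations of the three contributing martingales across the disjoint index intervals $[1, \lfloor nt_1 \rfloor]$, $(\lfloor nt_1\rfloor, \lfloor nt_2 \rfloor]$ and $(\lfloor nt_2 \rfloor, n]$, keeping track of which martingale difference contributes to which log-determinant, and evaluate the resulting Stieltjes-transform expressions at the appropriate aspect ratios $y/t_j$ and $y/(1-t_j)$. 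After algebraic simplification, the three logarithmic terms appearing in \eqref{det102} emerge.

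For (b), I would derive a Kolmogorov--Chentsov type increment bound
$$\E|G_n(t) - G_n(s)|^q \leq C|t-s|^{1+\epsilon}$$
for some $q > 2$ and $\epsilon > 0$, uniformly in $n$ and with $|t - s| \geq 1/n$ (the near-diagonal case being handled separately, exploiting the minimum over a discretized grid). The strategy is to apply Burkholder's inequality to the martingale decomposition from (a) for the increment $G_n(t) - G_n(s)$; the leading contribution is driven by the $\lfloor n(t-s) \rfloor$ observations distinguishing $\bfSigmahat_{1:\lfloor nt\rfloor}^{\textnormal{cen}}$ from $\bfSigmahat_{1:\lfloor ns\rfloor}^{\textnormal{cen}}$ and analogously for the second block. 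Higher-moment control of the corresponding rank-one quadratic forms, which is where the assumption $\delta > 4$ becomes essential, then yields the required polynomial decay. Combining (a) and (b) gives weak convergence in $C([t_0, 1-t_0])$, and the continuous mapping theorem closes the argument.

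The main obstacle is the covariance computation in (a). Because the three sample covariance matrices entering $\log \Lambda_{n,t}^{\textnormal{cen}}$ involve strongly overlapping samples as $t$ varies, the bookkeeping required to pair martingale increments with the correct Stieltjes-transform quantities, and to recover the precise piecewise-logarithmic structure of \eqref{det102}, is delicate. Moreover, handling the empirical-centering bias consistently across the three log-determinants (each with its own aspect ratio) further complicates the analysis and is the source of the novel difficulty distinguishing the sequential problem from the classical one-point high-dimensional log-determinant CLTs of \cite{jiang_yang_2013} and related works.
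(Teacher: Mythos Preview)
Your overall architecture---finite-dimensional distributions plus tightness, followed by the continuous mapping theorem for the minimum---matches the paper's. However, the central technical device differs. The paper does \emph{not} run a martingale over observations with a Sherman--Morrison/resolvent expansion. Instead, it first reduces $\Lambda_{n,t}^{\textnormal{cen}}$ to the non-centered analogue $\Lambda_{n,t}$ (based on $\bfSigmahat_{i:j}=\frac{1}{j-i+1}\sum_k\bfy_k\bfy_k^\top$) via the matrix determinant lemma and an almost-sure expansion of the quadratic form $\overline{\bfy}^\top\bfSigmahat^{-1}\overline{\bfy}$; this replaces your appeal to the substitution principle of \cite{zheng_et_al_2015}, which---as the paper's Remark~\ref{rema1}(1) stresses---is formulated for a single spectral statistic and does not directly cover the three coupled log-determinants here. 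For the non-centered process, the paper applies QR-decompositions to the data matrices to write $\log|n\hat\bfI|=\sum_{i=1}^p\log(\bfb_i^\top\bfP(i-1)\bfb_i)$, and then runs the martingale CLT with respect to the \emph{row} filtration $\mathcal{A}_i=\sigma(\bfb_1,\ldots,\bfb_i)$, $i=1,\ldots,p$, rather than your column filtration $\mathcal{F}_i=\sigma(\bfy_1,\ldots,\bfy_i)$. This choice pays off in the covariance calculation: the conditional second moments reduce to traces of products of projection matrices, and the cross-term $\sigma_1^2(\nt{1}+1,n,1,\nt{2})$ is handled by an explicit computation (Lemma~\ref{lem_cov_term}) involving $\tr\big(\bfS_{i,(\nt{1}+1):n}^{-1}\bfS_{i,1:\nt{2}}^{-1}\big)$, rather than by Stieltjes-transform algebra.

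Your column-based route is in principle viable---it is the Bai--Silverstein style for linear spectral statistics---but for the unbounded test function $\log x$ it requires extra care near the lower spectral edge, and the bookkeeping for the covariance kernel \eqref{det102} via resolvent cross-terms would be substantially heavier than the paper's projection-matrix computation. Two minor points: the process $t\mapsto G_n(t)$ is piecewise constant, so the natural ambient space is $\ell^\infty([t_0,1-t_0])$ (as the paper uses), not $C$; and for tightness the paper does not prove a single moment inequality but splits the increment into pieces $Z_1,Z_2$ with different integrability (Lemma~\ref{lem_mom_ineq_X}) and applies a chaining argument via Corollary~A.4 in \cite{tomecki}.
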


A proof of this result can be found in Section \ref{seca1}.
Note that the limiting distribution in \eqref{det1} contains no nuisance parameters. Consequently,
if  $q_{\alpha}$ denotes the $\alpha$-quantile of the limit distribution, the decision rule, which rejects the null hypothesis in  \eqref{null}, whenever  
        \begin{align}
        \label{test}
            M_n^{\textnormal{cen}} < q_{\alpha}.
        \end{align}
defines an asymptotic level $\alpha$-test for the hypotheses of  a change point in the sequence $\bfSigma_1 , \ldots , \bfSigma_n $. The quantile $q_\alpha$ can be found numerically, replacing the asymptotic ratio $y$ in \eqref{det102} by $p/n$. Then, Theorem \ref{thm_main_cen} implies that the level of the test \eqref{test} can be asymptotically controlled under $H_0$, that is, 
\begin{align*}
   \lim_{n\to\infty} \PR \lb M_n^{\textnormal{cen}} < q_{\alpha} \rb = \alpha. 
\end{align*}
    
\section{Finite-sample properties} \label{sec_sim}

\noindent \textbf{The necessity of $t_0$.} 
The parameter $t_0$ ensures the applicability of the likelihood-ratio principle and is determined by the user. Parameters of this type appear frequently in monitoring high-dimensional covariance structures \citep[see, for example,][]{ryankillick2023, dornemann2021linear,dornemann2024detecting}. 
In fact, there is one-to-one correspondence between $t_0$ and the minimum segment length parameter $\ell $ in \cite{ryankillick2023}, and thus $t_0$ underlies the same paradigm as $\ell$ outlined in the aforementioned work. On the one hand, small values of $t_0$ are likely to increase the type-I error. In such cases, the maximal statistic will be dominated by covariance estimates corresponding to potential change points $t$ close to $p/n$ (or, by symmetry, close to $1-p/n$) which admit large eigenvalues. 
On the other hand, in many applications, the user may want to avoid large values for $t_0$, as such choices shrink the localization interval for change-point candidates.
Therefore, it is important to understand how small the tuning parameter $t_0$ can be chosen without affecting the performance of the proposed method.
Regarding the selection of $t_0$, it should first be noted that the parameter is unitless and does not need to be adapted to the scale of the model.
By the design of the test statistic, a necessary lower bound will be $t_0 > p/n \vee (1-p/n).$ 
In our simulation study, we found that the testing method is stable if $t_0 > ( p/n +0.05 ) \vee 0.2$. If the user is primarily interested in estimating the change point location, they may select $t_0$ closer to the critical threshold $p/n \vee (1-p/n)$.
 
 \noindent \textbf{Estimating the change-point location.}
  If $H_0$ is rejected by the test \eqref{test}, it is natural to ask for the location of the change point. For this purpose, we propose the following estimator:
    \begin{align} \label{def_t_hat_star_cen}
        \hat \tau^\star 
        \in \operatorname{argmin}_{t \in [t_0,1-t_0]} \frac{2 \log \Lambda^{\textnormal{cen}}_{n,t} - \tilde\mu_{n,t}}{n}.
    \end{align}
 In Section \ref{sec_cp_est}, we investigate the numerical performance of $\hat\tau^\star$ and compare it to the estimators of 
 \cite{Aue2009b} and \cite{ryankillick2023}.

\subsection{Numerical experiments for change-point detection} \label{sec_sim1}

In the following, we provide numerical results on the performance of the new test \eqref{test} in comparison to the test proposed by \cite{ryankillick2023}.  All reported results are based on $500$ simulation runs, and the nominal level is $\alpha = 0.05$.
The change-point location is chosen as  $t^\star = 0.5$.

Recall that we observe the data $\bfy_i = \bfSigma_i^{1/2} \bfx_i$ for $1\leq i \leq n$, where $\bfSigma_i^{1/2}$ and $\bfx_i$ are not directly observed. 
We first consider independent 
 standard normal distributed entries ($x_{11}\sim \mathcal{N}(0,1)$) in the vectors $\bfx_i$ and  
\begin{align} 
    \bfSigma_1 = \bfI, \quad \bfSigma_n = \operatorname{diag}( 1, \ldots, 1, \underbrace{\eta, \ldots, \eta}_{p/2}) , \quad \eta \geq 1, ~
    \label{model1}
\end{align}
as the covariance matrices before and after the change point, 
where the case $\eta=1$ corresponds to null hypothesis \eqref{null}. The empirical rejection probabilities of the test \eqref{test}  are displayed in the left panels of Figure \ref{fig3} for  $(n,p)=(600,50)$ (first row) and $ (600,80) $ (middle row) and $ (800,100) $ (third row) and  various values of $\eta$. 
We observe that the test keeps its nominal level well and that the power increases quickly with $\eta$. For the sake of comparison, we also display the empirical rejection probabilities of the test proposed in \cite{ryankillick2023}. As stated by these authors, this test is conservative, and we observe a substantial improvement with respect to power by the new test \eqref{test}, which takes the dependencies of the statistics $\Lambda_{n,t}^{\textnormal{cen}} $ for different values of $t$ into account.

 Next, we consider an adaptation of \eqref{model2}, where the matrix $\bfSigma_n$  is randomly generated with a prescribed spectrum, that is 
\begin{align}
\bfSigma_1 = \bfI , \quad
    \bfSigma_n = \mathbf{U}_{\eta} \operatorname{diag}( 1, \ldots, 1, \underbrace{\eta, \ldots, \eta}_{p/2})  \mathbf{U}_{\eta}^\top , \quad 
    \eta \geq 1 ,
    \label{model2}
\end{align}
where $\mathbf{U}_{0} = \bfI$, and $\mathbf{U}_{\eta}$ are independent random matrices uniformly distributed on the orthogonal group
for  $\eta >1 $.  The independent entries in the matrix $\bfX $ are generated from a (uniform) 
$\mathcal{U}(0,1)$-distribution.
The corresponding results are
displayed in the right panels of Figure \ref{fig3}. Comparing these results with the left panels, we observe that the approximation of the nominal levels in the two models \eqref{model1} and \eqref{model2} is comparable. The new test shows a favorable performance under both alternatives \eqref{model1} and \eqref{model2} Notably, we observe an increase in power for \eqref{model2} compared to \eqref{model1}, as \eqref{model2} involves changes in both eigenvalues and eigenvectors, whereas \eqref{model1} has changes only in the eigenvalues.
In all cases under consideration, the new test outperforms the conservative method proposed by \cite{ryankillick2023} in terms of level approximation and power increase.

\begin{figure} 
    \centering
\includegraphics[width=0.49\columnwidth, height=0.35\textheight, keepaspectratio]{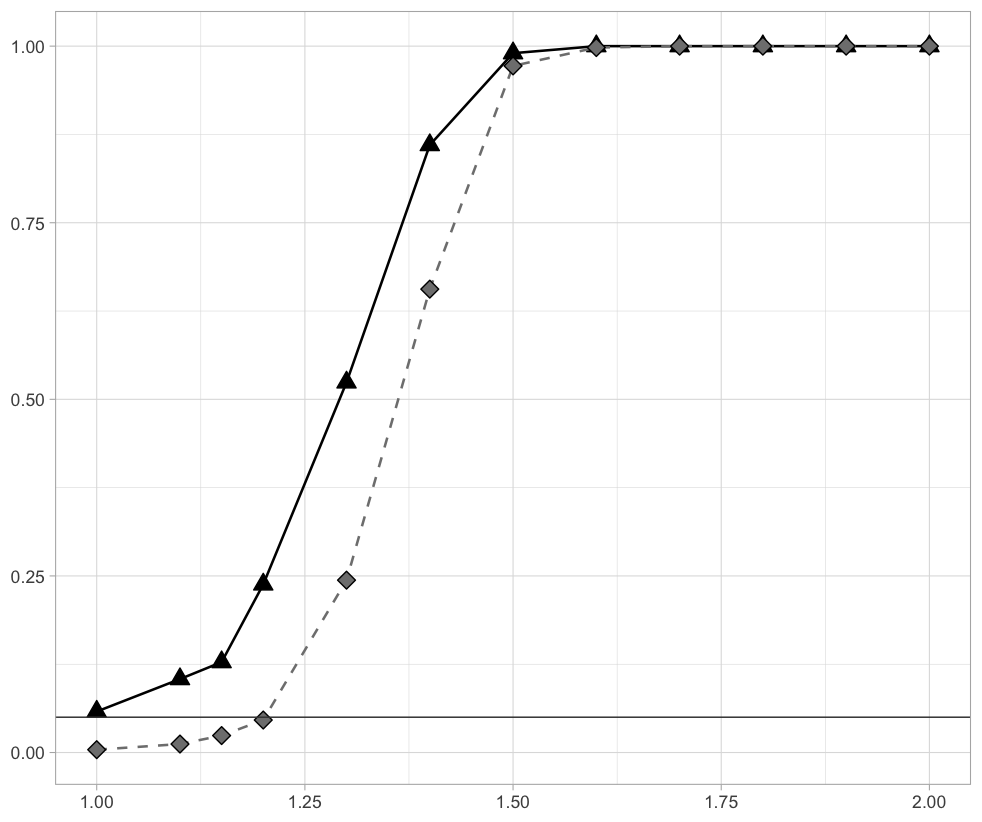}
\includegraphics[width=0.49\columnwidth, height=0.35\textheight, keepaspectratio]{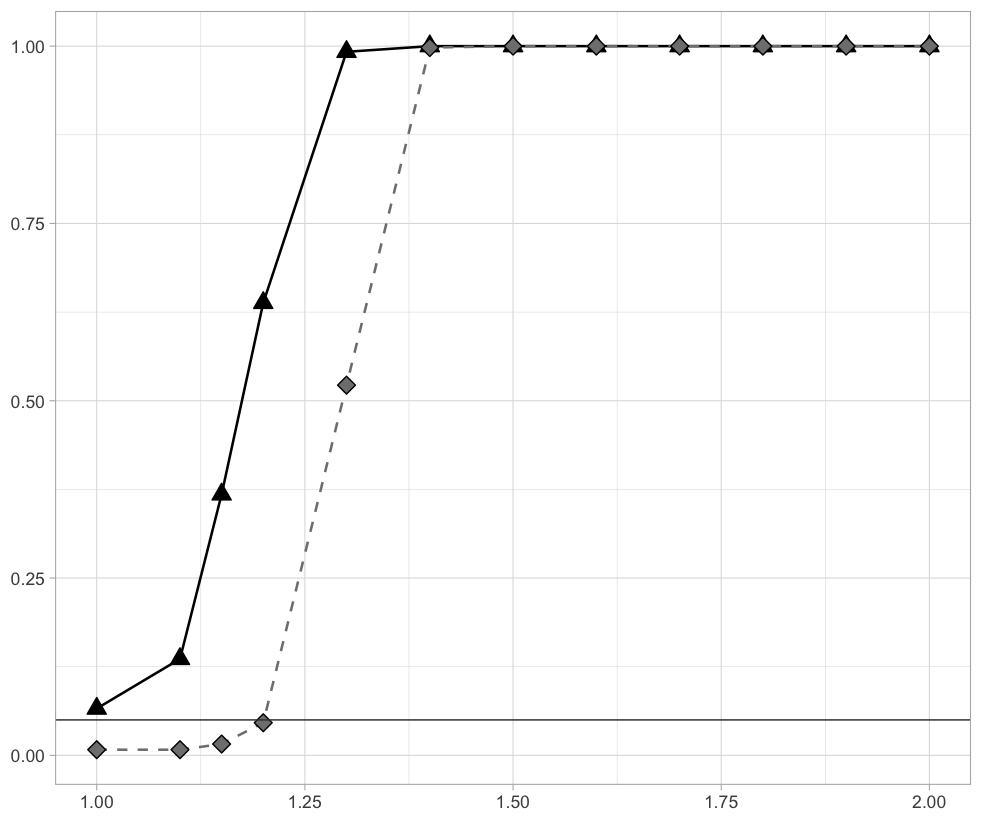} 
~~
\vskip .25cm 

\includegraphics[width=0.49\columnwidth, height=0.35\textheight, keepaspectratio]{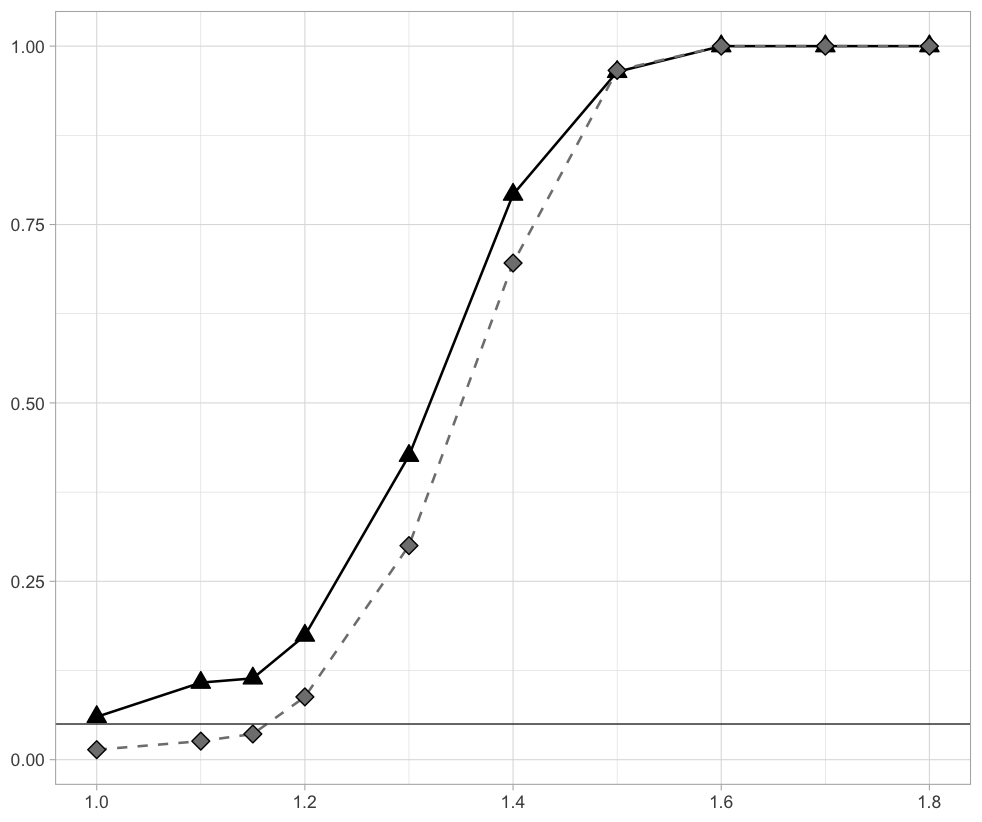}
\includegraphics[width=0.49\columnwidth, height=0.35\textheight, keepaspectratio]{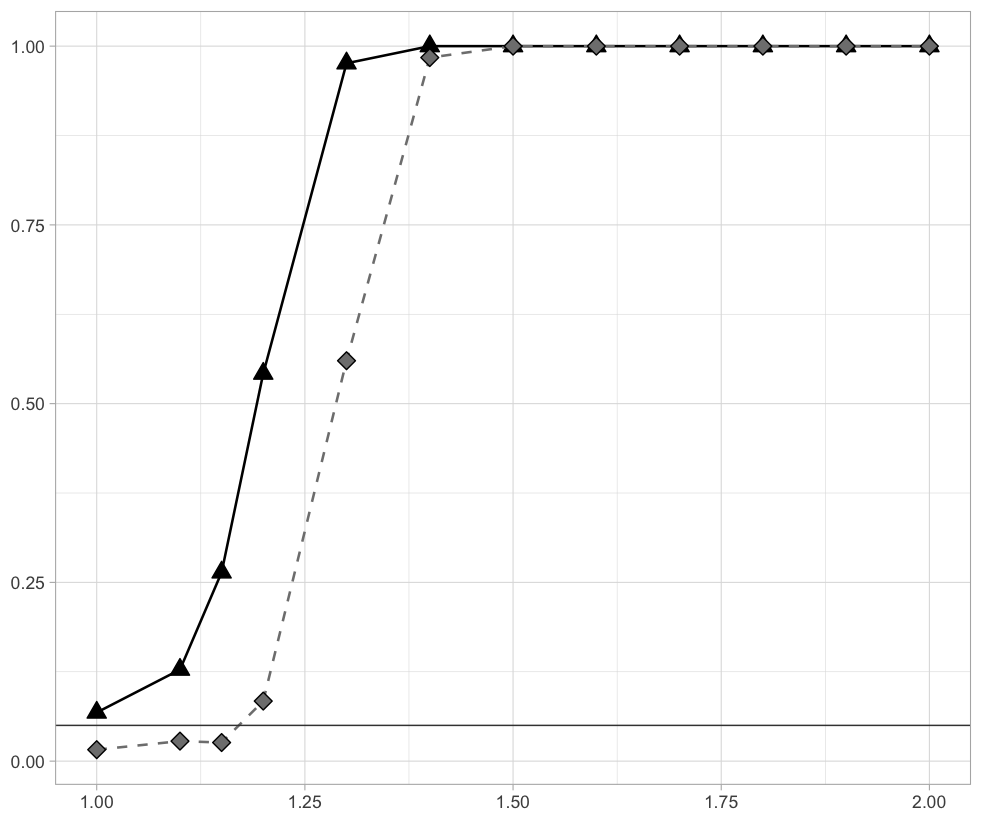} 
\vskip .25cm 

\includegraphics[width=0.49\columnwidth, height=0.35\textheight, keepaspectratio]{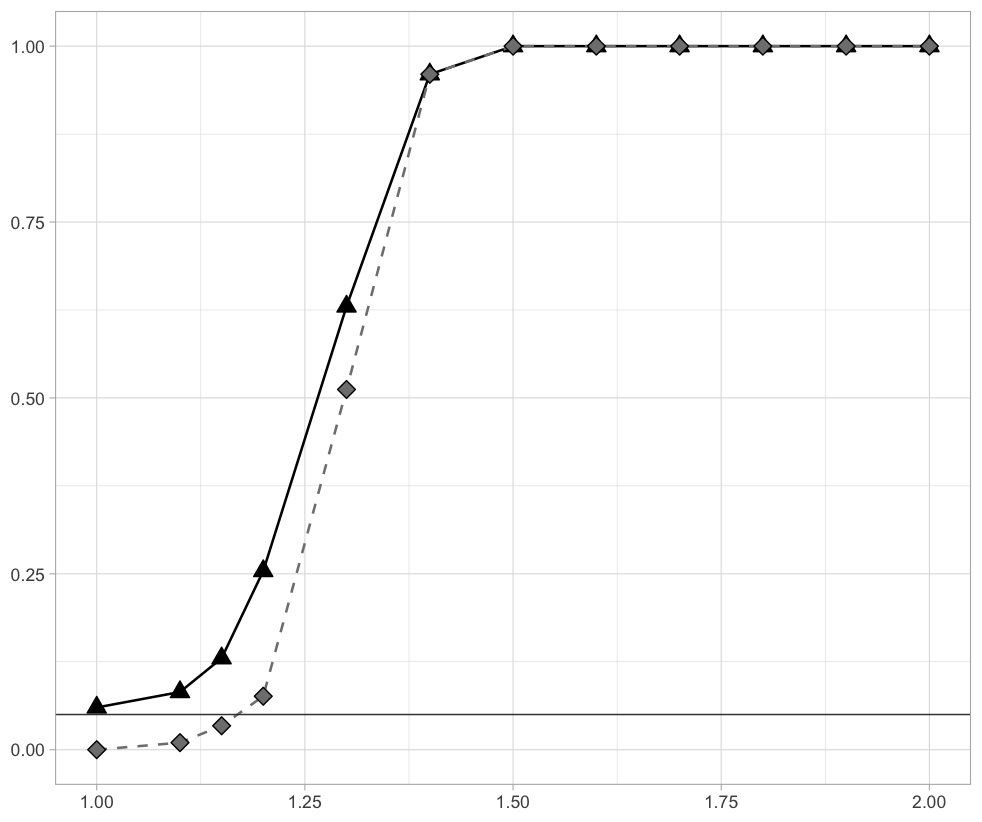}
\includegraphics[width=0.49\columnwidth, height=0.35\textheight, keepaspectratio]{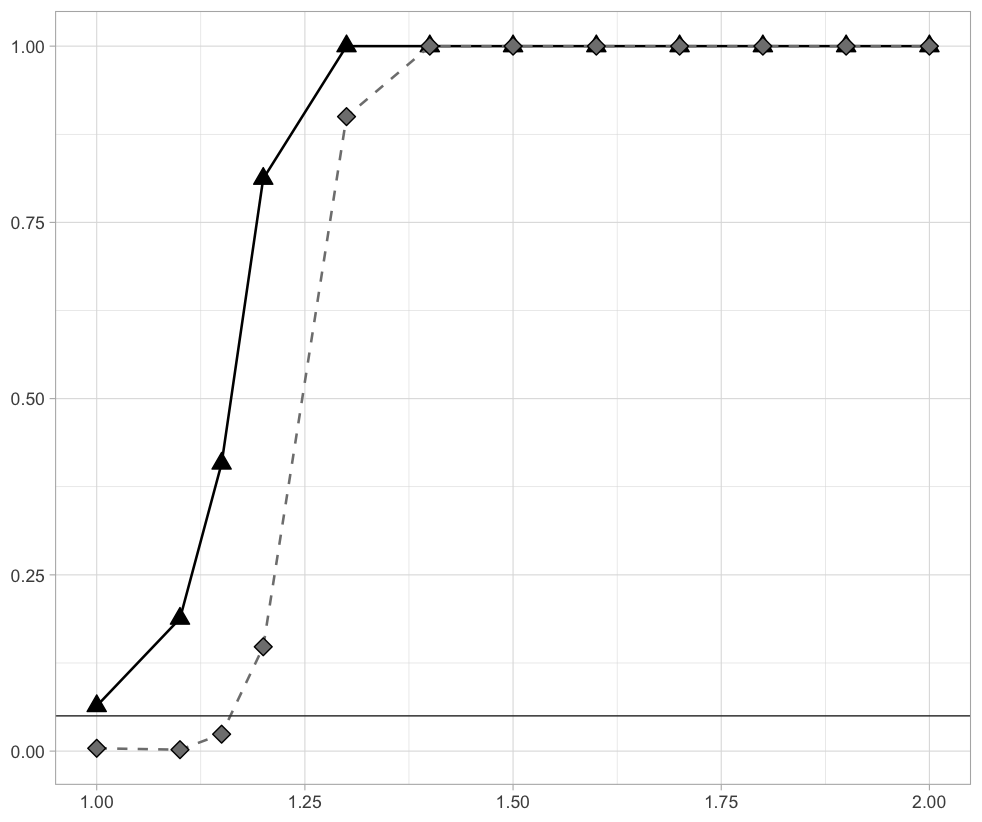} 
             \caption{  Empirical rejection rates of the new test \eqref{test} (triangle)  compared to the test of \cite{ryankillick2023} (diamond), where 
             $ t_0 =0.2, ~t^\star=0.5$ 
             and 
             $(n,p)=(600,50)$ (first row),
             $(n,p)=(600,80)$ (middle row),
             $(n,p)=(800,100)$ (third row).
             Left panels: model \eqref{model1}), where $x_{11}\sim {\cal N} (0,1)$. Right panels:  model \eqref{model2}, where 
             $x_{11}\sim \mathcal{U}(0,1)$.           }
             \label{fig3}
\end{figure}

Next, we investigate the performance of our method in the case where the change only affects the eigenvectors, but not the eigenvalues of $\bfSigma_1$ and $\bfSigma_n$. We consider the covariance matrices 
\begin{align}
    \label{model3}
    \bfSigma_1 = \mathbf{Q}_1 \operatorname{diag}( \underbrace{ 2, \ldots, 2}_{p/2}, \underbrace{1, \ldots, 1}_{p/2})  \mathbf{Q}_{1}^\top, \quad  
    \bfSigma_n = \mathbf{Q}_2 \operatorname{diag}( \underbrace{ 2, \ldots, 2}_{p/2}, \underbrace{1, \ldots, 1}_{p/2})  \mathbf{Q}_{2}^\top ,
\end{align}
where  $\mathbf{Q}_1, \mathbf{Q}_2$ are independent random matrices uniformly distributed on the orthogonal group. 
Note that this scenario corresponds to the alternative $H_1$  with overwhelming probability. Moreover, $\bfSigma_1$ and $\bfSigma_n$ share the same eigenvalues, so the spectrum does not change. 
Under $H_0,$ we set $\bfSigma_1 =\bfSigma_n$,  and $\bfSigma_1$ is generated as above in \eqref{model3}.
In Table \ref{table_violation_A4}, we display  the empirical rejection rates of our proposed test.  We observe that the proposed test attains full power against such alternatives.

 \begin{table}[h!]
     \centering
     \begin{tabular}{cccc}
      \multicolumn{2}{c}{$(n,p)$}   & (600,80) & (800,100) \\
      \hline 
        \multirow{2}{*}{$x_{11}\sim \mathcal{U}(0,1)$}  & $H_0$ & 0.058 & 0.066 \\
          & $H_1$ & \textbf{1.000} & \textbf{1.000} \\
          \hline 
          \multirow{2}{*}{$x_{11}\sim \mathcal{N}(0,1)$}  & $H_0$ & 0.068 & 0.062 \\
          & $H_1$ & \textbf{1.000} & \textbf{1.000} \\
     \end{tabular}
     \caption{Empirical rejection rates of the new test \eqref{test} for different values of $(n,p)$ and distributions for $x_{11}$, where 
             $ t_0 =0.2, ~t^\star=0.5$ }
     \label{table_violation_A4}
 \end{table}

\begin{table}[h]
    \centering
    \begin{tabular}{cccccccccccc}
         $\eta$ & &  1.1  & 1.2 & 1.3 & 1.4 & 1.5 & 1.6 & 1.7 & 1.8 & 1.9 & 2 \\
         \hline 
        \hline 
         \multirow{3}{*}{$\hat\tau^\star$}  & mean  & 0.499 &  0.499 &  0.506 & 0.493 & 0.496 & 0.494 & 0.497 & 0.496 & 0.499 & 0.499  
 \\ 
        & sd   &  0.123 & 0.112 & 0.091 & 0.066 & 0.054 & 0.034 & 0.025 & 0.018 & 0.010 & 0.008 
 \\ 
       & MSE &  0.015  & 0.012 & 0.008 & 0.004 & 0.003 & 0.001 & 0.001 & 0.000 &  0.000 & 0.000  \\ 
       \\
         \multirow{3}{*}{RK}  & mean & 0.529 &  0.497 & 0.425 & 0.422 &  0.436 &  0.462 &  0.473 & 0.479 & 0.487 & 0.491    \\
         & sd &  0.203 & 0.198 & 0.157 & 0.105 & 0.073 & 0.048 & 0.042 & 0.033 & 0.023 & 0.016  \\
         & MSE &  0.042 & 0.039 & 0.030 & 0.017 & 0.009 & 0.004 & 0.002 & 0.001 & 0.001 & 0.000 \\
         \hline 
         \hline 
         \multirow{3}{*}{$\hat\tau^\star$}  & mean  &   0.507 & 0.499 & 0.497 & 0.496 & 0.498 & 0.496 & 0.497 & 0.498 & 0.497 & 0.498 \\ 
        & sd  &  0.129 & 0.111 & 0.097 & 0.073 & 0.055 & 0.039 & 0.024 & 0.016 & 0.011 & 0.008   \\ 
       & MSE & 0.017 & 0.012 & 0.009 & 0.005 & 0.003 & 0.002 & 0.001 & 0.000 & 0.000 & 0.000
       \\  
       \\
         \multirow{3}{*}{RK}  & mean & 0.515 & 0.501 & 0.420 & 0.395 & 0.413 & 0.431 & 0.450 & 0.460 &  0.468 & 0.474  \\
         & sd & 0.211 &  0.210 & 0.161 & 0.096 & 0.074 & 0.058 & 0.044 & 0.040 & 0.033 & 0.026   \\
         & MSE & 0.045 &  0.044 & 0.032 & 0.020 & 0.013 & 0.008 & 0.004 &  0.003 & 0.002 & 0.001  \\
         \hline 
         \hline 
         \multirow{3}{*}{$\hat\tau^\star$}  & mean  &  0.495 &  0.495 & 0.494 & 0.494 & 0.496 & 0.498 & 0.499 & 0.499 & 0.499 & 0.500 \\ 
        & sd  &  0.124 & 0.106 & 0.082 & 0.055 & 0.038 & 0.023 & 0.013 & 0.010 & 0.006 & 0.005  \\ 
       & MSE & 0.015 & 0.011 & 0.007 & 0.003 & 0.001 & 0.001 & 0.000 & 0.000 & 0.000 & 0.000
       \\  
       \\
         \multirow{3}{*}{RK}  & mean & 0.551 & 0.486 & 0.414  & 0.404 & 0.429 & 0.451 & 0.464 & 0.471 & 0.476 & 0.483  \\
         & sd & 0.204 &  0.199 & 0.136 & 0.077 & 0.057 & 0.038 & 0.033 & 0.027 & 0.024 & 0.019 \\
         & MSE & 0.044  & 0.040 & 0.026 & 0.015 & 0.008 & 0.004 & 0.002 & 0.002 & 0.001 & 0.001
    \end{tabular}
    \caption{Simulated mean, standard deviation and mean squared error of the estimator  $\hat\tau^\star$ in \eqref{def_t_hat_star_cen} and the estimator 
   proposed in \cite{ryankillick2023} (RK), where $ t_0 =0.2, ~t^\star=0.5$. The model is given by  \eqref{model1} with independent
   $\mathcal{N}(0,1)$-distributed entries in the matrix $\bfX$ and $(n,p)=(600,50)$ (top), $(n,p)=(600,80)$ (middle)   and 
    $(n,p)=(800,100)$ (bottom) } 
    \label{table5}
\end{table}

 \begin{table}[h]
    \centering
    \begin{tabular}{cccccccccccc}
         $\eta$ & &  1.1  & 1.2 & 1.3 & 1.4 & 1.5 & 1.6 & 1.7 & 1.8 & 1.9 & 2 \\
         \hline 
        \hline 
         \multirow{3}{*}{$\hat\tau^\star$}   & mean  & 0.507 & 0.499 & 0.498 & 0.496 & 0.498 & 0.500 & 0.499 & 0.500 & 0.500 & 0.500
 \\ 
        & sd   & 0.123 & 0.087 & 0.055 & 0.033 & 0.009 & 0.006 & 0.004 & 0.003 & 0.003 & 0.001
 \\ 
       & MSE & 0.015 &  0.008 & 0.003 & 0.001 & 0.000 & 0.000 & 0.000 & 0.000  & 0.000 & 0.000  \\ 
       \\
         \multirow{3}{*}{RK}  & mean & 0.548  & 0.518 & 0.454 & 0.473 & 0.490 & 0.495 & 0.497 & 0.498 & 0.498 & 0.498    \\
         & sd &  0.195 &  0.193 & 0.106 & 0.048 & 0.021 & 0.012 & 0.007 & 0.006 & 0.006 & 0.007  \\
         & MSE &  0.040 &  0.037 & 0.013 & 0.003 & 0.001 & 0.000 & 0.000 & 0.000 & 0.000 & 0.000 \\
         \hline
         \hline 
         \multirow{3}{*}{$\hat\tau^\star$}  & mean  & 0.496 & 0.501 & 0.497 & 0.498 & 0.497 & 0.499 & 0.500 & 0.500 & 0.500 & 0.500  \\ 
        & sd  &  0.118 & 0.091 & 0.056 & 0.033 & 0.018 & 0.008 & 0.004 & 0.004 & 0.002 & 0.001 \\ 
       & MSE &  0.014 & 0.008 & 0.003 & 0.001 & 0.000  & 0.000 & 0.000 & 0.000 & 0.000 & 0.000
       \\  
       \\
         \multirow{3}{*}{RK}  & mean &  0.548 & 0.505 & 0.428 & 0.458 & 0.481 & 0.490  & 0.495 & 0.496 & 0.497 & 0.498\\
         & sd & 0.198 & 0.199 & 0.109 & 0.052 & 0.032 & 0.019 &0.010 &0.008 & 0.008 & 0.005  \\
         & MSE &  0.041 &  0.039 & 0.017 & 0.004 & 0.001 & 0.000 & 0.000 & 0.000 & 0.000 & 0.000 \\
         \hline 
         \hline 
         \multirow{3}{*}{$\hat\tau^\star$}  & mean  &  0.495 & 0.501 & 0.499 & 0.501 &  0.499 & 0.499 & 0.500 & 0.500 & 0.500 & 0.500 \\ 
        & sd  &  0.106 &  0.075 & 0.036 & 0.015 & 0.007  & 0.006 & 0.001 & 0.002 & 0.001 & 0.001  \\ 
       & MSE &  0.011 & 0.006 & 0.001 & 0.000 & 0.000 & 0.000 & 0.000 & 0.000 & 0.000 & 0.000
       \\  
       \\
         \multirow{3}{*}{RK}  & mean & 0.590 & 0.475 & 0.443 & 0.474 & 0.492 & 0.496 & 0.498 & 0.498 & 0.499 & 0.499 \\
         & sd  &   0.187 & 0.176 & 0.069 & 0.038 & 0.016 & 0.009 & 0.006 & 0.004 & 0.003 & 0.003 \\
         & MSE & 0.043 & 0.032 & 0.008 & 0.002 & 0.000 & 0.000 & 0.000 &0.000 & 0.000 & 0.000
    \end{tabular}
    \caption{
    Simulated mean, standard deviation and mean squared error of the estimator  $\hat\tau^\star$ in \eqref{def_t_hat_star_cen} and the estimator 
   proposed in \cite{ryankillick2023} (RK), where $ t_0 =0.2, ~t^\star=0.5$. The model is given by  \eqref{model2} with
   independent 
   $\mathcal{U}(0,1)$-distributed entries in the matrix $\bfX$ and $(n,p)=(600,50)$ (top), $(n,p)=(600,80)$ (middle)   and 
    $(n,p)=(800,100)$ (bottom).} 
    \label{table6}
\end{table}

\begin{table}
    \centering
    \begin{tabular}{cccccccccccc}
         $\eta$ & &  1.1  & 1.2 & 1.3 & 1.4 & 1.5 & 1.6 & 1.7 & 1.8 & 1.9 & 2 \\ \hline \hline 
         \multirow{3}{*}{$\hat\tau^\star$}  & mean  & 0.497 & 0.491 & 0.475 & 0.452 & 0.446 & 0.428 & 0.421 & 0.415 & 0.415 & 0.407
 \\ 
        & sd   &  0.167 & 0.152 & 0.150 & 0.133 & 0.124 & 0.111 & 0.081 & 0.080 & 0.066 & 0.048
 \\ 
       & MSE &  0.037 & 0.031 & 0.028 & 0.020 & 0.017 & 0.013 & 0.007 & 0.007 & 0.005 & 0.002
       \\  
       \\
         \multirow{3}{*}{RK}  & mean & 0.475 & 0.455 & 0.454 & 0.417 & 0.401 & 0.359 & 0.358 & 0.367 & 0.345 & 0.381
  \\
         & sd  &  0.287 & 0.278 & 0.272 & 0.247 & 0.231 & 0.201 & 0.176 & 0.155 &  0.128 & 0.114
  \\
         & MSE &  0.088 & 0. 080 & 0.077 & 0.061 & 0.053 & 0.042 & 0.033 & 0.025 & 0.020 & 0.015
 
         \\
         \\
             \multirow{3}{*}{AHHR}  & mean &  0.512 & 0.516 & 0.507 & 0.512 & 0.506 & 0.495 & 0.486 & 0.486 & 0.482 & 0.478
  \\
         & sd & 0.087 & 0.088 & 0.085 & 0.083 & 0.079 & 0.076 & 0.071 & 0.071 & 0.073 & 0.068
 \\
         & MSE  & 0.020 & 0.021  &0.019 & 0.019 & 0.017 & 0.015 & 0.012 & 0.012 & 0.012 & 0.011 \\ 
         \\
         \hline
         \hline  \multirow{3}{*}{$\hat\tau^\star$}  & mean  &  0.507 & 0.482 &  0.468 & 0.452 & 0.447  & 0.432 &  0.420 &  0.419  & 0.414 & 0.410
 \\ 
        & sd   &  0.158 & 0.159 & 0.147 & 0.141 & 0.132 & 0.109 & 0.090 & 0.082 & 0.068 & 0.058
 \\ 
       & MSE &  0.036 & 0.032 & 0.026 & 0.022 & 0.020 & 0.013 & 0.009 & 0.007 & 0.005 & 0.003
       \\  
       \\
         \multirow{3}{*}{RK}  & mean & 0.483 & 0.463 & 0.451 & 0.409 & 0.407  & 0.379 & 0.365 & 0.358 & 0.354  & 0.355
  \\
         & sd  &  0.294 &  0.301 & 0.293 & 0.275 & 0.255 & 0.237 & 0.211 & 0.196 &  0.183 & 0.165
  \\
         & MSE &   0.093 & 0.095 & 0.088 & 0.076  & 0.065 & 0.057  & 0.046  & 0.040 & 0.036 & 0.029
         \\
         \\
             \multirow{3}{*}{AHHR}  & mean &  0.505 & 0.507 & 0.507 & 0.503 & 0.502  & 0.499 & 0.504  & 0.495 & 0.499 & 0.491
  \\
         & sd & 0.066 &  0.061 & 0.060 & 0.056 & 0.060 & 0.058 & 0.056 & 0.057 & 0.052 & 0.058
 \\
         & MSE  & 0.015 &  0.015 & 0.015 & 0.014 & 0.014 & 0.013 & 0.014 & 0.012 & 0.012 & 0.012
    \end{tabular}
    \caption{Estimated change point location given by $\hat\tau^\star$ compared to \cite{ryankillick2023} (RK) and \cite{Aue2009b} (AHHR) under model \eqref{model1} based on 500 simulation runs in the setting $(n,p)=(200,10)$ (top), $(n,p)=(200,15)$ (bottom), $ t_0 =0.1, ~t^\star=0.4, x_{11}\sim\mathcal{N}(0,1)$. } 
    \label{table7}
\end{table}

\subsection{Numerical experiments for the change-point estimation} \label{sec_cp_est}

In this section, we compare the new change point estimator $\hat\tau^\star$  in \eqref{def_t_hat_star_cen} with  the estimators proposed by \cite{Aue2009b} (AHHR) and \cite{ryankillick2023} (RK). 
All results are again  based on $500$ simulation runs. 

In Table \ref{table5}, we compare  the mean, standard deviation and mean squared error of the new estimator $\hat\tau^\star$ in \eqref{def_t_hat_star_cen}
with the RK estimator for the  different alternatives in model \eqref{model1} (with $\mathcal{N}(0,1)$-distributed independent entries in the matrix $\bfX$), where $t^\star = 0.5$, $(n,p)=(600,50)$ (top), $(n,p)=(600,80)$ (middle)   and $(n,p)=(800,100)$ (bottom). 
Note that  the dimension is of comparable magnitude to the sample size, and therefore, the AHHR estimator cannot be computed and is therefore not included in the comparison. For example, for a dimension $p=50$, one requires at least a sample size of $(p+1)p/2 +1 = 1276$ to calculate this estimator  (some results for the AHHR estimator can be found in   Table \ref{table7}). We observe from the upper part of Table \ref{table5} that the  new estimator \eqref{def_t_hat_star_cen}
outperforms  the RK estimator in all three cases under consideration. The smaller mean squared error of the new estimator \eqref{def_t_hat_star_cen} is caused by both a smaller bias and variance.  In particular, the RK estimator admits a significant bias for moderately strong signals $\eta \approx 1.5.$

In Table \ref{table6}, we display the results of the two estimators for model \eqref{model2} with uniformly distributed data.  The results are similar to those  presented in Table \ref{table5} for model \eqref{model1}.  Again, our method outperforms the alternative RK approach in terms of smaller mean squared error.

 We conclude this section with a small comparison of the two estimators $\hat\tau^\star$ and RK  with the estimator  proposed by \cite{Aue2009b} (AHHR) in the model \eqref{model1}. For this purpose, we select $t^\star = 0.4$, and display the characteristics of the three change point estimators in Table \ref{table7}. Note that the AHHR estimator can only be computed if the sample size is at least $p(p+1)/2+1$ and for this reason, we consider the cases $(n,p) = (200,10)$ (top), $(n,p)= (200,15)$ (bottom). As the dimension is relatively small compared to the sample size, we choose $t_0 = 0.1$.  We observe that, even in such cases, the estimator AHHR admits a significant bias resulting in a larger MSE compared to the other two methods. Interestingly, the bias of RK increases as the signal strength $\eta$ increases from moderately to large values. In contrast, the new estimator $\hat\tau^\star$ has decreasing bias and standard deviation as $\eta$ increases. Moreover, the new estimator always outperforms RK indicated by a smaller mean squared error, and AHHR in the case $(n,p)=(200,15).$  For $(n,p)=(200,10),$ we observe that the mean squared error of AHHR is smaller for weak signal strength $\eta.$ However, even for large $\eta$, this method admits a significant bias and is therefore outperformed by our method.

\newpage 

\section{Proofs of main results under the null hypothesis}
\label{seca}

\subsection{Proof of Theorem \ref{thm_main_cen}} \label{seca1}

Throughout this section, we may assume $\E [x_{11}]=0$ by definition of $\Lambda_{n,t}^{\textnormal{cen}}$ without loss of generality. 
The first step in the proof of Theorem \ref{thm_main_cen} consists of reducing it to a corresponding statement for the non-centered sample covariance matrix. For this purpose, we proceed with some preparations and define the non-centered sequential sample covariance matrices  as
	\begin{align*}
		\bfSigmahat_{i:j}^{(n)} & = \bfSigmahat_{i:j}
   = \frac{1}{j - i + 1} \sum\limits_{k=i}^j \bfy_j \bfy_j^\top, \quad 1 \leq i \leq j \leq n, \\
  \bfSigmahat & = \bfSigmahat_{1:n}. 
	\end{align*}	 
	Consider the sequential likelihood ratio statistics
	\begin{align} \label{def_statistic_2}
		\Lambda_{n,t} = \frac{ \big| \bfSigmahat_{1:\lfloor nt \rfloor } \big|^{\frac{1}{2} \nt{} }  \big| \bfSigmahat_{(\nt{} +1):n } \big|^{\frac{1}{2} (n -\nt{}) } }{\big|\bfSigmahat\big|^{\frac{1}{2}n }}, \quad t\in (0,1). 
	\end{align}
and the corresponding centered process
$$
\boldsymbol{\Lambda}_n = ( (2 \log \Lambda_{n,t} - \mu_{n,t})/n)_{t\in [t_0,1-t_0]} ,
$$
where the centering term is defined as
    \begin{align*}
        \mu_{n,t} & = n \lb n - p - \frac{1}{2} \rb \log \Big( 1 - \frac{p}{n} \Big) 
        - \nt{} \lb \nt{} - p - \frac{1}{2} \rb \log \Big( 1- \frac{p}{\nt{}} \Big) \\ & \quad 
        - (n - \nt{} ) \lb n - \nt{} - p - \frac{1}{2} \rb \log \lb 1- \frac{p}{n - \nt{}} \rb +  \frac{ (\hat\kappa_n - 3) p}{2},
        \quad t\in [t_0, 1-t_0]. 
    \end{align*}
    In the following theorem, we provide the convergence of the finite-dimensional distributions of $(\boldsymbol{\Lambda}_n)_{n\in\N}.$
\begin{theorem} \label{thm_fidis}
Suppose that assumptions \ref{ass_mp_regime}, \ref{ass_mom} for some $\delta >0$, and \ref{ass_sigma_null} are satisfied, and that $\E[ x_{11}] =0$. 
    For $n\to\infty$ and all fixed $k\in \N$, $t_1, \ldots, t_k \in [t_0,1-t_0]$, we have under $H_0$
    \begin{align*}
        \Big( \frac{2\log \Lambda_{n,t_i} - \mu_{n,t_i} }{n}\Big)_{1 \leq i \leq k} \cond (Z(t_i))_{1 \leq i \leq k},
    \end{align*}
    where $(Z(t))_{t\in [t_0,1-t_0]}$ denotes the Gaussian process defined in Theorem \ref{thm_main_cen}. 
\end{theorem}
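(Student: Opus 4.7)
The plan is to apply the Cram\'er--Wold device and reduce the problem to a one-dimensional martingale CLT. Fix $k \in \N$, time points $t_1 < \ldots < t_k$ in $[t_0, 1-t_0]$, and real coefficients $a_1, \ldots, a_k$; it suffices to show that
\begin{align*}
T_n := \sum_{i=1}^k a_i \,\frac{2\log\Lambda_{n,t_i} - \mu_{n,t_i}}{n} \cond \mathcal{N}\Big(0, \sum_{i,j=1}^k a_i a_j \,\sigma(t_i, t_j)\Big).
\end{align*}
Two preliminary reductions are useful. First, writing $\bfy_\ell = \bfSigma^{1/2} \bfx_\ell$ under $H_0$, one has $\bfSigmahat_{i:j} = \bfSigma^{1/2}\hat\bfS_{i:j}\bfSigma^{1/2}$ where $\hat\bfS_{i:j} := (j-i+1)^{-1}\sum_{\ell=i}^j \bfx_\ell\bfx_\ell^\top$, and the coefficients of $\log|\bfSigma|$ inside $2\log\Lambda_{n,t}$ sum to $\lfloor nt\rfloor + (n-\lfloor nt\rfloor) - n = 0$. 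Hence $\bfSigma$ cancels and one may assume $\bfSigma=\bfI$. Second, letting $m_t := \lfloor nt \rfloor$,
\begin{align*}
2\log\Lambda_{n,t} = m_t\log|\hat\bfS_{1:m_t}| + (n-m_t)\log|\hat\bfS_{(m_t+1):n}| - n\log|\hat\bfS|,
\end{align*}
with $n\hat\bfS = m_t\hat\bfS_{1:m_t} + (n-m_t)\hat\bfS_{(m_t+1):n}$, so $T_n$ is a signed combination of linear spectral statistics (LSS) of three families of sample covariances.

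I would then build a joint martingale difference decomposition with respect to $\mathcal{F}_\ell := \sigma(\bfx_1, \ldots, \bfx_\ell)$, with $\E_\ell := \E[\cdot\mid\mathcal{F}_\ell]$. For each block $A\subset\{1,\ldots,n\}$ appearing in $T_n$, the telescoping identity
\begin{align*}
\log|\hat\bfS_A| - \E\log|\hat\bfS_A| = \sum_{\ell \in A} (\E_\ell - \E_{\ell-1})\log|\hat\bfS_A|,
\end{align*}
combined with the Sherman--Morrison rank-one identity, rewrites each increment as $(\E_\ell - \E_{\ell-1})\log(1 + |A|^{-1}\bfx_\ell^\top(\hat\bfS_A^{(-\ell)})^{-1}\bfx_\ell)$; this is the standard Bai--Silverstein building block for LSS CLTs. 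Inserting this expansion into the three log-determinants appearing in $T_n$ yields
\begin{align*}
T_n = \sum_{\ell=1}^n \Delta_\ell + \op,
\end{align*}
where each $\Delta_\ell$ is an $\mathcal{F}_\ell$-measurable martingale difference whose precise form depends on which subinterval $(m_{t_{j-1}}, m_{t_j}]$ contains $\ell$ (different leave-one-out resolvents enter in different regions).

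With this representation in hand I would invoke the standard martingale CLT. The Lyapunov condition $\sum_\ell \E\Delta_\ell^4 \to 0$ follows from moment bounds on quadratic forms $\bfx_\ell^\top \bfA\bfx_\ell$ against bounded deterministic $\bfA$ under $\E|x_{11}|^{4+\delta}<\infty$, and from uniform operator-norm control of the resolvents afforded by Assumption \ref{ass_sigma_null}. The conditional-variance condition
\begin{align*}
\sum_{\ell=1}^n \E_{\ell-1}[\Delta_\ell^2] \conp \sum_{i,j=1}^k a_ia_j\sigma(t_i, t_j)
\end{align*}
is verified subinterval by subinterval: for $\ell$ in each region, identify the leave-one-out resolvents of $\hat\bfS_{1:m_{t_i}}$, $\hat\bfS_{(m_{t_i}+1):n}$ and $\hat\bfS$, compute their asymptotic Stieltjes transforms via the Mar\v cenko--Pastur law, and evaluate the resulting integrals. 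The four additive pieces of $\sigma(t_1,t_2)$ in \eqref{det102} are recovered from, respectively, the joint contributions of the subintervals $[1,m_{t_1}]$, $(m_{t_1},m_{t_2}]$, $(m_{t_2},n]$ and the shared centering $-n\log|\hat\bfS|$. Finally, the centering $\mu_{n,t}$ absorbs $\E\log|\hat\bfS_A|$ for each $A$ via the classical Jiang--Yang expansion, including the $(\hat\kappa_n-3)p/2$ non-Gaussian correction; consistency of $\hat\kappa_n$ (Lemma \ref{lem_consistency_kappa}) then ensures this is controlled up to $\op$.

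The main obstacle is the cross-term ($i\neq j$) in the conditional-variance calculation. For $\ell$ in, say, the middle block $(m_{t_1}, m_{t_2}]$, the increment for $t_1$ is driven by the resolvents of $\hat\bfS_{(m_{t_1}+1):n}$ and $\hat\bfS$, while the increment for $t_2$ is driven by the resolvents of $\hat\bfS_{1:m_{t_2}}$ and $\hat\bfS$; these resolvents operate at different effective dimension-to-sample ratios yet are strongly correlated through the shared pooled observations. Disentangling this correlation amounts to computing conditional expectations of products of quadratic forms whose matrices are derived from distinct but overlapping data blocks, and then matching the arising double integrals of paired Stieltjes transforms with the closed-form expression \eqref{det102}. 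This region-by-region bookkeeping, together with uniform control of the Bai--Silverstein linearization errors across subintervals, is the delicate part; tightness of the full process is handled separately, as this theorem treats only the finite-dimensional distributions.
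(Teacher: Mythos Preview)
Your high-level strategy (Cram\'er--Wold, martingale CLT, invariance under $\bfSigma$) matches the paper, but the decomposition you propose is genuinely different. You build a martingale with respect to the \emph{column} filtration $\mathcal{F}_\ell = \sigma(\bfx_1,\ldots,\bfx_\ell)$, $\ell=1,\ldots,n$, and linearize each $\log|\hat\bfS_A|$ via leave-one-out Sherman--Morrison in the Bai--Silverstein style. The paper instead uses a \emph{row} filtration $\mathcal{A}_i = \sigma(\bfb_1,\ldots,\bfb_i)$, $i=1,\ldots,p$, where $\bfb_i \in \R^n$ is the $i$-th row of the data matrix. It applies a QR decomposition to obtain $\log|n\hat\bfI| = \sum_{i=1}^p \log\big(\bfb_i^\top \bfP(i-1)\bfb_i\big)$ and analogously for each sub-block, then Taylor-expands $\log(1+X_i)$ around quadratic-form increments $X_i$. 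The resulting martingale differences $D_i$ are indexed by coordinates, not observations, and the conditional covariances $\E[D_{i,1}D_{i,2}\mid\mathcal{A}_{i-1}]$ reduce to traces of products of projection matrices $\bfP^{j_2:k_2}(i-1;j_1:k_1)$ onto overlapping coordinate spans; these are summed explicitly via harmonic-series expansions and an auxiliary lemma computing $\sigma_1^2(\nt{1}+1,n,1,\nt{2})$ through a direct trace calculation for $\tr\big(\bfS_{i,(\nt{1}+1):n}^{-1}\bfS_{i,1:\nt{2}}^{-1}\big)$.

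What each approach buys: the QR/row-wise route handles the log-determinant directly without confronting the singularity of $\log$ at $0$, and yields algebraically explicit covariance formulas that match the closed form \eqref{det102} term by term. Your column-wise route is the more standard LSS machinery, but for $f=\log$ it requires extra care (truncation or contour arguments) to justify the resolvent linearization, and the cross-block resolvent correlations you flag as ``the delicate part'' would need a separate asymptotic analysis comparable in effort to the paper's Lemma \ref{lem_cov_term}. Your plan is plausible but leaves precisely that computation unspecified; the paper's choice of filtration is what makes it tractable.
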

The asymptotic tightness of $(\boldsymbol{\Lambda}_n)_{n\in\N}$ is given in the next theorem. 
\begin{theorem} \label{thm_tight}
Suppose that Assumptions \ref{ass_mp_regime}, \ref{ass_mom} with $\delta>4$ and \ref{ass_sigma_null} are satisfied, and that $\E[x_{11}]=0$. 
     Then, the sequence $(\boldsymbol{\Lambda}_n)_{n\in\N}$ is asymptotically tight in the space $\ell^\infty ( [t_0, 1-t_0]).$
\end{theorem}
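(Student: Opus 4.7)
The plan is to establish asymptotic uniform equicontinuity in probability of $(\mathbf{\Lambda}_n)_{n\in\N}$ by proving a uniform fourth-moment bound on the increments: there should exist a constant $C<\infty$, independent of $n$, such that
\[
\E\bigl|\mathbf{\Lambda}_n(t)-\mathbf{\Lambda}_n(s)\bigr|^{4} \le C\,(t-s)^{2}, \qquad t_0 \le s \le t \le 1-t_0.
\]
By a standard chaining criterion (e.g.\ Theorem~2.2.4 in van der Vaart and Wellner), this bound yields asymptotic uniform equicontinuity in probability, and, combined with the marginal tightness implied by Theorem~\ref{thm_fidis}, Theorem~1.5.7 of the same reference then delivers asymptotic tightness in $\ell^\infty([t_0,1-t_0])$. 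The exponent $q=4$ with $\beta=1$ is the natural target in view of the Gaussian fluctuations of the limit process.

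The strategy for the moment bound exploits the telescoping structure of $2\log\Lambda_{n,t}-2\log\Lambda_{n,s}$ as the partition point moves from $k=\lfloor ns\rfloor$ to $\ell=\lfloor nt\rfloor$. Writing $L_{n,j}=j\log|\bfSigmahat_{1:j}|+(n-j)\log|\bfSigmahat_{(j+1):n}|$, the difference equals $\sum_{j=k}^{\ell-1}(L_{n,j+1}-L_{n,j})$, and each unit step reassigns $\bfy_{j+1}$ from the right to the left sub-sample, inducing rank-one updates of $\bfSigmahat_{1:j+1}$ and $\bfSigmahat_{(j+1):n}$. By the matrix determinant lemma, $L_{n,j+1}-L_{n,j}$ reduces to a smooth function of the scalar quadratic forms $\bfy_{j+1}^\top\bfSigmahat_{1:j}^{-1}\bfy_{j+1}$ and $\bfy_{j+1}^\top\bfSigmahat_{(j+2):n}^{-1}\bfy_{j+1}$---both of which feature $\bfy_{j+1}$ independent of the corresponding covariance matrix---together with deterministic log terms absorbed into the centering $\mu_{n,t}-\mu_{n,s}$ by construction. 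Applying a Doob-type martingale decomposition to the stochastic remainder and invoking Burkholder's inequality reduces control of the fourth moment of the sum to a Cauchy--Schwarz bound on $\sum_{j}\E\Delta_j^4$; together with the $O(n(t-s))$ cardinality of the summation and the $1/n$ normalization in the definition of $\mathbf{\Lambda}_n$, this produces the target $(t-s)^2$ estimate.

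The main obstacle will be securing uniform fourth-moment bounds on the individual martingale differences $\Delta_j$ over all $j\in[nt_0,n(1-t_0)]$. This requires sharp concentration inequalities for the quadratic forms $\bfy_{j+1}^\top\bfSigmahat_{1:j}^{-1}\bfy_{j+1}$ and their right-sub-sample analogues, and it is precisely here that the strengthened moment assumption $\E|x_{11}|^{4+\delta}<\infty$ with $\delta>4$ intervenes, since controlling the fourth moment of quadratic forms through Burkholder-type estimates demands strictly more than four moments of the underlying entries. An equally critical ingredient is a uniform operator-norm bound on the resolvents $\bfSigmahat_{1:j}^{-1}$ and $\bfSigmahat_{(j+1):n}^{-1}$: Assumption~\ref{ass_mp_regime}, via the condition $y<t_0\wedge(1-t_0)$, keeps the aspect ratios $p/j$ and $p/(n-j)$ uniformly bounded away from $1$, so that Bai--Silverstein-type lower bounds on the smallest eigenvalue of partial sample covariance matrices furnish the required uniform control on $\|\bfSigmahat_{1:j}^{-1}\|$ and $\|\bfSigmahat_{(j+1):n}^{-1}\|$ with overwhelming probability. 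Once both ingredients are assembled, the telescoping decomposition combined with Burkholder's inequality yields the desired increment bound, and the standard chaining argument completes the proof of asymptotic tightness.
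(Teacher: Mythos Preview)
Your proposal takes a genuinely different route from the paper. The paper never telescopes over the observation index $j$; instead it carries over the QR-decomposition representation from the proof of Theorem~\ref{thm_fidis}, writing $\log|\bfSigmahat_{1:\lfloor nt\rfloor}|=\sum_{i=1}^p\log\bigl(\bfb_{i,1:\lfloor nt\rfloor}^\top\bfP(i-1;1{:}\lfloor nt\rfloor)\bfb_{i,1:\lfloor nt\rfloor}\bigr)$ and expanding each logarithm as $X_{i,\cdot}-\tfrac12 X_{i,\cdot}^2+Y_{i,\cdot}$. The increment $\mathbf{\Lambda}_n(t_1)-\mathbf{\Lambda}_n(t_2)$ is then split as $Z_1+Z_2$ plus quadratic and remainder pieces $Q_{2,n,t},R_n(t)$ (Lemmas~\ref{lem_mom_ineq_X}--\ref{lem_mom_ineq_quad_term_tight}); the pieces satisfy \emph{heterogeneous} moment bounds ($\E Z_1^2\lesssim|\cdot|^{1+d}$, $\E|Z_2|^{2+\delta/2}\lesssim|\cdot|^{1+d}$, $\sup_t\E|Q_{2,n,t}|^{2+\delta/4}\lesssim n^{-1-d}$), and the paper combines them via the Billingsley-type three-point criterion $\PR(m(r,s,t)>\lambda)\lesssim(t-r)^{1+d}$ together with a chaining lemma from Tomecki, rather than a single Kolmogorov--Chentsov bound. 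The advantage of the paper's row-wise decomposition is that the martingale structure over $i$ (filtration $\mathcal{A}_i=\sigma(\bfb_1,\dots,\bfb_i)$) holds \emph{simultaneously} for all partition points $t$, so no observation-indexed filtration is needed.

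Your column-wise telescoping is natural, but the step ``Doob-type martingale decomposition'' hides a real difficulty. The one-step increment $L_{n,j+1}-L_{n,j}$ depends on $\bfy_{j+1}$ through quadratic forms against \emph{both} $\bfSigmahat_{1:j}^{-1}$ (a function of $\bfy_1,\dots,\bfy_j$) and $\bfSigmahat_{(j+2):n}^{-1}$ (a function of $\bfy_{j+2},\dots,\bfy_n$). Relative to the forward filtration $\mathcal{F}_j=\sigma(\bfy_1,\dots,\bfy_j)$ the second matrix is not adapted, and relative to the backward filtration the first is not; hence the increments $\Delta_j$ are not martingale differences with respect to any single filtration indexed by $j$, and Burkholder's inequality does not apply directly to $\sum_{j=k}^{\ell-1}\Delta_j$. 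You would need either a two-sided martingale argument, a decoupling device, or an Efron--Stein/leave-one-out bound in place of Burkholder; as written, the proposal does not close this gap. A secondary point: because $\mathbf{\Lambda}_n$ is piecewise constant in $t$ with jumps at $k/n$, a direct bound $\E|\mathbf{\Lambda}_n(t)-\mathbf{\Lambda}_n(s)|^4\le C(t-s)^2$ fails for $|t-s|<1/n$ around a jump; the paper handles this via the $m(r,s,t)$ minimum, which vanishes when $t-r<1/n$.
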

Proofs of these statements can be found in  Section \ref{sec_proof_fidis} and   \ref{sec_proof_tight}, respectively.
Then, the weak convergence of $(\boldsymbol{\Lambda}_n)_{n\in\N}$ towards a Gaussian process follows from the convergence of the finite-dimensional distributions (Theorem \ref{thm_fidis}) and the tightness result (Theorem \ref{thm_tight}). 
 \begin{corollary}
      \label{thm_process_conv}
    Suppose that assumptions \ref{ass_mp_regime},  \ref{ass_mom} with $\delta>4$, \ref{ass_sigma_null} are satisfied, and that $\E[x_{11}]=0$. 
    Then, we have under the null hypothesis $H_0$ of no change point
    \begin{align*}
        \Big( \frac{2 \log \Lambda_{n,t} - \mu_{n,t} }{n}\Big)_{t\in [t_0,1 - t_0]} \cond \big(Z(t)\big)_{t\in [t_0,1 - t_0]} 
        \quad \textnormal{in } \ell^\infty([t_0,1-t_0]),
    \end{align*}
    where $(Z(t))_{t\in [t_0,1 - t_0]} $ denotes the  centered Gaussian process defined in Theorem \ref{thm_main_cen}.
\end{corollary}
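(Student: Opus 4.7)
The plan is to obtain Corollary \ref{thm_process_conv} as an immediate consequence of combining Theorem \ref{thm_fidis} and Theorem \ref{thm_tight} via the standard characterization of weak convergence in $\ell^\infty(T)$ for a compact metric space $T$. Specifically, I will invoke a result of the type of Theorem 1.5.4 in van der Vaart and Wellner (1996), which states that a sequence of maps $X_n$ into $\ell^\infty(T)$ converges weakly to a tight Borel limit if and only if (i) all finite-dimensional marginals $(X_n(t_1),\ldots,X_n(t_k))$ converge weakly in $\R^k$, and (ii) $\{X_n\}$ is asymptotically tight in $\ell^\infty(T)$.

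In our setting, $T=[t_0,1-t_0]$ is compact, the process $\mathbf{\Lambda}_n$ takes values in $\ell^\infty(T)$, condition (i) is furnished by Theorem \ref{thm_fidis} with all limiting marginals being Gaussian, and condition (ii) is furnished by Theorem \ref{thm_tight}. It remains to identify the limit. The projections of any tight Borel weak limit $\mathbf{\Lambda}_\infty$ onto finitely many coordinates $t_1,\ldots,t_k$ must coincide in law with the corresponding marginals of the Gaussian process $(Z(t))_{t\in[t_0,1-t_0]}$ with covariance kernel \eqref{det102} appearing in Theorem \ref{thm_main_cen}. Since a tight Borel law on $\ell^\infty(T)$ is uniquely determined by its finite-dimensional distributions, and since a process whose finite-dimensional distributions are Gaussian with covariance kernel $\sigma(t_1,t_2)$ is itself Gaussian, we may identify $\mathbf{\Lambda}_\infty$ in law with $(Z(t))_{t\in[t_0,1-t_0]}$.

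The continuity of sample paths of $Z$ (needed so that its law lives on $C([t_0,1-t_0])\subset \ell^\infty([t_0,1-t_0])$ and hence is tight) follows from the moment bounds used to establish asymptotic equicontinuity in Theorem \ref{thm_tight}: the same uniform estimates for the increments $2(\log\Lambda_{n,s}-\log\Lambda_{n,t})/n$ pass to the limit to give a modulus-of-continuity control on the covariance function $\sigma(\cdot,\cdot)$ and hence, via the Kolmogorov--Chentsov criterion or equivalently a standard chaining argument for Gaussian processes, a continuous modification of $Z$. Alternatively, continuity may be read off directly from the explicit form of $\sigma(t_1,t_2)$ in \eqref{det102}, which is a smooth function of $(t_1,t_2)$ on $[t_0,1-t_0]^2$.

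Since the serious work has already been carried out in Theorems \ref{thm_fidis} and \ref{thm_tight}, there is no genuine obstacle in this corollary; the only point requiring a line of justification is the identification of the limit process, which is handled by uniqueness of tight Borel measures in $\ell^\infty(T)$ given the finite-dimensional distributions.
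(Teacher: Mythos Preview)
Your proposal is correct and matches the paper's approach exactly: the paper likewise derives Corollary~\ref{thm_process_conv} directly by combining Theorem~\ref{thm_fidis} (finite-dimensional convergence) with Theorem~\ref{thm_tight} (asymptotic tightness) via Theorem~1.5.4 in \cite{vandervaart1996}. Your additional remarks on identifying the limit and path continuity are consistent with this and do not constitute a different argument.
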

Before continuing with the proof of Theorem \ref{thm_main_cen}, we comment on the integration of our theoretical result in the existing line of literature. 
\begin{remark} \label{rema1} ~~~
{\rm 
\begin{enumerate}
    \item[(1)] Theorem \ref{thm_main_cen} and Corollary \ref{thm_process_conv} continue the line of literature on substitution principles in random matrix theory. When considering the spectral statistics of $\bfSigmahat$ and $\bfSigmahat^{(\textnormal{cen})}$, it was found by \cite{zheng_et_al_2015} that their asymptotic distributions are linked by a substitution principle. This results says that one needs to substitute the location parameter $c_n$ in the CLT for the linear spectral statistics of $\bfSigmahat$ by $c_{n-1}$ to account for the centralization in $\bfSigmahat^{\textnormal{cen}}.$
    A similar result has been found by \cite{yin2023central} for the linear eigenvalue statistics of the sample correlation matrix. However, it is important to emphasize that the test statistic $\Lambda_{n,t}^{\textnormal{cen}}$ considered in this work is a functional of several strongly dependent eigenvalue statistics, and therefore these results are not applicable. In fact, the analysis of $\Lambda_{n,t}^{\textnormal{cen}}$ requires a careful study, accounting for its intricate structure.  These challenges will be faced even when restricting our focus to the case of one-dimensional distributions of $(\Lambda_{n,t})_t$, let alone considering the process convergence.
    \item[(2)] For the process convergence of $(\boldsymbol{\Lambda}_n)_{n\in\N}$ in the space of bounded functions, the stronger moments condition \ref{ass_mom} with $\delta >4$ is needed, whereas moments of order $4+\delta$ for some $\delta >0$ are sufficient for the convergence of the finite-dimensional distributions of $(\log\Lambda_{n,t})_{t\in [t_0, 1-t_0]}$. 
\end{enumerate}
}
\end{remark}

With these preparations, we are in a position to prove Theorem \ref{thm_main_cen}.
\begin{proof}[Proof of Theorem \ref{thm_main_cen}]
Note that
\begin{align*}
    \Sigmacen = \frac{n}{n -1} \bfSigmahat - \overline{\bfy} \overline{\bfy}^\top,
\end{align*}
where $\overline{\bfy} = \overline{\bfy}_{1:n}$ denotes the sample mean of $\bfy_1 , \ldots , \bfy_n$. 
Using the matrix determinant lemma, this implies
    \begin{align*}
        \log | \Sigmacen | 
        & =  \log \big| \frac{n}{n -1} \bfSigmahat \big| + \log \lb 1 - \overline{\bfy}^\top \bfSigmahat\inv \overline{\bfy} \rb \\
        & = - p \log \lb 1 -  \frac{1}{n} \rb + \log \left|  \bfSigmahat \right| + \log \lb 1 - \overline{\bfy}^\top \bfSigmahat\inv \overline{\bfy} \rb .
    \end{align*}
    A Taylor expansion shows that $- p \log \lb 1 -  \frac{1}{n} \rb  = p/n + o(1)$, and it also holds $\log \lb 1 - \overline{\bfy}^\top \bfSigmahat\inv \overline{\bfy} \rb = \log (1- p/n) + o(1)$ almost surely \citep[see Section 4.3.1 in ][]{heiny2021log}. 
    Thus, we obtain 
    \begin{align} \label{eq_sigma_cen}
         \log | \Sigmacen |  &=  \log \left|  \bfSigmahat \right| + \frac{p}{n} + \log \lb 1 - \frac{p}{n} \rb + o(1) 
         \quad \textnormal{almost surely}. 
    \end{align}
    Similarly, one can show that
    \begin{align} \label{eq_sigma_cen_1}
           \log | \Sigmacen_{1:\nt{}} |  &=  \log \left|  \bfSigmahat_{1:\nt{}} \right| + \frac{p}{\nt{} } + \log \Big( 1 - \frac{p}{\nt{}} \Big) + o(1)  \\ 
         \label{eq_sigma_cen_2}
         \log | \Sigmacen_{( \nt{} + 1):n } |  &=  \log \left|  \bfSigmahat_{(\nt{}+1):n} \right| + \frac{p}{n - \nt{} } + \log \Big( 1 - \frac{p}{n -\nt{}} \Big) + o(1) 
    \end{align}
    almost surely.
    Combining \eqref{eq_sigma_cen}, \eqref{eq_sigma_cen_1} and \eqref{eq_sigma_cen_2}, we can derive a representation of $\log\Lambda_{n,t}^{\textnormal{cen}}$ in terms of $\log\Lambda_{n,t}$, that is 
    \begin{align} 
        \frac{2}{n} \log \Lambda_{n,t}^{\textnormal{cen}}  
        & = \frac{\nt{} }{n} \log \left| \Sigmacen_{1:\nt{}} \right| + \frac{n - \nt{} }{n} \log \left| \Sigmacen_{(\nt{}+1):n} \right|   -  \log \left| \Sigmacen_{} \right| \nonumber \\ 
        & = \frac{\nt{} }{n} \log \left| \bfSigmahat_{1:\nt{}} \right| + \frac{n - \nt{} }{n} \log \left| \bfSigmahat_{(\nt{}+1):n} \right|   -  \log \left| \bfSigmahat_{} \right|
        + \log \Big( 1 - \frac{p}{\nt{}} \Big) \nonumber \\
        & \quad + \log \Big( 1 - \frac{p}{n - \nt{}} \Big)
        -  \log \Big( 1 - \frac{p}{n} \Big) + \frac{p}{n} + o(1) \nonumber \\ 
        & = \frac{2}{n} \log \Lambda_{n,t} + \log \Big( 1 - \frac{p}{\nt{}} \Big) 
        + \frac{n - \nt{}}{n} \log \Big( 1 - \frac{p}{n - \nt{}} \Big)
       \nonumber \\ & \quad  - \frac{\nt{}}{n} \log \lb 1 - \frac{p}{n} \rb + \frac{p}{n} + o(1) . \label{eq_lambda_cen}
    \end{align}
    Next, we find a more handy form for the centering term of $\log \Lambda_{n,t}^{\textnormal{cen}}$.  As a preparation, we note that 
    \begin{align*}
        \lb n - p - \frac{3}{2} \rb  \lb \log \lb 1 - \frac {p}{n} \rb - \log \lb 1 -  \frac{p}{n -1} \rb \rb = \frac{p}{n} + o(1),
    \end{align*}
     which follows by a Taylor expansion. Then, we calculate
    \begin{align} 
        & \frac{\mu_{n,t}}{n} + \log \Big( 1 - \frac{p}{\nt{}} \Big) 
        + \log \Big( 1 - \frac{p}{n - \nt{}} \Big)
        -  \log \lb 1 - \frac{p}{n} \rb + \frac{p}{n}  \nonumber \\
         & =   \lb n - p - \frac{3}{2} \rb \log \lb 1 - \frac{p}{n } \rb 
        - \frac{ \nt{} }{n}\lb \nt{} - p - \frac{3}{2} \rb \log \lb 1- \frac{p}{\nt{} } \rb +  \frac{ (\hat\kappa_n - 4) p}{2}
        \nonumber \\ & \quad 
        - \frac{ n - \nt{} }{n} \lb n - \nt{} - p - \frac{3}{2} \rb \log \Big( 1- \frac{p}{n - \nt{} } \Big)
        + \frac{p}{n} \nonumber \\
        = &  \lb n - p - \frac{3}{2} \rb \log \Big( 1 - \frac{p}{n - 1} \Big)
        - \frac{ \nt{} }{n}\lb \nt{} - p - \frac{3}{2} \rb \log \Big( 1- \frac{p}{\nt{} - 1} \Big)
        \nonumber \\ & \quad 
        - \frac{ n - \nt{} }{n} \lb n - \nt{} - p - \frac{3}{2} \rb \log \Big( 1- \frac{p}{n - \nt{} - 1} \Big) +  \frac{ (\hat\kappa_n - 4) p}{2}
        + o(1) \nonumber \\ & = \tilde\mu_{n,t} + o(1),
         \label{expansion_centering_cen}
    \end{align}
    where we note for later usage that the $o(1)$-term does not depend on $t\in [t_0 , 1-t_0].$
    By Theorem \ref{thm_fidis}, \eqref{eq_lambda_cen} and \eqref{expansion_centering_cen}, if follows that for all fixed $k\in \N$, $t_1, \ldots, t_k \in [t_0,1-t_0]$
    \begin{align} \label{conv_fidis}
        \Big(  \frac{2 \log \Lambda_{n,t_i}^{\textnormal{cen}} - \tilde\mu_{n,t_i} }{n}\Big)_{1 \leq i \leq k} \cond (Z(t_i))_{1 \leq i \leq k}.
    \end{align}
 Next, we aim to show that to show that 
    \begin{align} \label{goal3}
        \Big(  \frac{2 \log \Lambda_{n,t}^{\textnormal{cen}} - \tilde\mu_{n,t} }{n}\Big)_{t \in [t_0 , 1 - t_0], n\in\N}
    \end{align}
    is asymptotically tight. 
    Note that 
    \begin{align}
        & \sup_{t\in [t_0, 1-t_0]} \Big| 
        2 \frac{\log \Lambda_{n,t}^{\textnormal{cen}} - \tilde\mu_{n,t} }{n}
        - 2 \frac{\log \Lambda_{n,t}^{} - \mu_{n,t} }{n}
        \Big| \nonumber \\ 
        & =  \sup_{t\in [t_0, 1-t_0]} \Big|  \log \Big( 1 - \frac{p}{\nt{}} \Big)
        + \log \Big( 1 - \frac{p}{n - \nt{}} \Big)
        -  \log \lb 1 - \frac{p}{n} \rb + \frac{p}{n} \Big| 
        + o(1) \lesssim 1 \label{diff_lambda_cen}
    \end{align}
    almost surely. By Theorem \ref{thm_tight} and \eqref{diff_lambda_cen}, we conclude that \eqref{goal3} is asymptotically tight. 
  Combining this with \eqref{conv_fidis}, it follows from  Theorem 1.5.4 on \cite{vandervaart1996} that
    \begin{align*}
        \Big( \frac{2 \log \Lambda_{n,t}^{\textnormal{cen}} - \tilde\mu_{n,t} }{n}\Big)_{t\in [t_0,1 - t_0]} \cond \big(Z(t)\big)_{t\in [t_0,1 - t_0]} 
        \quad \textnormal{in } \ell^\infty([t_0,1-t_0]).
    \end{align*}
    The proof  of Theorem \ref{thm_main_cen} concludes by an application of the continuous mapping theorem. 
\end{proof}

\subsubsection{Proof of Theorem \ref{thm_fidis} -  weak convergence of
finite-dimensional distributions} \label{sec_proof_fidis}
In the following, we prove Theorem \ref{thm_fidis}, and the necessary auxiliary results are stated in Section \ref{holsection}.
\begin{proof}[Proof of Theorem \ref{thm_fidis}]
For the sake of convenience, we restrict ourselves to the case $k=2.$ Then, using the Cramér–Wold theorem, it suffices to show that 
\begin{align*}
    a_1  \frac{2\log \Lambda_{n,t_1} - \mu_{n,t_1} }{n} 
   +  a_2  \frac{2\log \Lambda_{n,t_2} - \mu_{n,t_2} }{n}
   \cond \mathcal{N}(0, \tau_{t_1,t_2}^2 ) 
\end{align*}
for $a_1, a_2\in \R,$ where $\tau_{t_1,t_2}^2 = \Var ( a_1 Z(t_1) + a_2 Z(t_2) ). $
In the following, we establish a useful representation of $2\log \Lambda_{n,t}$  by applying a QR-decomposition to several (sub)data matrices. 
For this purpose, we define for $1 \leq i \leq j \leq n$ the matrices
\begin{align*}
    \bfX_{i:j} & = (\bfx_i, \ldots, \bfx_j) = ( \bfb_{1, i:j}, \ldots, \bfb_{p,i:j} )^\top_{\in \mathbb{R}^{p \times (i-j + 1)}} , \quad 
    \hat{\bfI}_{i:j}  = \frac{1}{j - i + 1} \bfX_{i:j} \bfX_{i:j}^\top, \quad  \\ 
    \hat \bfI   & = \hat \bfI_{1:n}, \quad
    \bfb_i = \bfb_{i,1:n}. 
\end{align*}
Moreover, let $\bfP(i;j:k)$ for $1\leq i \leq p$ and $1 \leq j\leq k \leq n$ denote the projection matrix on the orthogonal complement of 
\begin{align*}
    \operatorname{span} \{ \bfb_{1, j:k}, \ldots, \bfb_{i, j:k} \}, 
\end{align*}
that is, if we let $\bfX_{i,j:k} = (\bfb_{1,j:k}, \ldots, \bfb_{i,j:k} ) ^\top_{\in \mathbb{R}^{i \times (k-j + 1)}},$ then
\begin{align*}
    \bfP ( i; j:k ) = \bfI - \bfX_{i, j:k} ^\top 
    \lb \bfX_{i, j:k} \bfX_{i, j:k}^\top  \rb\inv \bfX_{i, j:k}
\end{align*}
Note that $X_{i : j} = X_{p : i, j}$, set $\bfP(0; j:k) = \bfI$ and $\bfP (i; 1:n) = \bfP(i).$ 
Before rewriting $\log \Lambda_{n,t}$, we need some preparations. By applying QR-decompositions to $\bfX_{1:\nt{}}^\top, ~ \bfX_{( \nt{} + 1) : n}^\top$ and $\bfX_n^\top$ (see \cite[Section 2]{wang2018} for more details), respectively, we have 
    \begin{align}
        \left| n \hat\bfI \right| & = \prod_{i=1}^p \bfb_i^\top \bfP (i-1) \bfb_i, \quad \nonumber \\
        \left| \nt{} \hat\bfI_{1:\nt{}} \right| & = \prod_{i=1}^{p} \bfb_{i, 1:\nt{}}^\top \bfP (i-1; 1:\nt{} ) \bfb_{i, 1:\nt{}}, \quad \label{qr1}\\ 
        \left| ( n - \nt{})  \hat\bfI_{(\nt{}+1) :n} \right| & = \prod_{i=1}^p \bfb_{i, (\nt{}+1) :n}^\top \bfP (i-1; (\nt{}+1) :n ) \bfb_{i, (\nt{}+1) :n}. 
        \label{qr2}
    \end{align}
    Thus, under the null hypothesis of no change point, the likelihood ratio statistic does not depend on $\bfSigma$ and we may write
    \begin{align}
       2 \log \Lambda_{n,t} & = 2 \log \frac{ \left| \hat\bfI_{1:\lfloor nt \rfloor } \right|^{\frac{1}{2} \nt{} }  \left| \hat\bfI_{(\nt{} +1):n } \right|^{\frac{1}{2} (n -\nt{}) } }{\left|\hat\bfI\right|^{\frac{1}{2}n }} \nonumber \\ 
       & = \nt{} \log \left| \hat\bfI_{1:\lfloor nt \rfloor } \right|
       + ( n - \nt{}) \log \left| \hat\bfI_{(\nt{} +1):n } \right|
       - n \log \left|\hat\bfI\right| \nonumber \\ 
    & = \nt{} \log \left| \nt{} \hat\bfI_{1:\lfloor nt \rfloor } \right|
       + ( n - \nt{}) \log \left| (n -  \nt{}) \hat\bfI_{(\nt{} +1):n } \right|  - n \log \left|n \hat\bfI\right| \\ & \quad 
       + n p \log n
       - \nt{} p \log \nt{} - (n - \nt{} ) p \log ( n- \nt{}). \nonumber \\
       & = \nt{} \sum_{i=1}^{p} \log \bfb_{i, 1:\nt{}}^\top \bfP (i-1; 1:\nt{} ) \bfb_{i, 1:\nt{}} \nonumber \\
       & 
       + ( n - \nt{}) \sum_{i=1}^p \log \bfb_{i, (\nt{}+1) :n}^\top \bfP (i-1; (\nt{}+1) :n ) \bfb_{i, (\nt{}+1) :n}  \nonumber \\ 
       &
       - n \sum_{i=1}^p \log \bfb_i^\top \bfP (i-1) \bfb_i  
       + n p \log n
       - \nt{} p \log \nt{} - (n - \nt{} ) p \log ( n- \nt{}). \label{lambda_rep}
    \end{align}
    Next, we define for $1 \leq i \leq p$ and $t\in \{t_1, t_2\}$
    \begin{align}
        X_i & = \frac{\bfb_i^\top \bfP (i-1) \bfb_i  - (n - i + 1) }{n - i + 1} , \label{def_X_i}\\
        X_{i, 1:\nt{}} & = \frac{\bfb_{i, 1:\nt{}}^\top \bfP (i-1; 1: \nt{}) \bfb_{i, 1:\nt{}}  - (\nt{} - i + 1) }{\nt{} - i + 1}, \label{def_X_i_tilde} \\ 
         X_{i, (\nt{} +1) :n} & = \frac{\bfb_{i, (\nt{} +1):n }^\top \bfP (i-1; (\nt{} +1) :n ) \bfb_{i, (\nt{}+1):n}  - (n - \nt{} - i + 1) }{n - \nt{} - i + 1}, \nonumber \\
         Y_i & = \log (1 + X_i) - \Big( X_i - \frac{X_i^2}{2} \Big) , \label{def_Y_i} \\
         Y_{i, j:k} & = \log (1 + X_{i,j:k}) - \Big( X_{i,j:k} - \frac{X_{i,j:k}^2}{2} \Big), \quad 1 \leq j \leq k \leq n. \label{def_Y_i_tilde}
    \end{align}
    Using Stirling's formula
    \begin{align}
        \log n! = n\log n - n + \frac{1}{2} \log (2\pi n) + \frac{1}{12n} + \mathcal{O}\lb n^{-3}\rb, \quad n\to\infty, \label{eq_stirling}
    \end{align}
    a straightforward calculation gives 
    \begin{align}
    & \sum_{i=1}^p \nt{} \log ( \nt{} - i + 1 ) + 
    \sum_{i=1}^p (n - \nt{} ) \log ( n - \nt{} - i + 1 )
    - n \sum_{i=1}^p \log (n - i + 1) \nonumber \\
        &  \quad  + n p \log n
       - \nt{} p \log \nt{} - (n - \nt{} ) p \log ( n- \nt{}) \nonumber \\
       & = \mu_{n,t} + \frac{ n \breve\sigma_{n,t}^2 }{2} +o(n), \quad n\to\infty, \label{stirling}
    \end{align}
    where 
       \begin{align} \label{def_sigma_breve}
        n \breve \sigma_{n,t}^2 
        =  2 n \log \lb 1- \frac{p}{n} \rb 
        - 2 \nt{} \log \Big( 1 - \frac{p}{\nt{}} \Big) 
        -2 (n - \nt{}) \log \Big( 1 - \frac{p}{n - \nt{}} \Big) - (\hat\kappa_n -3)p.
    \end{align}
   Combining \eqref{lambda_rep} and \eqref{stirling} gives the representation
   \begin{align*}
    &     a_1  \lb 2 \log \Lambda_{n,t_1} - \mu_{n,t_1}  \rb 
   +  a_2  \lb 2 \log \Lambda_{n,t_2} - \mu_{n,t_2} \rb  \\ 
   & = \sum_{j=1,2} \Bigg\{  a_j \sum_{i=1}^p \nt{j} X_{i, 1:\nt{j} } 
   + a_j \sum_{i=1}^p (n - \nt{j}) X_{i, (\nt{j} + 1) :n  }
   - a_j n \sum_{i=1}^p X_i \\ 
  & \quad  - a_j \lb \sum_{i=1}^p \nt{j} \frac{X_{i, 1:\nt{j}}}{2}^2 + (n -\nt{j}) \frac{X_{i, ( \nt{j} +1) :n }^2}{2} - n\sum_{i=1}^p  \frac{X_i^2}{2} - \frac{n \breve \sigma_{n,t_j}^2 }{2} \rb 
  \\ & \quad + a_j \sum_{i=1 }^p \nt{j} Y_{i, 1:\nt{j}} 
  + a_j \sum_{i=1 }^p (n - \nt{j})  Y_{i, ( \nt{j} + 1):n }  - a_j n \sum_{i=1}^p Y_i \Bigg\}
    + o(n) \\
    & = \sum_{j=1,2} \Bigg\{  a_j \sum_{i=1}^p \nt{j} X_{i, 1:\nt{j} } 
   + a_j \sum_{i=1}^p (n - \nt{j}) X_{i, (\nt{j} + 1) :n  }
   - a_j n \sum_{i=1}^p X_i  \Bigg\}
    + o_{\PR}(n),
   \end{align*}
    where we applied Lemma \ref{lem_mom_ineq_Y} and Lemma \ref{lem_quad_term_fidis} for the last estimate, which are given in Section \ref{holsection}. 
    Defining
    \begin{align}
        D_i & = \sum_{j=1,2} a_j  D_{i,j} , \nonumber  \\
        D_{i,j} & =   \nt{j} X_{i, 1:\nt{j} } 
   +  (n - \nt{j}) X_{i, (\nt{j} + 1) :n  }
   -  n  X_i, \quad 1 \leq i \leq p, \label{def_Dij}
    \end{align}
    it remains to show that 
    \begin{align} \label{conv_di}
        \frac{1}{n}\sum_{i=1}^p  D_i \cond \mathcal{N}(0, \tau_{t_1,t_2}^2). 
    \end{align}
    Note that $(D_i/n)_{1 \leq i \leq p}$ forms a martingale difference scheme with respect to filtration $(\mathcal{A}_i)_{1\leq i \leq p}$, where the $\sigma$-field $\mathcal{A}_i$ is generated by the random variables $\bfb_1, \ldots, \bfb_i$ for $1 \leq i \leq p.$ 
    In the following, we will show that 
    \begin{align} 
        \label{conv_cov} 
        \sum_{i=1}^p \E \left[ \frac{ D_{i,1} D_{i,2} }{n^2} \Big| \mathcal{A}_{i-1} \right] & =  \cov (Z(t_1), Z(t_2)) + \op, \\
        \label{lindeberg}
        \sum_{i=1}^p \E \left[ D_{i,j}^2 I\{ |D_{i,,j}| > \varepsilon \} \right] & = \op, \quad j=1,2,
    \end{align}
    By the CLT for martingale differences  \citep[see, for example, Corollary 3.1 in][]{hall_heyde}, these statements imply \eqref{conv_di}. 
    Regarding \eqref{lindeberg}, we have, by Lemma B.26 in \cite{bai2004}, for $\varepsilon > 0$
   \begin{align*} 
     \E \left|   \sum_{i=1}^p \E [ X_{i, 1:\nt{} }^2 I \{ | X_{i, 1:\nt{} } | > \varepsilon \} | \mathcal{A}_{i -1 } ]  \right| 
     & \lesssim \frac{1}{\varepsilon^{\delta / 2 } } \sum_{i=1}^p \E \left| X_{i, 1:\nt{} } \right|^{2+\delta/2} \nonumber \\ 
    &  \lesssim \sum_{i=1}^p  \frac{1}{(\nt{} - i +1)^{1+\delta/4}}  = o(1). 
   \end{align*}
    The other terms in $D_{i,j}$ can be bounded similarly and we \eqref{lindeberg} follows. Next we concentrate on the calculation of the covariance kernel in \eqref{conv_cov}. 
    We define for 
    {$1 \leq j_1 \leq j_2 \leq k_2 \leq k_1$} (such that $k_l - j_l - p > 0 $ for $l=1,2$
     { \begin{align} \label{eq_tr_prod_p}
        \bfP^{j_2:k_2} (i-1; j_1 : k_1) = \lb \lb \bfP (i-1; j_1 : k_1) \rb_{k,l} \rb _{j_2 \leq k,l \leq k_2} \in \R^{(k_2 - j_2 + 1) \times (k_2 - j_2 + 1)}.
    \end{align} } 
    In particular, we have $ \bfP^{j_1:k_1} (i-1; j_1 : k_1) = \bfP (i-1; j_1 : k_1)$ and
   
   { $$\tr \lb  \bfP^{j_2:k_2} (i-1;j_1:k_1)  \bfP (i-1;j_2:k_2)\rb =  (k_2 - j_2 - i+ 1). $$ }
    Using  formula (9.8.6) in \cite{bai2004} we calculate for integers 
    {$j_1,j_2,k_1,k_2$ such that $(k_1 \wedge k_2) - (j_1 \vee j_2) - p > 0 $} for $l=1,2$ 
    \begin{align}
       & n^2 \sigma^2(j_1,k_1, j_2, k_2) : =  \sum_{i=1}^p ( k_1 - j_1 + 1) (k_2 - j_2 + 1) \E \left[ X_{i, j_1 : k_1} X_{i,j_2:k_2} \big| \mathcal{A}_{i-1} \right] \nonumber \\ 
        & =  \sum_{i=1}^p \frac{ ( k_1 - j_1 + 1) (k_2 - j_2 + 1) }{( k_1 - j_1 - i + 1 ) (k_2 - j_2 - i + 1)} \\ &  \quad \quad \times
         \E \Big[ \prod_{l=1,2} \left\{ \bfb_{i,j_l:k_l}^\top  \bfP(i-1;j_l:k_l) \bfb_{i,j_l:k_l} -  ( k_l - j_l - i + 1 ) \right\}  \big| \mathcal{A}_{i-1} \Big] \nonumber \\
              & = n^2 \sigma_1^2(j_1,k_1, j_2, k_2) +(\E [ x_{11}^4]  - 3)  n^2 \sigma_2^2(j_1,k_1, j_2, k_2), \label{def_sigma_jk} 
    \end{align}
where 
{
\begin{align*}
    n^2 \sigma_1^2(j_1,k_1, j_2, k_2) & = 2 \sum_{i=1}^p  \frac{  ( k_1 - j_1 + 1) (k_2 - j_2 + 1) }{( k_1 - j_1 - i + 1 ) \vee (k_2 - j_2 - i + 1)} \\
    & \quad \quad  \times \tr \lb  \bfP^{(j_1 \vee j_2):(k_1 \wedge k_2)} (i-1;j_1:k_1)   \bfP^{(j_1 \vee j_2):(k_1 \wedge k_2)} (i-1;j_2:k_2) \rb , \\
    n^2 \sigma_2^2(j_1,k_1, j_2, k_2) & = \sum_{i=1}^p  \frac{  ( k_1 - j_1 + 1) (k_2 - j_2 + 1) }{( k_1 - j_1 - i + 1 ) (k_2 - j_2 - i + 1)} \\
    & \quad \quad  \times \tr \lb  \bfP^{(j_1 \vee j_2):(k_1 \wedge k_2)} (i-1;j_1:k_1) \odot  \bfP^{(j_1 \vee j_2):(k_1 \wedge k_2)} (i-1;j_2:k_2) \rb 
\end{align*}}
   and $'\odot'$ denotes the Hadamard product. We will evaluate these expressions  in the case, where $k_1$ and $k_2$ (and maybe also $j_1,j_2$) are proportional to $n$
   using the expansion for the partial sums of the harmonic series
    \begin{align*}
        \sum_{k=1}^n \frac{1}{k} = \log n + \gamma + \mathcal{O} \lb \frac{1}{n} \rb , \quad n\to\infty,  
    \end{align*}
    (where $\gamma$ denotes the Euler-Mascheroni constant). Using this estimate and \eqref{eq_tr_prod_p}, we obtain for  $k_2 - j_2 + 1 = (k_1 - j_1 + 1 ) \vee (k_2 - j_2 + 1) $  
    \begin{align}
        \sigma_1^2(j_1, k_1, j_2, k_2) & =  
        2 \sum_{i=1}^p \frac{(k_1 - j_1 + 1) (k_2 - j_2 + 1)}{n^2 (k_2 - j_2 - i +1)}  \nonumber \\
        & = 2 \frac{(k_1 - j_1 + 1) (k_2 - j_2 + 1)}{n^2} \sum_{i=k_2 - j_2 - p +1}^{k_2 -j_2} \frac{1}{i} 
      \nonumber \\ 
        & = 2 \frac{(k_1 - j_1 + 1) (k_2 - j_2 + 1)}{n^2} \Big\{ 
        \sum_{i=1}^{k_2 -j_2} \frac{1}{i} - \sum_{i=1}^{k_2 -j_2- p} \frac{1}{i}   \Big\} \nonumber \\ 
        & = - 2 \frac{(k_1 - j_1 + 1) (k_2 - j_2 + 1)}{n^2} 
        \log\Big( 1 - \frac{p}{k_2 - j_2 } \Big) + o(1) \nonumber \\ 
        & = - 2 \frac{(k_1 - j_1 + 1) (k_2 - j_2 + 1)}{n^2} 
        \log\Big( 1 - \frac{p}{(k_1 - j_1 ) \vee (k_2 - j_2 )} \Big) + o(1). \label{formula_sigma}
    \end{align} 
    For later use, we note that the $o(1)$ term in \eqref{formula_sigma} does not depend on $t \in [t_0,1-t_0]$, if we set $j_1=j_2=1, k_1=k_2 = \nt{}$ or $j_1=j_2= \nt{}+1, k_1 = k_2=n.$ Moreover, in the case $k_2 - j_2 + 1 = (k_1 - j_1 + 1 ) \vee (k_2 - j_2 + 1) $, it follows from Lemma \ref{lem_sigma_2} 
   in Section \ref{holsection} below 
    that 
    \begin{align} \nonumber 
        \sigma_2^2(j_1, k_1, j_2, k_2) 
        & = y \frac{k_1 - j_1 + 1 }{n} + \op,  \\ 
        \sigma_2^2(\nt{1}+1, n , 1, \nt{2}  ) & = y (t_2 - t_1) + \op. 
        \label{eq_sigma_2}
    \end{align}

     To calculate $\cov (Z(t_1), Z(t_2))$ using \eqref{formula_sigma},  we use that  $\sigma^2( j_1, k_1, j_2,k_2) = 0$ if $1 \leq j_1 \leq k_1 < j_2 \leq k_2 \leq n$ (this corresponds to the case that $X_{i,j_1:k_1}$ and $ X_{i,j_2:k_2}$ are independent and thus, for all $1 \leq i \leq p$, $\E [X_{i,j_1:k_1} X_{i,j_2:k_2} | \mathcal{A}_{i-1} ] = 0$). In the following, we assume that $t_1 < t_2$, which implies $\sigma^2(1,\nt{1}, \nt{2} +1, n ) =0 $.
    Combining \eqref{def_sigma_jk}  and \eqref{eq_sigma_2}  gives
    \begin{align*}
         &  \sum_{i=1}^p \E \left[ \frac{ D_{i,1} D_{i,2} }{n^2} \Big| \mathcal{A}_{i-1} \right] \\ 
         & = - \sigma^2(1,\nt{1}, 1, n  )
         + \sigma^2(1,\nt{1}, 1, \nt{2}  )
         +  \sigma^2(1,\nt{1}, \nt{2} +1, n )  \\
        & \quad - \sigma^2(\nt{1}+1, n , 1, n  )
        +  \sigma^2(\nt{1}+1, n , 1, \nt{2}  ) 
        + \sigma^2(\nt{1}+1, n , \nt{2}+1, n ) \\
        & \quad + \sigma^2(1, n , 1, n  )
        - \sigma^2(1, \nt{2}, 1, n  )
        - \sigma^2(\nt{2}+1,n , 1, n  ) + o_{\PR}(1) \\ 
        & = - \sigma_1^2(1,\nt{1}, 1, n  )
         + \sigma_1^2(1,\nt{1}, 1, \nt{2}  )
         - \sigma_1^2(\nt{1}+1, n , 1, n  )
        \\
        & \quad
        +   \sigma_1^2(\nt{1}+1, n , 1, \nt{2}  ) 
        + \sigma_1^2(\nt{1}+1, n , \nt{2}+1, n ) \\
        & \quad + \sigma_1^2(1, n , 1, n  )
        - \sigma_1^2(1, \nt{2}, 1, n  )
        - \sigma_1^2(\nt{2}+1,n , 1, n  ) + o_{\PR}(1). 
    \end{align*}
    Here, we used \eqref{eq_sigma_2} to see that the contributions of the $\sigma_2^2$-terms cancel each other out. 
   Next, 
   we use Lemma \ref{lem_cov_term} in Section \ref{holsection} below 
   to compute the term $ \sigma_1^2(\nt{1}+1, n , 1, \nt{2}  )$.  For all remaining $\sigma_1^2$-terms, we use \eqref{formula_sigma} and  obtain
   \begin{align*}
       &  \sum_{i=1}^p \E \left[ \frac{ D_{i,1} D_{i,2} }{n^2} \Big| \mathcal{A}_{i-1} \right]  \\
       & = 2 t_1 \log ( 1- y) 
       -  2 t_1 t_2 \log \Big( 1 - y/t_2 \Big)   
       + 2 ( 1 - t_1) \log ( 1-y)
       \\ 
      & \quad  
        - 2 (1-t_1)t_2 \log  \lb 1 - \frac{(t_2 - t_1) y}{(1-t_1)t_2 } \rb  
       -2 ( 1- t_1) (1 - t_2) \log ( 1 - y / (1-t_1) ) \\ 
       & \quad - 2 \log ( 1- y) 
       + 2 t_2 \log (1 -y) 
       + 2 (1-t_2) \log (1-y) \\ 
       & =  2\log ( 1- y) -  2 t_1 t_2 \log ( 1 - y/t_2 ) 
         - 2 (1-t_1)t_2 \log  \Big( 1 - \frac{(t_2 - t_1) y}{(1-t_1)t_2 } \Big)
        \\ & \quad 
       - 2 ( 1- t_1) (1 - t_2) \log ( 1 - y / (1-t_1) ) + \op \\ 
       & = \cov(Z(t_1), Z(t_2)) + \op.
   \end{align*}
   If $t_1 = t_2=t$ , then we get
    \begin{align*} 
        \Var (Z(t) ) = 2 \log ( 1- y) - 2 t^2 \log ( 1- y/t) 
        - 2 (1-t)^2 \log ( 1- y/(1-t) ) . 
    \end{align*}

\end{proof}

%
%

\begin{funding}
This work  was partially supported by the  
 DFG Research unit 5381 {\it Mathematical Statistics in the Information Age}, project number 460867398, and the Aarhus University Research Foundation (AUFF), project numbers 47221 and 47388.
\end{funding}

\begin{supplement}
\stitle{~}
\sdescription{The online supplement contains the proofs of Theorem \ref{thm_main_cen} and its auxiliary results.}
\end{supplement}


\bibliographystyle{imsart-nameyear} 
\bibliography{references}       

\begin{thebibliography}{49}

\bibitem[\protect\citeauthoryear{Aue et~al.}{2009}]{Aue2009b}
\begin{barticle}[author]
\bauthor{\bsnm{Aue},~\bfnm{Alexander}\binits{A.}},
  \bauthor{\bsnm{H{\"o}rmann},~\bfnm{Siegfried}\binits{S.}},
  \bauthor{\bsnm{Horv{\'a}th},~\bfnm{Lajos}\binits{L.}} \AND
  \bauthor{\bsnm{Reimherr},~\bfnm{Matthew}\binits{M.}}
(\byear{2009}).
\btitle{Break detection in the covariance structure of multivariate time series
  models}.
\bjournal{The Annals of Statistics}
\bvolume{37}
\bpages{4046--4087}.
\end{barticle}
\endbibitem

\bibitem[\protect\citeauthoryear{Avanesov and Buzun}{2018}]{AvanesovBuzun2018}
\begin{barticle}[author]
\bauthor{\bsnm{Avanesov},~\bfnm{Valeriy}\binits{V.}} \AND
  \bauthor{\bsnm{Buzun},~\bfnm{Nazar}\binits{N.}}
(\byear{2018}).
\btitle{Change-point detection in high-dimensional covariance structure}.
\bjournal{Electronic Journal of Statistics}
\bvolume{12}
\bpages{3254--3294}.
\end{barticle}
\endbibitem

\bibitem[\protect\citeauthoryear{Bai and Saranadasa}{1996}]{bai1996effect}
\begin{barticle}[author]
\bauthor{\bsnm{Bai},~\bfnm{Zhidong}\binits{Z.}} \AND
  \bauthor{\bsnm{Saranadasa},~\bfnm{Hewa}\binits{H.}}
(\byear{1996}).
\btitle{Effect of high dimension: by an example of a two sample problem}.
\bjournal{Statistica Sinica}
\bpages{311--329}.
\end{barticle}
\endbibitem

\bibitem[\protect\citeauthoryear{Bai and Silverstein}{2010}]{bai2004}
\begin{bbook}[author]
\bauthor{\bsnm{Bai},~\bfnm{Zhidong}\binits{Z.}} \AND
  \bauthor{\bsnm{Silverstein},~\bfnm{Jack~W}\binits{J.~W.}}
(\byear{2010}).
\btitle{Spectral Analysis of Large Dimensional Random Matrices}
\bvolume{20}.
\bpublisher{Springer}.
\end{bbook}
\endbibitem

\bibitem[\protect\citeauthoryear{Bai et~al.}{2009}]{bai2009corrections}
\begin{barticle}[author]
\bauthor{\bsnm{Bai},~\bfnm{Zhidong}\binits{Z.}},
  \bauthor{\bsnm{Jiang},~\bfnm{Dandan}\binits{D.}},
  \bauthor{\bsnm{Yao},~\bfnm{Jian-Feng}\binits{J.-F.}} \AND
  \bauthor{\bsnm{Zheng},~\bfnm{Shurong}\binits{S.}}
(\byear{2009}).
\btitle{Corrections to LRT on large-dimensional covariance matrix by RMT}.
\bjournal{Annals of Statistics}
\bvolume{37}
\bpages{3822--3840}.
\end{barticle}
\endbibitem

\bibitem[\protect\citeauthoryear{Bao et~al.}{2022}]{bao2022spectral}
\begin{barticle}[author]
\bauthor{\bsnm{Bao},~\bfnm{Zhigang}\binits{Z.}},
  \bauthor{\bsnm{Hu},~\bfnm{Jiang}\binits{J.}},
  \bauthor{\bsnm{Xu},~\bfnm{Xiaocong}\binits{X.}} \AND
  \bauthor{\bsnm{Zhang},~\bfnm{Xiaozhuo}\binits{X.}}
(\byear{2022}).
\btitle{Spectral Statistics of Sample Block Correlation Matrices}.
\bjournal{arXiv preprint arXiv:2207.06107}.
\end{barticle}
\endbibitem

\bibitem[\protect\citeauthoryear{Chen and Gupta}{2004}]{ChenGupta2004}
\begin{barticle}[author]
\bauthor{\bsnm{Chen},~\bfnm{Jie}\binits{J.}} \AND
  \bauthor{\bsnm{Gupta},~\bfnm{A.~K}\binits{A.~K.}}
(\byear{2004}).
\btitle{Statistical inference of covariance change points in gaussian model}.
\bjournal{Statistics}
\bvolume{38}
\bpages{17-28}.
\end{barticle}
\endbibitem

\bibitem[\protect\citeauthoryear{Chen, Wang and Wu}{2022}]{chengwangwu}
\begin{barticle}[author]
\bauthor{\bsnm{Chen},~\bfnm{Likai}\binits{L.}},
  \bauthor{\bsnm{Wang},~\bfnm{Weining}\binits{W.}} \AND
  \bauthor{\bsnm{Wu},~\bfnm{Wei~Biao}\binits{W.~B.}}
(\byear{2022}).
\btitle{Inference of Breakpoints in High-dimensional Time Series}.
\bjournal{Journal of the American Statistical Association}
\bvolume{117}
\bpages{1951-1963}.
\end{barticle}
\endbibitem

\bibitem[\protect\citeauthoryear{Cho and Fryzlewicz}{2015}]{chofryz2015}
\begin{barticle}[author]
\bauthor{\bsnm{Cho},~\bfnm{Haeran}\binits{H.}} \AND
  \bauthor{\bsnm{Fryzlewicz},~\bfnm{Piotr}\binits{P.}}
(\byear{2015}).
\btitle{Multiple-change-point detection for high dimensional time series via
  sparsified binary segmentation}.
\bjournal{Journal of the Royal Statistical Society. Series B (Statistical
  Methodology)}
\bvolume{77}
\bpages{475--507}.
\end{barticle}
\endbibitem

\bibitem[\protect\citeauthoryear{Cs{\"o}rgő and
  Horváth}{1997}]{CsorgoHorvath1997}
\begin{bbook}[author]
\bauthor{\bsnm{Cs{\"o}rgő},~\bfnm{Miklós}\binits{M.}} \AND
  \bauthor{\bsnm{Horváth},~\bfnm{Lajos}\binits{L.}}
(\byear{1997}).
\btitle{Limit Theorems in Change-Point Analysis}.
\bseries{Wiley Series in Probability and Statistics}.
\bpublisher{Wiley}, \baddress{Chichester, UK}.
\end{bbook}
\endbibitem

\bibitem[\protect\citeauthoryear{Dette and
  D{\"o}rnemann}{2020}]{dettedoernemann2020}
\begin{barticle}[author]
\bauthor{\bsnm{Dette},~\bfnm{Holger}\binits{H.}} \AND
  \bauthor{\bsnm{D{\"o}rnemann},~\bfnm{Nina}\binits{N.}}
(\byear{2020}).
\btitle{Likelihood ratio tests for many groups in high dimensions}.
\bjournal{Journal of Multivariate Analysis}
\bvolume{178}
\bpages{104605}.
\end{barticle}
\endbibitem

\bibitem[\protect\citeauthoryear{Dette and
  G{\"o}smann}{2020}]{dettegoesmann2020}
\begin{barticle}[author]
\bauthor{\bsnm{Dette},~\bfnm{Holger}\binits{H.}} \AND
  \bauthor{\bsnm{G{\"o}smann},~\bfnm{Josua}\binits{J.}}
(\byear{2020}).
\btitle{A Likelihood Ratio Approach to Sequential Change Point Detection for a
  General Class of Parameters}.
\bjournal{Journal of the American Statistical Association}
\bvolume{115}
\bpages{1361-1377}.
\end{barticle}
\endbibitem

\bibitem[\protect\citeauthoryear{Dette, Pan and Yang}{2022}]{dettePanYang2022}
\begin{barticle}[author]
\bauthor{\bsnm{Dette},~\bfnm{Holger}\binits{H.}},
  \bauthor{\bsnm{Pan},~\bfnm{Guangming}\binits{G.}} \AND
  \bauthor{\bsnm{Yang},~\bfnm{Qing}\binits{Q.}}
(\byear{2022}).
\btitle{Estimating a Change Point in a Sequence of Very High-Dimensional
  Covariance Matrices}.
\bjournal{Journal of the American Statistical Association}
\bvolume{117}
\bpages{444-454}.
\end{barticle}
\endbibitem

\bibitem[\protect\citeauthoryear{Dette and Tomecki}{2019}]{tomecki}
\begin{barticle}[author]
\bauthor{\bsnm{Dette},~\bfnm{Holger}\binits{H.}} \AND
  \bauthor{\bsnm{Tomecki},~\bfnm{Dominik}\binits{D.}}
(\byear{2019}).
\btitle{Determinants of block {H}ankel matrices for random matrix-valued
  measures}.
\bjournal{Stochastic Processes and their Applications}
\bvolume{129}
\bpages{5200--5235}.
\end{barticle}
\endbibitem

\bibitem[\protect\citeauthoryear{Dette and Wied}{2016}]{dettewied2016}
\begin{barticle}[author]
\bauthor{\bsnm{Dette},~\bfnm{Holger}\binits{H.}} \AND
  \bauthor{\bsnm{Wied},~\bfnm{Dominik}\binits{D.}}
(\byear{2016}).
\btitle{Detecting relevant changes in time series models}.
\bjournal{Journal of the Royal Statistical Society: Series B (Statistical
  Methodology)}
\bvolume{78}
\bpages{371-394}.
\end{barticle}
\endbibitem

\bibitem[\protect\citeauthoryear{D{\"o}rnemann}{2022}]{diss}
\begin{barticle}[author]
\bauthor{\bsnm{D{\"o}rnemann},~\bfnm{Nina}\binits{N.}}
(\byear{2022}).
\btitle{Asymptotics for linear spectral statistics of sample covariance
  matrices}.
\bjournal{Dissertation, Ruhr-Universit\"at Bochum}.
\end{barticle}
\endbibitem

\bibitem[\protect\citeauthoryear{D{\"o}rnemann}{2023}]{dornemann2023likelihood}
\begin{barticle}[author]
\bauthor{\bsnm{D{\"o}rnemann},~\bfnm{Nina}\binits{N.}}
(\byear{2023}).
\btitle{Likelihood ratio tests under model misspecification in high
  dimensions}.
\bjournal{Journal of Multivariate Analysis}
\bvolume{193}
\bpages{105122}.
\end{barticle}
\endbibitem

\bibitem[\protect\citeauthoryear{D{\"o}rnemann and
  Dette}{2024}]{dornemann2021linear}
\begin{barticle}[author]
\bauthor{\bsnm{D{\"o}rnemann},~\bfnm{Nina}\binits{N.}} \AND
  \bauthor{\bsnm{Dette},~\bfnm{Holger}\binits{H.}}
(\byear{2024}).
\btitle{Linear spectral statistics of sequential sample covariance matrices}.
\bjournal{Annales de l'Institut Henri Poincare (B) Probabilites et
  statistiques}
\bvolume{60}
\bpages{946--970}.
\end{barticle}
\endbibitem

\bibitem[\protect\citeauthoryear{D{\"o}rnemann and
  Paul}{2024}]{dornemann2024detecting}
\begin{barticle}[author]
\bauthor{\bsnm{D{\"o}rnemann},~\bfnm{Nina}\binits{N.}} \AND
  \bauthor{\bsnm{Paul},~\bfnm{Debashis}\binits{D.}}
(\byear{2024}).
\btitle{Detecting Spectral Breaks in Spiked Covariance Models}.
\bjournal{arXiv preprint arXiv:2404.19176}.
\end{barticle}
\endbibitem

\bibitem[\protect\citeauthoryear{Enikeeva and Harchaoui}{2019}]{faridazaid}
\begin{barticle}[author]
\bauthor{\bsnm{Enikeeva},~\bfnm{Farida}\binits{F.}} \AND
  \bauthor{\bsnm{Harchaoui},~\bfnm{Zaid}\binits{Z.}}
(\byear{2019}).
\btitle{{High-dimensional change-point detection under sparse alternatives}}.
\bjournal{The Annals of Statistics}
\bvolume{47}
\bpages{2051 -- 2079}.
\end{barticle}
\endbibitem

\bibitem[\protect\citeauthoryear{Galeano and Pe{\~n}a}{2007}]{galeanopena2007}
\begin{barticle}[author]
\bauthor{\bsnm{Galeano},~\bfnm{Pedro}\binits{P.}} \AND
  \bauthor{\bsnm{Pe{\~n}a},~\bfnm{Daniel}\binits{D.}}
(\byear{2007}).
\btitle{Covariance changes detection in multivariate time series}.
\bjournal{Journal of Statistical Planning and Inference}
\bvolume{137}
\bpages{194-211}.
\end{barticle}
\endbibitem

\bibitem[\protect\citeauthoryear{Guo and Qi}{2024}]{guo2024asymptotic}
\begin{barticle}[author]
\bauthor{\bsnm{Guo},~\bfnm{Wenchuan}\binits{W.}} \AND
  \bauthor{\bsnm{Qi},~\bfnm{Yongcheng}\binits{Y.}}
(\byear{2024}).
\btitle{Asymptotic distributions for likelihood ratio tests for the equality of
  covariance matrices}.
\bjournal{Metrika}
\bvolume{87}
\bpages{247--279}.
\end{barticle}
\endbibitem

\bibitem[\protect\citeauthoryear{Hall and Heyde}{1980}]{hall_heyde}
\begin{bbook}[author]
\bauthor{\bsnm{Hall},~\bfnm{Peter}\binits{P.}} \AND
  \bauthor{\bsnm{Heyde},~\bfnm{Christopher~C}\binits{C.~C.}}
(\byear{1980}).
\btitle{Martingale limit theory and its application}.
\bpublisher{Academic press}.
\end{bbook}
\endbibitem

\bibitem[\protect\citeauthoryear{Heiny and Parolya}{2024}]{heiny2021log}
\begin{barticle}[author]
\bauthor{\bsnm{Heiny},~\bfnm{Johannes}\binits{J.}} \AND
  \bauthor{\bsnm{Parolya},~\bfnm{Nestor}\binits{N.}}
(\byear{2024}).
\btitle{Log determinant of large correlation matrices under infinite fourth
  moment}.
\bjournal{Annales de l'Institut Henri Poincare (B) Probabilites et
  statistiques}
\bvolume{60}
\bpages{1048--1076}.
\end{barticle}
\endbibitem

\bibitem[\protect\citeauthoryear{Jiang and Qi}{2015}]{jiang2015}
\begin{barticle}[author]
\bauthor{\bsnm{Jiang},~\bfnm{Tiefeng}\binits{T.}} \AND
  \bauthor{\bsnm{Qi},~\bfnm{Yongcheng}\binits{Y.}}
(\byear{2015}).
\btitle{Likelihood Ratio Tests for High-Dimensional Normal Distributions}.
\bjournal{Scandinavian Journal of Statistics}
\bvolume{42}
\bpages{988--1009}.
\end{barticle}
\endbibitem

\bibitem[\protect\citeauthoryear{Jiang and Yang}{2013}]{jiang_yang_2013}
\begin{barticle}[author]
\bauthor{\bsnm{Jiang},~\bfnm{Tiefeng}\binits{T.}} \AND
  \bauthor{\bsnm{Yang},~\bfnm{Fan}\binits{F.}}
(\byear{2013}).
\btitle{Central limit theorems for classical likelihood ratio tests for
  high-dimensional normal distributions}.
\bjournal{The Annals of Statistics}
\bvolume{41}
\bpages{2029--2074}.
\end{barticle}
\endbibitem

\bibitem[\protect\citeauthoryear{Jirak}{2015}]{jirak2015}
\begin{barticle}[author]
\bauthor{\bsnm{Jirak},~\bfnm{Moritz}\binits{M.}}
(\byear{2015}).
\btitle{{Uniform change point tests in high dimension}}.
\bjournal{The Annals of Statistics}
\bvolume{43}
\bpages{2451 -- 2483}.
\end{barticle}
\endbibitem

\bibitem[\protect\citeauthoryear{Lavielle and Teyssiere}{2006}]{Lavielle2006}
\begin{barticle}[author]
\bauthor{\bsnm{Lavielle},~\bfnm{Marc}\binits{M.}} \AND
  \bauthor{\bsnm{Teyssiere},~\bfnm{Gilles}\binits{G.}}
(\byear{2006}).
\btitle{Detection of multiple change-points in multivariate time series}.
\bjournal{Lithuanian Mathematical Journal}
\bvolume{46}
\bpages{287--306}.
\end{barticle}
\endbibitem

\bibitem[\protect\citeauthoryear{Li}{2003}]{li2003}
\begin{barticle}[author]
\bauthor{\bsnm{Li},~\bfnm{Yulin}\binits{Y.}}
(\byear{2003}).
\btitle{A martingale inequality and large deviations}.
\bjournal{Statistics \& Probability Letters}
\bvolume{62}
\bpages{317--321}.
\end{barticle}
\endbibitem

\bibitem[\protect\citeauthoryear{Li and Chen}{2012}]{lichen2012}
\begin{barticle}[author]
\bauthor{\bsnm{Li},~\bfnm{Jun}\binits{J.}} \AND
  \bauthor{\bsnm{Chen},~\bfnm{Song~Xi}\binits{S.~X.}}
(\byear{2012}).
\btitle{Two sample tests for high-dimensional covariance matrices}.
\bjournal{The Annals of Statistics}
\bvolume{40}
\bpages{908--940}.
\end{barticle}
\endbibitem

\bibitem[\protect\citeauthoryear{Li and Gao}{2024}]{li2024efficient}
\begin{barticle}[author]
\bauthor{\bsnm{Li},~\bfnm{Zhaoyuan}\binits{Z.}} \AND
  \bauthor{\bsnm{Gao},~\bfnm{Jie}\binits{J.}}
(\byear{2024}).
\btitle{Efficient change point detection and estimation in high-dimensional
  correlation matrices}.
\bjournal{Electronic Journal of Statistics}
\bvolume{18}
\bpages{942--979}.
\end{barticle}
\endbibitem

\bibitem[\protect\citeauthoryear{Liu, Gao and Samworth}{2021}]{liuetal2021}
\begin{barticle}[author]
\bauthor{\bsnm{Liu},~\bfnm{Haoyang}\binits{H.}},
  \bauthor{\bsnm{Gao},~\bfnm{Chao}\binits{C.}} \AND
  \bauthor{\bsnm{Samworth},~\bfnm{Richard~J.}\binits{R.~J.}}
(\byear{2021}).
\btitle{{Minimax rates in sparse, high-dimensional change point detection}}.
\bjournal{The Annals of Statistics}
\bvolume{49}
\bpages{1081 -- 1112}.
\end{barticle}
\endbibitem

\bibitem[\protect\citeauthoryear{Liu et~al.}{2020}]{liuetal2020}
\begin{barticle}[author]
\bauthor{\bsnm{Liu},~\bfnm{Bin}\binits{B.}},
  \bauthor{\bsnm{Zhou},~\bfnm{Cheng}\binits{C.}},
  \bauthor{\bsnm{Zhang},~\bfnm{Xinsheng}\binits{X.}} \AND
  \bauthor{\bsnm{Liu},~\bfnm{Yufeng}\binits{Y.}}
(\byear{2020}).
\btitle{{A Unified Data-Adaptive Framework for High Dimensional Change Point
  Detection}}.
\bjournal{Journal of the Royal Statistical Society Series B: Statistical
  Methodology}
\bvolume{82}
\bpages{933-963}.
\end{barticle}
\endbibitem

\bibitem[\protect\citeauthoryear{Lopes, Blandino and
  Aue}{2019}]{lopes2019bootstrapping}
\begin{barticle}[author]
\bauthor{\bsnm{Lopes},~\bfnm{Miles~E}\binits{M.~E.}},
  \bauthor{\bsnm{Blandino},~\bfnm{Andrew}\binits{A.}} \AND
  \bauthor{\bsnm{Aue},~\bfnm{Alexander}\binits{A.}}
(\byear{2019}).
\btitle{Bootstrapping spectral statistics in high dimensions}.
\bjournal{Biometrika}
\bvolume{106}
\bpages{781--801}.
\end{barticle}
\endbibitem

\bibitem[\protect\citeauthoryear{Najim and Yao}{2016}]{najimyao2016}
\begin{barticle}[author]
\bauthor{\bsnm{Najim},~\bfnm{Jamal}\binits{J.}} \AND
  \bauthor{\bsnm{Yao},~\bfnm{Jianfeng}\binits{J.}}
(\byear{2016}).
\btitle{Gaussian fluctuations for linear spectral statistics of large random
  covariance matrices}.
\bjournal{Annals of Applied Probability}
\bvolume{26}
\bpages{1837--1887}.
\end{barticle}
\endbibitem

\bibitem[\protect\citeauthoryear{Page}{1954}]{page1954}
\begin{barticle}[author]
\bauthor{\bsnm{Page},~\bfnm{E.~S.}\binits{E.~S.}}
(\byear{1954}).
\btitle{Continuous Inspection Schemes.}
\bjournal{Biometrika}
\bvolume{41}
\bpages{100-115}.
\end{barticle}
\endbibitem

\bibitem[\protect\citeauthoryear{Pan and Zhou}{2008}]{panzhou2008}
\begin{barticle}[author]
\bauthor{\bsnm{Pan},~\bfnm{GM}\binits{G.}} \AND
  \bauthor{\bsnm{Zhou},~\bfnm{W}\binits{W.}}
(\byear{2008}).
\btitle{Central limit theorem for signal-to-interference ratio of reduced rank
  linear receiver}.
\bjournal{The Annals of Applied Probability}
\bvolume{18}
\bpages{1232--1270}.
\end{barticle}
\endbibitem

\bibitem[\protect\citeauthoryear{Ryan and Killick}{2023}]{ryankillick2023}
\begin{barticle}[author]
\bauthor{\bsnm{Ryan},~\bfnm{Sean}\binits{S.}} \AND
  \bauthor{\bsnm{Killick},~\bfnm{Rebecca}\binits{R.}}
(\byear{2023}).
\btitle{Detecting Changes in Covariance via Random Matrix Theory}.
\bjournal{Technometrics}
\bvolume{65}
\bpages{480-491}.
\end{barticle}
\endbibitem

\bibitem[\protect\citeauthoryear{Van Der~Vaart and
  Wellner}{1996}]{vandervaart1996}
\begin{bbook}[author]
\bauthor{\bsnm{Van Der~Vaart},~\bfnm{Aad~W}\binits{A.~W.}} \AND
  \bauthor{\bsnm{Wellner},~\bfnm{Jon~A}\binits{J.~A.}}
(\byear{1996}).
\btitle{Weak convergence}.
\bpublisher{Springer}.
\end{bbook}
\endbibitem

\bibitem[\protect\citeauthoryear{Wald}{1945}]{wald1945}
\begin{barticle}[author]
\bauthor{\bsnm{Wald},~\bfnm{A.}\binits{A.}}
(\byear{1945}).
\btitle{Sequential tests of statistical hypotheses.}
\bjournal{Annals of Mathematical Statistics}
\bvolume{16}
\bpages{117-186}.
\end{barticle}
\endbibitem

\bibitem[\protect\citeauthoryear{Wang, Han and Pan}{2018}]{wang2018}
\begin{barticle}[author]
\bauthor{\bsnm{Wang},~\bfnm{Xuejun}\binits{X.}},
  \bauthor{\bsnm{Han},~\bfnm{Xiao}\binits{X.}} \AND
  \bauthor{\bsnm{Pan},~\bfnm{Guangming}\binits{G.}}
(\byear{2018}).
\btitle{The logarithmic law of sample covariance matrices near singularity}.
\bjournal{Bernoulli}
\bvolume{24}
\bpages{80--114}.
\end{barticle}
\endbibitem

\bibitem[\protect\citeauthoryear{Wang and Yao}{2021}]{wang2021}
\begin{barticle}[author]
\bauthor{\bsnm{Wang},~\bfnm{Zhenggang}\binits{Z.}} \AND
  \bauthor{\bsnm{Yao},~\bfnm{Jianfeng}\binits{J.}}
(\byear{2021}).
\btitle{Central limit theorem for linear spectral statistics of
  block-Wigner-type matrices}.
\bjournal{arXiv preprint arXiv:2110.12171}.
\end{barticle}
\endbibitem

\bibitem[\protect\citeauthoryear{Wang, Yu and
  Rinaldo}{2021}]{WangYuRinaldo2021}
\begin{barticle}[author]
\bauthor{\bsnm{Wang},~\bfnm{Daren}\binits{D.}},
  \bauthor{\bsnm{Yu},~\bfnm{Yi}\binits{Y.}} \AND
  \bauthor{\bsnm{Rinaldo},~\bfnm{Alessandro}\binits{A.}}
(\byear{2021}).
\btitle{{Optimal covariance change point localization in high dimensions}}.
\bjournal{Bernoulli}
\bvolume{27}
\bpages{554 -- 575}.
\end{barticle}
\endbibitem

\bibitem[\protect\citeauthoryear{Wang et~al.}{2022}]{wangvolgushev}
\begin{barticle}[author]
\bauthor{\bsnm{Wang},~\bfnm{Runmin}\binits{R.}},
  \bauthor{\bsnm{Zhu},~\bfnm{Changbo}\binits{C.}},
  \bauthor{\bsnm{Volgushev},~\bfnm{Stanislav}\binits{S.}} \AND
  \bauthor{\bsnm{Shao},~\bfnm{Xiaofeng}\binits{X.}}
(\byear{2022}).
\btitle{Inference for change points in high-dimensional data via
  selfnormalization}.
\bjournal{The Annals of Statistics}
\bvolume{50}
\bpages{781--806}.
\end{barticle}
\endbibitem

\bibitem[\protect\citeauthoryear{Yin, Zheng and Zou}{2023}]{yin2023central}
\begin{barticle}[author]
\bauthor{\bsnm{Yin},~\bfnm{Yanqing}\binits{Y.}},
  \bauthor{\bsnm{Zheng},~\bfnm{Shurong}\binits{S.}} \AND
  \bauthor{\bsnm{Zou},~\bfnm{Tingting}\binits{T.}}
(\byear{2023}).
\btitle{Central limit theorem of linear spectral statistics of high-dimensional
  sample correlation matrices}.
\bjournal{Bernoulli}
\bvolume{29}
\bpages{984--1006}.
\end{barticle}
\endbibitem

\bibitem[\protect\citeauthoryear{Zhang, Wang and Shao}{2022}]{zhangetal2022}
\begin{barticle}[author]
\bauthor{\bsnm{Zhang},~\bfnm{Yangfan}\binits{Y.}},
  \bauthor{\bsnm{Wang},~\bfnm{Runmin}\binits{R.}} \AND
  \bauthor{\bsnm{Shao},~\bfnm{Xiaofeng}\binits{X.}}
(\byear{2022}).
\btitle{Adaptive Inference for Change Points in High-Dimensional Data}.
\bjournal{Journal of the American Statistical Association}
\bvolume{117}
\bpages{1751-1762}.
\end{barticle}
\endbibitem

\bibitem[\protect\citeauthoryear{Zhang et~al.}{2022}]{zhang2022asymptotic}
\begin{barticle}[author]
\bauthor{\bsnm{Zhang},~\bfnm{Zhixiang}\binits{Z.}},
  \bauthor{\bsnm{Zheng},~\bfnm{Shurong}\binits{S.}},
  \bauthor{\bsnm{Pan},~\bfnm{Guangming}\binits{G.}} \AND
  \bauthor{\bsnm{Zhong},~\bfnm{Ping-Shou}\binits{P.-S.}}
(\byear{2022}).
\btitle{Asymptotic independence of spiked eigenvalues and linear spectral
  statistics for large sample covariance matrices}.
\bjournal{The Annals of Statistics}
\bvolume{50}
\bpages{2205--2230}.
\end{barticle}
\endbibitem

\bibitem[\protect\citeauthoryear{Zheng}{2012}]{zheng2012}
\begin{barticle}[author]
\bauthor{\bsnm{Zheng},~\bfnm{Shurong}\binits{S.}}
(\byear{2012}).
\btitle{Central limit theorems for linear spectral statistics of large
  dimensional $ F $-matrices}.
\bjournal{Annales de l'IHP Probabilit{\'e}s et Statistiques}
\bvolume{48}
\bpages{444--476}.
\end{barticle}
\endbibitem

\bibitem[\protect\citeauthoryear{Zheng, Bai and Yao}{2015}]{zheng_et_al_2015}
\begin{barticle}[author]
\bauthor{\bsnm{Zheng},~\bfnm{Shurong}\binits{S.}},
  \bauthor{\bsnm{Bai},~\bfnm{Zhidong}\binits{Z.}} \AND
  \bauthor{\bsnm{Yao},~\bfnm{Jianfeng}\binits{J.}}
(\byear{2015}).
\btitle{Substitution principle for CLT of linear spectral statistics of
  high-dimensional sample covariance matrices with applications to hypothesis
  testing}.
\bjournal{Annals of Statistics}
\bvolume{43}
\bpages{546--591}.
\end{barticle}
\endbibitem

\end{thebibliography}


\newpage
\setcounter{page}{1}
\renewcommand{\thesection}{A}
\section{Supplementary Material}

\subsection{Auxiliary results for the proof of Theorem \ref{thm_fidis}} \label{holsection}
The convergence of the finite-dimensional distributions is facilitated by the following auxiliary results, whose proofs are postponed to Section \ref{secb}. To begin with, we have a result on the quadratic term appearing in the expansion of the test statistic. 
\begin{lemma}  \label{lem_quad_term_fidis}
As $n\to\infty,$ it holds for $t\in [t_0,1-t_0]$
\begin{align*}
 \sum_{i=1}^p \frac{ \nt{} }{n} \frac{X_{i, 1:\nt{}}}{2}^2 + \frac{ n -\nt{}}{n} \sum_{i=1}^p \frac{X_{i, ( \nt{} +1) :n }^2}{2} - \sum_{i=1}^p  \frac{X_i^2}{2} - \frac{ \breve\sigma_{n,t}^2 }{2} = \op ,
\end{align*}
where  $\breve\sigma^2_{n,t}$ is defined in \eqref{def_sigma_breve}.
\end{lemma}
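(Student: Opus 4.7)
Let $U_i := \frac{\nt{}}{n}\frac{X_{i,1:\nt{}}^2}{2} + \frac{n-\nt{}}{n}\frac{X_{i,\nt{}+1:n}^2}{2} - \frac{X_i^2}{2}$ and $\mathcal{A}_i := \sigma(\bfb_1,\ldots,\bfb_i)$. The plan is to decompose
\begin{align*}
\sum_{i=1}^p U_i - \frac{\breve\sigma_{n,t}^2}{2}
= \underbrace{\sum_{i=1}^p \bigl(U_i - \E[U_i \mid \mathcal{A}_{i-1}]\bigr)}_{=:M_n}
+ \underbrace{\sum_{i=1}^p \E[U_i \mid \mathcal{A}_{i-1}] - \frac{\breve\sigma_{n,t}^2}{2}}_{=:B_n},
\end{align*}
and show that the martingale fluctuation $M_n$ is $\op$ by an $L^2$-bound, while the deterministic bias $B_n$ is $\op$ by a direct computation of conditional second moments of quadratic forms.

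The summands in $M_n$ form a martingale difference array with respect to $(\mathcal{A}_i)$, so Chebyshev reduces the problem to
$\E M_n^2 = \sum_{i=1}^p \E\Var(U_i\mid\mathcal{A}_{i-1})$.
Each conditional variance is a fourth moment of a centred quadratic form in the i.i.d. entries of $\bfb_i$. Since the three matrices $\bfP(i-1)$, $\bfP(i-1;1:\nt{})$ and $\bfP(i-1;\nt{}+1:n)$ are orthogonal projections (so $\tr\bfP^2 = \mathrm{rank}(\bfP)$), Lemma~B.26 of \cite{bai2004}---combined with a standard truncation of the $x_{ji}$ at level $n^\eta$ (small $\eta > 0$) to handle the moment assumption $\E|x_{11}|^{4+\delta} < \infty$---gives $\Var(X_i^2\mid\mathcal{A}_{i-1}) \lesssim (n-i+1)^{-2}$ and analogous bounds for the two sub-sample versions. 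Summation then yields $\E M_n^2 = \mathcal{O}(1/n) \to 0$.

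For $B_n$, apply the identity $\Var(\bfb^\top A \bfb) = 2\tr(A^2) + (\kappa-3)\sum_j A_{jj}^2$ for symmetric $A$ and i.i.d. $\bfb$ with kurtosis $\kappa$. Using $\tr A^2 = \mathrm{rank}(A)$ for the projections,
\begin{align*}
\E[X_i^2 \mid \mathcal{A}_{i-1}] = \frac{2}{n-i+1} + (\kappa-3)\,\frac{\sum_j P_{jj}(i-1)^2}{(n-i+1)^2},
\end{align*}
with analogous expressions for $\E[X_{i,1:\nt{}}^2\mid\mathcal{A}_{i-1}]$ and $\E[X_{i,\nt{}+1:n}^2\mid\mathcal{A}_{i-1}]$. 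The leading pieces aggregate via the harmonic expansion $\sum_{i=1}^p(m-i+1)^{-1} = -\log(1-p/m) + o(1)$ for $m\in\{n,\nt{},n-\nt{}\}$, and after the $\nt{}/n$ and $(n-\nt{})/n$ weightings produce exactly the logarithmic part of $\breve\sigma_{n,t}^2/2$. The kurtosis contribution is controlled by the uniform diagonal concentration
\begin{align*}
\sum_{j} P_{jj}(i-1)^2 = \frac{(n-i+1)^2}{n} + o(n), \qquad 1\le i\le p,
\end{align*}
and the two analogues for the sub-sample projections; this follows from $P_{jj}(i-1) = 1 - \bfx_j^\top(\bfX_{i-1,1:n}\bfX_{i-1,1:n}^\top)^{-1}\bfx_j$, Sherman–Morrison, and the standard quadratic-form concentration $\bfx_j^\top M \bfx_j = \tr M + \op$, giving $P_{jj}(i-1) = (n-i+1)/n + \op$ uniformly in $j$. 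Invoking the consistency $\hat\kappa_n \conp \kappa$ from Lemma \ref{lem_consistency_kappa} allows to substitute $\hat\kappa_n$ for $\kappa$, and the three kurtosis pieces combine to match the $\hat\kappa_n$-dependent summand of $\breve\sigma_{n,t}^2/2$, giving $B_n = \op$.

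The principal obstacle is the uniform diagonal concentration of the three projection families: point-wise in $i$ this is an elementary consequence of the law of large numbers, but uniformity in $1\le i\le p$ (especially for $i-1$ close to the dimension $p$), across all three projections, and under only the moment condition $\E|x_{11}|^{4+\delta}<\infty$, requires sharp leave-one-out estimates in the spirit of Bai–Silverstein. Additional care is needed to calibrate the truncation level in the bound for $M_n$ so that the polynomial blow-up of the eighth moment of truncated variables does not destroy the $\mathcal{O}(1/n)$ estimate.
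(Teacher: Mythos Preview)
Your proposal is correct and follows essentially the same route as the paper. The paper also splits the sum into a martingale part $Q_{n,2,t}=\sum_i\big(A_{i,t}-\E[A_{i,t}\mid\mathcal{A}_{i-1}]\big)$ and a bias part $Q_{n,1,t}=\sum_i\E[A_{i,t}\mid\mathcal{A}_{i-1}]-\breve\sigma_{n,t}^2$, handles the former via the moment inequality of Lemma~\ref{lem_mom_ineq_quad_term_tight} (your $L^2$ bound is a minor variant), and evaluates the latter through the pre-computed quantities $\sigma_1^2$ and $\sigma_2^2$ in \eqref{def_sigma_jk}--\eqref{eq_sigma_2}; your direct computation of $\E[X_i^2\mid\mathcal{A}_{i-1}]$ and the claim $\sum_j P_{jj}(i-1)^2=(n-i+1)^2/n+o_\PR(n)$ are exactly the content of those formulas, with the diagonal concentration corresponding to Lemma~\ref{lem_sigma_2} (itself proved via the arguments of Lemmas~4--5 in \cite{dornemann2023likelihood}).
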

The following result shows that the logarithmic terms are negligible at a $\delta$-dependent rate. It will also be used in Section \ref{sec_proof_tight} when the proving the asymptotic tightness given in Theorem \ref{thm_tight}.
\begin{lemma} \label{lem_mom_ineq_Y}
Assume that \ref{ass_mp_regime} and \ref{ass_mom} with some $\delta>0$ are satisfied.
Then, it holds for all $t\in [t_0,1-t_0]$
\begin{align*}
    \frac{1}{n}\sum_{i=1 }^p  \lb \E \left|  \nt{} Y_{i, 1:\nt{}} \right| 
    + \E \left|  
    (n - \nt{})  Y_{i, ( \nt{} + 1):n } \right| 
    + \E \left| 
    n Y_i  \right| \rb  \lesssim \frac{1}{n^{\delta/4}} , 
  \end{align*}
 where the upper bound does not depend on $t$ and the random variables $Y_i$ and $Y_{i, ( \nt{} + 1):n }$ are defined in  \eqref{def_Y_i} and \eqref{def_Y_i_tilde}, respectively. 
\end{lemma}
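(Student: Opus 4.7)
The three terms inside the bracket are structurally identical: each is a Taylor remainder $Y = \log(1+X) - X + X^2/2$ for a normalized quadratic form $X = (\bfb^\top \bfP \bfb - r)/r$, where $\bfP$ is an orthogonal projection of rank $r$ independent of $\bfb$ and $r$ ranges over $\{n-i+1,\;\nt{}-i+1,\;n-\nt{}-i+1\}$. Assumption~\ref{ass_mp_regime} implies $\nt{}-p \gtrsim n(t_0 - y)$ and $n-\nt{}-p \gtrsim n(1-t_0-y)$ uniformly for $t\in[t_0,1-t_0]$, so all three rank parameters are of order $n$ with constants depending only on $t_0$ and $y$. It therefore suffices to establish the single generic estimate $\sum_{i=1}^p \E|Y| \lesssim n^{-\delta/4}$, which upon multiplication by $r \lesssim n$ and division by $n$ delivers the conclusion.

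The strategy is to combine Taylor's formula with integral remainder,
\begin{align*}
    Y = \int_0^X \frac{(X-t)^2}{(1+t)^3}\, dt ,
\end{align*}
with Lemma~B.26 in \cite{bai2004} applied to $\bfb^\top \bfP \bfb - r$. The Taylor expression gives $|Y| \lesssim |X|^3$ on $\{|X|\leq 1/2\}$ and $|Y| \lesssim |\log(1+X)| + |X| + X^2$ in general, while the Bai-Silverstein quadratic-form inequality together with Assumption~\ref{ass_mom} yields the single quantitative input
\begin{align*}
    \E|X|^{2+\delta/2} \;\lesssim\; r^{-(1+\delta/4)}.
\end{align*}
On the good event $\{|X|\leq 1/2\}$, the bound $|X|^3 \leq (1/2)^{1-\delta/2}|X|^{2+\delta/2}$ (assuming without loss of generality $\delta \leq 2$, since larger $\delta$ only improves the estimate) upgrades the cubic control to $\E|Y|I\{|X|\leq 1/2\} \lesssim r^{-(1+\delta/4)}$. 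On $\{X > 1/2\}$, the inequality $\log(1+X) \leq X$ combined with $I\{X > 1/2\} \leq 2^{1+\delta/2}|X|^{1+\delta/2}$ converts both the log term and the monomial terms to the same rate.

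The delicate contribution is $\E|\log(1+X)|I\{X < -1/2\}$, since $1 + X = \bfb^\top\bfP\bfb/r$ can in principle be arbitrarily close to zero. My plan is Cauchy-Schwarz,
\begin{align*}
    \E|\log(1+X)|I\{X < -1/2\} \;\leq\; \bigl(\E \log^2(1+X)\bigr)^{1/2}\,\bigl(\PR(X < -1/2)\bigr)^{1/2},
\end{align*}
combined with Markov on the same $(2+\delta/2)$-moment to give $\PR(X < -1/2) \lesssim r^{-(2+\delta/2)}$, which is more than enough. The remaining factor $\E \log^2(\bfb^\top \bfP \bfb / r) \lesssim 1$ is the step I expect to be the main obstacle: conditioning on $\bfP$ and writing $\bfb^\top \bfP \bfb = \|U^\top \bfb\|_2^2$ for an $n \times r$ isometry $U$, I would control this via a small-ball estimate for $\|U^\top \bfb\|_2^2$ that exploits the continuity of the distribution of $x_{11}$ and the $(4+\delta)$-moment hypothesis, together with a standard truncation of $x_{11}$ near zero to tame the logarithmic singularity.

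Summing the per-index estimate over $i = 1,\ldots,p$ yields
\begin{align*}
    \sum_{i=1}^p \E|Y| \;\lesssim\; \sum_{r = n-p+1}^n r^{-(1+\delta/4)} \;\lesssim\; p \cdot (n(1-y))^{-(1+\delta/4)} \;\lesssim\; n^{-\delta/4},
\end{align*}
and every constant in the argument depends on the rank $r$ only through the lower bound $r \gtrsim n$ enforced by the restriction $t \in [t_0, 1-t_0]$ and $y < t_0 \wedge (1-t_0)$, which secures the claimed uniformity in $t$.
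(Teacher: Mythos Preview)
The paper omits the proof entirely, deferring to Lemma~3 of \cite{dornemann2023likelihood}. Your overall scheme---third-order Taylor remainder, the quadratic-form moment bound $\E|X|^{2+\delta/2}\lesssim r^{-(1+\delta/4)}$ from Lemma~B.26 in \cite{bai2004}, and the three-way split on $\{|X|\le 1/2\}$, $\{X>1/2\}$, $\{X<-1/2\}$---is the standard route, and your treatment of the first two events is correct and matches what that reference does.

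There is, however, a genuine gap on $\{X<-1/2\}$. Markov's inequality with the $(2+\delta/2)$-moment gives
\[
\PR(X<-1/2)\;\le\;2^{2+\delta/2}\,\E|X|^{2+\delta/2}\;\lesssim\;r^{-(1+\delta/4)},
\]
\emph{not} $r^{-(2+\delta/2)}$ as you write; you have confused the moment order with the resulting power of $r$. With the correct exponent, your Cauchy--Schwarz step yields at best $r^{-(1+\delta/4)/2}$ per index (even granting the unproven claim $\E\log^2(1+X)\lesssim 1$), and summing over $i=1,\dots,p$ produces $p\cdot n^{-(1+\delta/4)/2}\asymp n^{(1-\delta/4)/2}$, which diverges for every $\delta<4$. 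So the argument does not close. The small-ball input is itself not available from the stated hypotheses: continuity of the law of $x_{11}$ alone furnishes no quantitative rate for $\PR(\bfb^\top\bfP\bfb<\varepsilon r)$ as $\varepsilon\downarrow 0$, and Markov returns the \emph{same} bound $r^{-(1+\delta/4)}$ uniformly in $\varepsilon\in(0,1/2)$, so a layer-cake computation on $|\log(1+X)|$ diverges as well. A correct treatment of this event needs a strictly sharper lower-tail input than Markov can provide under $(4+\delta)$ moments---typically obtained in this literature via a preliminary truncation of the entries $x_{ij}$, after which a Bernstein-type inequality makes $\PR(|X|>1/2)$ decay faster than any fixed power of $r$, and a crude bound like $|\log(1+X)|\le |\log(\bfb^\top\bfP\bfb)|+\log r$ on the complement then suffices.
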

In the following lemma, we provide an approximation for $\sigma_2^2$ appearing in \eqref{eq_sigma_2}. 
\begin{lemma}\label{lem_sigma_2}
Suppose that $p < j_1 \leq j_2 \leq k_1 \leq k_2\leq n$ such that $k_2 - j_2 + 1 = (k_1 - j_1 + 1 ) \vee (k_2 - j_2 + 1) $. 
   It holds
   \begin{align*}
         \sigma_2^2(j_1, k_1, j_2, k_2) 
        & = y \frac{k_1 - j_1 + 1 }{n} + \op,
   \end{align*}
        Moreover, we have for $t_0 \leq t_1 < t_2 \leq t_0$
        \begin{align*}
             \sigma_2^2(\nt{1}+1, n , 1, \nt{2}  ) & = y (t_2 - t_1) + \op.
        \end{align*}
\end{lemma}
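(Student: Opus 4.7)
The plan is to evaluate the Hadamard product trace $\tr(\bfP_1 \odot \bfP_2)$ appearing inside the defining sum for $n^2 \sigma_2^2$, where $\bfP_l$ denotes the submatrix of $\bfP(i-1; j_l:k_l)$ on the overlap range $\mathcal{I} = (j_1 \vee j_2):(k_1 \wedge k_2)$, with $m := |\mathcal{I}|$ and $m_l := k_l - j_l + 1$ for $l = 1, 2$. The identity $\tr(A \odot B) = \sum_k A_{kk} B_{kk}$ immediately reduces this to $\sum_{k \in \mathcal{I}} (\bfP_1)_{kk} (\bfP_2)_{kk}$.

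First I would establish pointwise concentration of the diagonal entries. Via the Sherman--Morrison identity, each entry can be written as $(\bfP_l)_{kk} = 1/(1 + Q_l^{(k)})$, where $Q_l^{(k)}$ is a quadratic form in the $k$-th column of $\bfX_{i-1, j_l:k_l}$ with kernel the inverse of the leave-one-out Gram matrix of the remaining columns. Under the size condition of the lemma (which, together with \ref{ass_mp_regime}, guarantees $m_l - i + 1 \gtrsim n$ uniformly in $i \le p$), the leave-one-out Gram has spectrum bounded away from zero and infinity with overwhelming probability, and the trace lemma (Lemma~B.26 in \cite{bai2004}) applied conditionally on it then yields
$$
(\bfP_l)_{kk} = \frac{m_l - i + 1}{m_l} + o_{\PR}\!\bigl(n^{-1/2}\bigr),
$$
uniformly in $k \in \mathcal{I}$ and $l \in \{1, 2\}$.

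Summing this over $k \in \mathcal{I}$ gives $\tr(\bfP_1 \odot \bfP_2) = m(m_1 - i + 1)(m_2 - i + 1)/(m_1 m_2) + (\text{error})$, and the prefactor $m_1 m_2/[(m_1-i+1)(m_2-i+1)]$ appearing in the defining sum for $n^2 \sigma_2^2$ then exactly cancels the deterministic piece, so that
$$
n^2 \sigma_2^2 \;=\; \sum_{i=1}^p m + o_{\PR}(n^2) \;=\; pm + o_{\PR}(n^2),
$$
whence $\sigma_2^2 = ym/n + o_{\PR}(1)$. For the first conclusion of the lemma, the applications in \eqref{eq_sigma_2} all have $j_1 = j_2$ so $m = m_1 = k_1 - j_1 + 1$ as claimed; for the second, $m = \nt{2} - \nt{1}$ and $m/n \to t_2 - t_1$.

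The hard part will be getting the remainder to the required order $o_{\PR}(n^2)$. Naively combining $m = O(n)$ per-entry fluctuations of size $O_{\PR}(n^{-1/2})$ with the $p$ outer summands (and bounded prefactor) gives only an $O_{\PR}(p\sqrt n)$ bound, too coarse by a factor of $\sqrt n$. The gain one must exploit is that, conditional on the shared columns $\bfX_{i-1, \mathcal{I}}$, the centred diagonal fluctuations of $(\bfP_1)_{kk}$ are (approximately) uncorrelated across $k \in \mathcal{I}$, and similarly for $\bfP_2$, so a second-moment bound buys back a $\sqrt m$ factor. Combined with the standard truncation and moment-matching arguments employed elsewhere in the proof of Theorem~\ref{thm_fidis}, this should deliver the needed bound and complete the proof.
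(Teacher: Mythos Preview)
Your approach is essentially the paper's: both recognise that the Hadamard trace reduces to $\sum_{k}(\bfP_1)_{kk}(\bfP_2)_{kk}$ and that each diagonal entry concentrates around $(m_l-i+1)/m_l$, so that after the prefactor cancellation one is left with $pm+\text{(error)}$. The paper phrases this as showing that the centred Hadamard trace
\[
\tr\Bigl\{\Bigl(\tfrac{\bfP_1}{m_1-i+1}-\tfrac{1}{m_1}\bfI\Bigr)\odot\Bigl(\tfrac{\bfP_2}{m_2-i+1}-\tfrac{1}{m_2}\bfI\Bigr)\Bigr\}
\]
is $o_\PR(1)$ when summed over $i$, and defers the actual fluctuation bound to Lemmas~4 and~5 of \cite{dornemann2023likelihood}; your Sherman--Morrison plus trace-lemma route is a more self-contained presentation of the same computation.

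Your final paragraph, however, is over-cautious: the naive bound you quote, $O_\PR(p\sqrt n)=O_\PR(n^{3/2})$, \emph{is} already $o_\PR(n^2)$, and $o_\PR(n^2)$ is exactly what you need for the error in $n^2\sigma_2^2$. No second-moment or decorrelation argument is required; a first-moment bound (using exchangeability to get $\E[(\bfP_l)_{kk}]=(m_l-i+1)/m_l$ exactly, together with $\E|(\bfP_l)_{kk}-(m_l-i+1)/m_l|=O(n^{-1/2})$ from the trace lemma, uniformly in $k$ and $i$) already closes the argument. You seem to have mislaid a factor of $n$ when comparing the naive bound to the target.
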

We conclude this section by an approximation of $\sigma_1^2$ defined below \eqref{def_sigma_jk}.
\begin{lemma} \label{lem_cov_term}
If $t_1 < t_2$, then we have 
\begin{align*}
    \sigma_1^2(\nt{1}+1, n , 1, \nt{2}  ) = - 2 (1-t_1)t_2 \log  \lb 1 - \frac{(t_2 - t_1) y}{(1-t_1)t_2 } \rb  + \op.
\end{align*}
\end{lemma}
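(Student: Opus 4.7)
The intervals $[\nt{1}+1, n]$ and $[1, \nt{2}]$ overlap but neither contains the other (since $t_0 < t_1 < t_2 < 1 - t_0$), so the identity used to establish \eqref{formula_sigma} is not available here. The plan is to directly compute the trace appearing in the definition of $\sigma_1^2(\nt{1}+1, n, 1, \nt{2})$ and then substitute into the defining sum.

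Set $m_A = n - \nt{1}$, $m_B = \nt{2}$, $m_C = \nt{2} - \nt{1}$, $m_D = n - \nt{2}$ and $m_E = \nt{1}$, and introduce the mutually independent data blocks $X_C := \bfX_{i-1, \nt{1}+1:\nt{2}}$, $X_{A,2} := \bfX_{i-1, \nt{2}+1:n}$ and $X_{B,1} := \bfX_{i-1, 1:\nt{1}}$, so that $X_A = \bfX_{i-1, \nt{1}+1:n} = [X_C \mid X_{A,2}]$ and $X_B = \bfX_{i-1, 1:\nt{2}} = [X_{B,1} \mid X_C]$. Writing $M_A = (X_A X_A^\top)^{-1}$, $M_B = (X_B X_B^\top)^{-1}$, $W_A = X_{A,2} X_{A,2}^\top$ and $W_B = X_{B,1} X_{B,1}^\top$, the two restricted projections take the form $\bfI_{m_C} - X_C^\top M_A X_C$ and $\bfI_{m_C} - X_C^\top M_B X_C$. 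Using the Schur-complement-type identity $M_A X_C X_C^\top = \bfI - M_A W_A$ (which follows at once from $X_A X_A^\top = X_C X_C^\top + W_A$ and $M_A X_A X_A^\top = \bfI$) and its analogue for $B$, a direct expansion of the trace of the product telescopes to
\begin{align*}
\tr\lb \bfP^{\nt{1}+1:\nt{2}}(i-1; \nt{1}+1:n)\, \bfP^{\nt{1}+1:\nt{2}}(i-1; 1:\nt{2}) \rb = m_C - (i-1) + \tr(M_A W_A M_B W_B).
\end{align*}

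The decisive step is to evaluate the cross trace $\tr(M_A W_A M_B W_B)$ asymptotically. The crucial structural fact is that $X_C$, $X_{A,2}$ and $X_{B,1}$ correspond to disjoint sample indices, so they are mutually independent; in particular, $W_A$ and $W_B$ are conditionally independent given $Q := X_C X_C^\top$. Conditioning on $Q$ and applying the Sherman--Morrison--Woodbury identity reduces the cross trace to expressions involving the resolvents of $Q + W_A$ and $Q + W_B$, which can be handled via deterministic-equivalent results for sample-covariance-matrix resolvents (as used in \cite{bai2004} and \cite{zheng_et_al_2015}) combined with the Marchenko--Pastur asymptotics for $Q$, $W_A$ and $W_B$; concentration around the expectation follows from the standard quadratic-form tail bound in Lemma B.26 of \cite{bai2004}. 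The resulting asymptotic is of the form $\tr(M_A W_A M_B W_B) = (i-1) m_D m_E / (m_A m_B - m_C(i-1)) + \op$.

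Finally, substituting the asymptotic trace into the defining sum for $\sigma_1^2$ and exploiting the algebraic identity $m_A m_B - m_C i - m_D m_E = m_C(n - i)$ (which is immediate from $m_A m_B = (m_C + m_D)(m_C + m_E)$ and $m_C + m_D + m_E = n$), the summation converts into a Riemann integral in $u = i/n \in (0, y)$ with integrand $2(1-t_1) t_2 (t_2 - t_1)/((1-t_1) t_2 - (t_2 - t_1) u)$, which integrates to the claimed expression $-2(1-t_1) t_2 \log(1 - (t_2 - t_1) y /((1-t_1) t_2))$. The principal obstacle is the rigorous derivation of the cross-trace asymptotic: because $M_A$ and $M_B$ are dependent inverse Wishart matrices coupled through the shared factor $Q$, the fourth moment is not accessible by first-moment considerations alone, and a careful analysis of the joint resolvent, for instance via the QR framework of \cite{wang2018} or the Marchenko--Pastur fixed-point equation, is required.
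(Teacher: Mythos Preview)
Your decomposition is correct and in fact cleaner than the paper's. The identity
\[
\tr\!\Big( \bfP^{\nt{1}+1:\nt{2}}(i-1;\nt{1}+1:n)\,\bfP^{\nt{1}+1:\nt{2}}(i-1;1:\nt{2}) \Big) \;=\; m_C - (i-1) + \tr(M_A W_A M_B W_B)
\]
collapses in one step what the paper obtains as $m_C - S_{i-1,1} - S_{i-1,2} + S_{i-1,3}$ with $S_{i,1}=\tr(M_B X_C X_C^\top)$, $S_{i,2}=\tr(M_A X_C X_C^\top)$ and $S_{i,3}=\tr(M_A X_C X_C^\top M_B X_C X_C^\top)$; your Schur-complement move $M_A X_C X_C^\top = \bfI - M_A W_A$ is exactly what makes the telescoping happen. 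Your cross-trace asymptotic and your algebraic identity $(m_A-i)(m_B-i)=(m_A m_B - m_C i) - i(n-i)$ then produce the summand $m_A m_B m_C/(m_A m_B - m_C i)$ directly, whereas the paper arrives at the same Riemann sum only after separately evaluating $S_{i,1}$, $S_{i,2}$, $S_{i,3,1}$, $S_{i,3,2}$ and recombining (their $\tau_{0,n}+\tau_{3,2,n}$).

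That said, the entire difficulty of the lemma sits inside your one-line claim
$\tr(M_A W_A M_B W_B) = (i-1)\,m_D m_E/(m_A m_B - m_C(i-1)) + \op$,
and here your proposal is only a sketch. Two concrete points. First, the remainder cannot be $\op$: the leading term is of order $n$, so you need $o_\PR(n)$ uniformly in $1\le i\le p$ for the Riemann-sum step to go through. Second, the conditional independence of $W_A$ and $W_B$ given $Q$ does not by itself deliver the trace, because $\tr(M_A W_A M_B W_B)$ is a trace of a product of matrices rather than a product of scalars; you still need an approximation for $M_A W_A$ (or equivalently for $(Q+W_A)^{-1}$ given $Q$) that is accurate in a matrix sense. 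The paper does not use deterministic equivalents here at all: it computes $S_{i,3,2}$ by repeatedly applying the rank-one Sherman--Morrison formula and the quadratic-form approximation \eqref{eq_mean_beta} to peel off columns, and then invokes a separate auxiliary lemma that evaluates $\tfrac{1}{n}\tr\big(\bfS_{i,(\nt{1}+1):n}^{-1}\bfS_{i,1:\nt{2}}^{-1}\big)$ (i.e., $n\,\tr(M_A M_B)$) via the martingale-type decomposition of \cite{diss,dornemann2024detecting}. Your suggested route through resolvent deterministic equivalents is plausible, but it is not what the paper does and you would need to supply the argument, including the uniformity in $i$; as it stands, this is the genuine gap.
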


\subsection{Proof of Theorem \ref{thm_tight}  - asymptotic tightness}
\label{sec_proof_tight}

We need the following auxiliary results, whose proofs are provided in Section \ref{sec_proofs_aux_results_tight}. To begin with, we investigate the increments of the contributing random part of $\log\Lambda_{n,t}$, which is shown to satisfy a finite-dimensional CLT in the proof of Theorem \ref{thm_fidis}.  
\begin{lemma}\label{lem_mom_ineq_X}
Let Assumption  \ref{ass_mp_regime} and \ref{ass_mom} with some $\delta>0$ be  satisfied and 
     let $t_1,t_2 \in [t_0,1-t_0]$ and $D_{i,j}$ be defined as in \eqref{def_Dij} for $j\in \{1,2\}, ~ 1 \leq i \leq p.$ Then, there exists random variables $Z_1=Z_{1,n}(t_1,t_2), Z_2 = Z_{2,n}(t_1,t_2) $  such that
     \begin{align*}
         \frac{1}{n}\sum\limits_{i=1}^p \lb D_{i,1} - D_{i,2} \rb 
         = Z_1 + Z_2
     \end{align*}
     and
     \begin{align*}
         \E [ Z_1 ^2] & \lesssim \left| \frac{\nt{1} - \nt{2} }{n} \right|^{1+d} \\  
          \E [ | Z_2 | ^{2+\delta/2} ]  & \lesssim \left| \frac{\nt{1} - \nt{2} }{n} \right|^{1+d},
     \end{align*}
     for some $d>0.$ 
\end{lemma}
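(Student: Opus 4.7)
The strategy is to exhibit $Z_2$ as the leading martingale-difference sum whose $(2+\delta/2)$-th moment is controlled via Rosenthal–Burkholder together with Bai–Silverstein's Lemma B.26 for concentration of quadratic forms, and to put into $Z_1$ a low-order ``correction'' term whose $L^2$-bound follows by direct variance computation. Throughout I assume WLOG that $\nt{1} \leq \nt{2}$ and write $\Delta = (\nt{2} - \nt{1})/n$.

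First I would rewrite each contribution by isolating the centered quadratic form $q_{i, j:k} := \bfb_{i,j:k}^\top \bfP(i-1; j:k)\bfb_{i,j:k} - (k-j-i+1)$, so that
\begin{align*}
D_{i,j} = \frac{\nt{j}}{\nt{j}-i+1}\, q_{i, 1:\nt{j}} + \frac{n-\nt{j}}{n-\nt{j}-i+1}\, q_{i,(\nt{j}+1):n} - \frac{n}{n-i+1}\, q_{i, 1:n}.
\end{align*}
Using the block decomposition $\bfb_i = (\bfb_{i, 1:\nt{1}}, \bfb_{i, \nt{1}+1:\nt{2}}, \bfb_{i, \nt{2}+1:n})$ and the corresponding block structure of $\bfP(i-1;\cdot:\cdot)$, one may express $D_{i,1} - D_{i,2}$ as a linear combination of quadratic forms restricted to the incremental block $\bfb_{i,\nt{1}+1:\nt{2}}$, plus cross terms whose coefficient matrices are submatrices $\bfP^{j':k'}(i-1;j:k)$ as in \eqref{eq_tr_prod_p}. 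The key point is that the deterministic trace parts after cancellation vanish, leaving only centered quadratic forms whose ambient dimensions are bounded by $\nt{2}-\nt{1}$.

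I would then set $Z_2 := \frac{1}{n} \sum_{i=1}^{p} \Delta_i$, where $\Delta_i$ denotes the main centered part of $D_{i,1} - D_{i,2}$ (keeping the factors $\nt{j}/(\nt{j}-i+1) \approx 1$ replaced by $1$), and put the residual into $Z_1$. The sequence $(\Delta_i/n)_{1\leq i \leq p}$ is a martingale-difference scheme with respect to $(\mathcal{A}_i)$. By the Burkholder–Rosenthal inequality together with Bai–Silverstein's Lemma B.26 for centered quadratic forms, applied with the $(4+\delta)$-th moment hypothesis \ref{ass_mom},
\begin{align*}
\E|Z_2|^{2+\delta/2} \lesssim \Big(\sum_{i=1}^p \E\bigl[\E[\Delta_i^2 \mid \mathcal{A}_{i-1}]\bigr]/n^2\Big)^{1+\delta/4} + \sum_{i=1}^p \E|\Delta_i/n|^{2+\delta/2}.
\end{align*}
Using formula (9.8.6) of \cite{bai2004} exactly as in the covariance calculation of Theorem \ref{thm_fidis}, the conditional variance sum telescopes to $\sigma(t_1,t_1) + \sigma(t_2,t_2) - 2\sigma(t_1,t_2) + o(1)$, which Taylor-expands to $O(\Delta)$. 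The second term is of smaller order $p \cdot n^{-(2+\delta/2)} (\nt{2}-\nt{1})^{1+\delta/4} \lesssim \Delta^{1+\delta/4}$ again by Lemma B.26. Therefore $\E|Z_2|^{2+\delta/2} \lesssim \Delta^{1+\delta/4}$, and one takes $d = \delta/4$.

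The complementary term $Z_1$ consists of boundary corrections arising from the replacement $\nt{j}/(\nt{j}-i+1) \to 1$ and from rewriting of the trace pieces; each such contribution comes with an extra factor $(i-1)/(\nt{j}-i+1)$ or equivalent, and the corresponding conditional variance is smaller by a factor of $p/n$ or better. A direct variance computation, analogous to but simpler than the one just carried out for $Z_2$, then yields $\E[Z_1^2] \lesssim \Delta^{1+d'}$ for any $d' \leq d$. The main obstacle is the bookkeeping in the block decomposition of Step~1 and the verification that the leading-order variance of $Z_2$ scales as $\Delta$ without an unwanted $\log(1/\Delta)$ factor; if such a logarithm does appear, it can be absorbed into $d$ by choosing $d < \delta/4$ strictly. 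A secondary point is ensuring that Bai–Silverstein's concentration is applied with enough moments, for which the hypothesis $\delta > 0$ in \ref{ass_mom} (used here in the form of at least $4+\delta$ moments via the standing hypothesis for Theorem \ref{thm_tight}) is sufficient.
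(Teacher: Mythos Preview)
Your proposal has a genuine gap in both pieces of the decomposition.

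For $Z_2$: you bound the conditional-variance sum by passing to the limit $\sigma(t_1,t_1)+\sigma(t_2,t_2)-2\sigma(t_1,t_2)+o(1)$ and then Taylor-expand the limit to $O(\Delta)$. But the $o(1)$ error in that approximation (coming from the harmonic-sum and trace approximations in the proof of Theorem~\ref{thm_fidis}) is uniform in $t_1,t_2$ and does \emph{not} shrink with $\Delta$. For increments with $\Delta\sim 1/n$ this is fatal: you would only obtain $\sum_i\E[\Delta_i^2]/n^2=o(1)$, not $O(\Delta)$, and hence no exponent strictly larger than~$1$. Tightness requires a non-asymptotic bound valid for every pair $(t_1,t_2)$.

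For $Z_1$: the correction factors $(i-1)/(\nt{j}-i+1)$ are $O(1)$ uniformly in $1\le i\le p$ (indeed they range up to $\approx y/(t_0-y)$), \emph{not} $O(p/n)$. After writing the correction as
\[
\tfrac{i-1}{\nt{1}-i+1}\bigl(q_{i,1:\nt{1}}-q_{i,1:\nt{2}}\bigr)+(i-1)\Bigl(\tfrac{1}{\nt{1}-i+1}-\tfrac{1}{\nt{2}-i+1}\Bigr)q_{i,1:\nt{2}}+\cdots,
\]
the first summand has conditional variance of order $\nt{2}-\nt{1}$, so your $Z_1$ has $\E[Z_1^2]\asymp\Delta$, not $\Delta^{1+d'}$.

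The paper's argument avoids both pitfalls by an elementary algebraic split with no approximation: writing
\[
\nt{1}X_{i,1:\nt{1}}-\nt{2}X_{i,1:\nt{2}}=(\nt{1}-\nt{2})X_{i,1:\nt{1}}+\nt{2}\bigl(X_{i,1:\nt{1}}-X_{i,1:\nt{2}}\bigr)
\]
(and the analogous identity for the right tail), one puts the pieces with the explicit $(\nt{1}-\nt{2})$ prefactor into $Z_1$, giving $\E[Z_1^2]\lesssim\Delta^2$ immediately. The remaining pieces go into $Z_2$; the point is that $X_{i,1:\nt{1}}-X_{i,1:\nt{2}}$ is a single centered quadratic form in $\bfb_{i,1:\nt{1}\vee\nt{2}}$ with coefficient matrix
$\bfP(i-1;1:\nt{1})/(\nt{1}-i+1)-\tilde\bfP(i-1;1:\nt{2})/(\nt{2}-i+1)$,
whose squared Frobenius norm is computed \emph{exactly} (using $\tr\bigl(\bfP^{1:\nt{2}}(i-1;1:\nt{1})\bfP(i-1;1:\nt{2})\bigr)=\nt{2}-i+1$) to be $|\nt{1}-\nt{2}|/[(\nt{1}-i+1)(\nt{2}-i+1)]$. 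Feeding this directly into Lemma~B.26 and a Rosenthal-type martingale inequality yields $\E|Z_2|^{2+\delta/2}\lesssim\Delta^{1+\delta/4}$ with no passage to a limit. What makes this work is keeping the normalization $1/(\nt{j}-i+1)$ in the $X_{i,\cdot}$'s---precisely what your ``replace the factors by $1$'' step discards.
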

Next, we need a uniform result on the quadratic terms, which is provided in the next lemma.
\begin{lemma}\label{lem_mom_ineq_quad_term_tight}
 If Assumption \ref{ass_mp_regime} and \ref{ass_mom} with some $\delta>4$ are satisfied, then there exist random variables $Q_{n,1,t}$ and $Q_{n,2,t}$ with  
\begin{align}
    Q_{n,1,t} + Q_{n,2,t} 
    = \sum_{i=1}^p \frac{ \nt{} }{n} \frac{X_{i, 1:\nt{}}}{2}^2 + \frac{ n -\nt{}}{n} \sum_{i=1}^p \frac{X_{i, ( \nt{} +1) :n }^2}{2} - \sum_{i=1}^p  \frac{X_i^2}{2} - \frac{ \breve\sigma_{n,t}^2 }{2} ,
    \label{decomposition_Q}
\end{align}
such that  $(Q_{n,1,t})$ is asymptotically tight in $\ell^\infty ( [t_0, 1-t_0])$ and $(Q_{2,n,t})$ satisfies the moment inequality
\begin{align} \label{mom_ineq_Q2}
   \sup_{t\in [t_0, 1 -t_0]}  \E | Q_{2,n,t} |^{2 + \delta / 4} \lesssim \frac{1}{n^{1+\delta/8}}.
\end{align}
\end{lemma}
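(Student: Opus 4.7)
The plan is to split each quadratic term $X_{i,j:k}^{2}$ on the right-hand side of \eqref{decomposition_Q} into its conditional expectation given $\mathcal{A}_{i-1}$ and a martingale deviation. By formula (9.8.6) of \cite{bai2004},
$$
\E[X_{i,j:k}^{2}\mid\mathcal{A}_{i-1}]
= \frac{(\kappa-3)\sum_{l}P_{ll}(i-1;j:k)^{2}+2\,r_{i,j:k}}{r_{i,j:k}^{2}},
$$
where $r_{i,j:k}$ denotes the rank of $\bfP(i-1;j:k)$. I would let $Q_{n,2,t}$ be the sum over $i\leq p$ and over $(j,k)\in\{(1,\nt{}),(\nt{}+1,n),(1,n)\}$ of the weighted martingale differences $X_{i,j:k}^{2}-\E[X_{i,j:k}^{2}\mid\mathcal{A}_{i-1}]$ (with weights $\nt{}/(2n)$, $(n-\nt{})/(2n)$, and $-1/2$ respectively), and $Q_{n,1,t}$ be the remaining predictable part after subtracting $\breve\sigma_{n,t}^{2}/2$.

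For $Q_{n,2,t}$ the Burkholder-Rosenthal inequality applied to the martingale in $i$ gives
$$
\E|Q_{n,2,t}|^{2+\delta/4}
\lesssim
\E\Big(\sum_{i}\E[M_{i}^{2}\mid\mathcal{A}_{i-1}]\Big)^{1+\delta/8}
+\sum_{i}\E|M_{i}|^{2+\delta/4}.
$$
Lemma B.26 of \cite{bai2004}, combined with the moment assumption $\E|x_{11}|^{4+\delta}<\infty$ with $\delta>4$ (needed to control the $(4+\delta/2)$-th moments of the quadratic forms entering here), yields $\E|X_{i,j:k}^{2}|^{2+\delta/4}\lesssim r_{i,j:k}^{-(2+\delta/4)}$. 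Since all ranks $r_{i,j:k}$ are of order $n$ under Assumption \ref{ass_mp_regime} with $t\in[t_{0},1-t_{0}]$, the two contributions are of order $n^{-1-\delta/8}$ and $n^{-1-\delta/4}$, respectively, establishing \eqref{mom_ineq_Q2} uniformly in $t$.

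For $Q_{n,1,t}$, I would verify asymptotic tightness in $\ell^{\infty}([t_{0},1-t_{0}])$; in fact, the stronger statement $Q_{n,1,t}=o_{\PR}(1)$ uniformly in $t$ should hold. A partial-harmonic-sum expansion gives $2\sum_{i}r_{i,j:k}^{-1}=-2\log(1-p/(k-j+1))+O(1/n)$, so after recombining the three index pairs the deterministic $2\,r_{i,j:k}$ contributions of the conditional means exactly match the logarithmic terms in $\breve\sigma_{n,t}^{2}$ up to $O(1/n)$ uniformly in $t$. The residual is an $(\hat\kappa_{n}-3)$-weighted random sum $\sum_{i}\sum_{l}P_{ll}(i-1;j:k)^{2}/r_{i,j:k}^{2}$ minus the deterministic $(\hat\kappa_{n}-3)p/(2n)$, which by Lemma \ref{lem_consistency_kappa} and standard concentration estimates for $\sum_{l}P_{ll}^{2}$ in the Marchenko-Pastur regime is $o_{\PR}(1)$ pointwise, with fluctuations controlled by a telescoping martingale argument in $i$ analogous to Lemma \ref{lem_mom_ineq_X}. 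Since $\bfP(i-1;1:\nt{})$ depends on $t$ only through the integer $\nt{}$, the process $Q_{n,1,t}$ is piecewise constant in $t$, so uniform control reduces to summing the moments of the single-step perturbations and applying a union bound over the $O(n)$ distinct values of $\nt{}$.

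The principal obstacle is the uniform-in-$t$ control of the random part of $Q_{n,1,t}$: each jump of $\nt{}$ corresponds to a rank-one update of $\bfP(i-1;1:\nt{})$ for every $i$, and the perturbations of the diagonal self-products $\sum_{l}P_{ll}^{2}/r_{i,j:k}^{2}$ must be summed over $i\leq p$ without degrading the $o_{\PR}(1)$ rate, which requires careful bookkeeping of concentration estimates together with rank-one update identities. The martingale moment bound for $Q_{n,2,t}$ is by contrast essentially sharp and already exploits the stronger hypothesis $\delta>4$ through the moment of quadratic forms entering Lemma B.26.
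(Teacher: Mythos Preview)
Your decomposition into a predictable part $Q_{n,1,t}$ and a martingale part $Q_{n,2,t}$ is exactly what the paper does, and your treatment of $Q_{n,2,t}$ via a Burkholder--Rosenthal inequality together with Lemma~B.26 in \cite{bai2004} matches the paper's argument (which invokes Lemma~2.2 of \cite{li2003}, a cruder martingale moment bound yielding the same rate $n^{-1-\delta/8}$).

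Where you diverge is in the handling of $Q_{n,1,t}$. You aim for the stronger statement $Q_{n,1,t}=o_{\PR}(1)$ uniformly in $t$, and this is what produces the ``principal obstacle'' you describe: controlling the rank-one perturbations of $\sum_{l}P_{ll}(i-1;j:k)^{2}/r_{i,j:k}^{2}$ uniformly over the $O(n)$ values of $\nt{}$. The paper sidesteps this entirely. It only needs asymptotic tightness of $(Q_{n,1,t})_{t}$, not convergence to zero, and obtains this from the almost sure uniform bound
\[
\Big|\sum_{i=1}^{p}\E[A_{i,t}\mid\mathcal{A}_{i-1}]\Big|\lesssim \sum_{i=1}^{p}\frac{1}{r_{i,1:\nt{}}}+\sum_{i=1}^{p}\frac{1}{r_{i,(\nt{}+1):n}}+\sum_{i=1}^{p}\frac{1}{r_{i,1:n}}\lesssim 1,
\]
which follows because $\sum_{l}P_{ll}^{2}\le\tr P=r$ in your formula for the conditional second moment, and the resulting harmonic sums are bounded uniformly in $t\in[t_{0},1-t_{0}]$ under Assumption~\ref{ass_mp_regime}. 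Together with $\sup_{t,n}|\breve\sigma_{n,t}^{2}|\lesssim 1$, this bounds $Q_{n,1,t}$ uniformly in $(t,n)$ almost surely, which the paper takes as sufficient for tightness in $\ell^{\infty}$. Your route would also work and in fact yields more (the uniform $o_{\PR}(1)$ is used elsewhere, cf.\ Lemma~\ref{lem_quad_term_fidis} pointwise), but the obstacle you flag is entirely avoidable for the lemma as stated.
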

Finally, we recall Lemma \ref{lem_mom_ineq_Y} given in Section \ref{holsection} on the logarithmic terms. 
Using these auxiliary results, we are in the position to give a proof of Theorem \ref{thm_tight}. 
\begin{proof}[Proof of Theorem \ref{thm_tight}]
By Lemma \ref{lem_mom_ineq_quad_term_tight} and \eqref{stirling}, it suffices to show that 
$\{ L_{n,t_1}\}_{t_1 \in [t_0,1-t_0]}$  with 
$$
     L_{n,t_1}
     : = \frac{1}{n} \sum_{i=1}^p D_{i,1} - Q_{2,n,t_1} +  \sum_{i=1 }^p \frac{\nt{1}}{n} Y_{i, 1:\nt{1}} 
  + a_j \sum_{i=1 }^p \frac{n - \nt{1}}{n}  Y_{i, ( \nt{1} + 1):n }  -   \sum_{i=1}^p Y_i 
$$
is asymptotically tight. 
    We write for $t_1,t_2\in [t_0,1-t_0]$
    \begin{align*}
    L_{n,t_1} - L_{n,t_2} 
        = Z_{1,n} (t_1,t_2) + Z_{2,n} (t_1,t_2)
        + R_n(t_1) + R_n(t_2) -   Q_{2,n,t_1} + Q_{2,n,t_2},
    \end{align*}
    where $Z_{1,n}(t_1,t_2), Z_{2,n}(t_1,t_2)$ are 
    the random variables in Lemma \ref{lem_mom_ineq_X}, and 
    \begin{align*}
        n R_n(t_1) 
         & =  \sum_{i=1 }^p \nt{1} Y_{i, 1:\nt{1}} 
  + \sum_{i=1 }^p (n - \nt{1})  Y_{i, ( \nt{1} + 1):n }  -  n \sum_{i=1}^p Y_i
    .
    \end{align*}
    For analyzing the increments of $(L_{n,t})$ 
     we define for $t_0 \leq r \leq s \leq t \leq 1 - t_0 $
    \begin{align*}
        m(r,s,t) = \min \{ |  L_{n,s} - L_{n,t} | , |  L_{n,r} - L_{n,s} | \}.
    \end{align*}
  Note that under the moment assumption \ref{ass_mom} with $\delta >4 $, we have by Lemma \ref{lem_mom_ineq_Y} and Lemma \ref{lem_mom_ineq_quad_term_tight}
    \begin{align} \label{mom_ineq_Q_Y}
        \sup_{t\in [t_0,1-t_0]} \lb \E | Q_{2,n,t} | ^{2+ \delta / 4} \vee \E | R_n(t)| \rb \lesssim \frac{1}{n^{1+d}}
    \end{align}
    for some $d>0$, which may be chosen such that it coincides with the $d>0$ from Lemma \ref{lem_mom_ineq_X}. Note that if $t -r < 1/n$, we have $\lfloor nr \rfloor = \lfloor ns \rfloor$ or $\lfloor ns \rfloor = \lfloor nt \rfloor$, and thus, $m(r,s,t) = 0$ almost surely. 
   If $t-r \geq 1/n,$ it holds for all $\lambda >0$ by Lemma \ref{lem_mom_ineq_X} and \eqref{mom_ineq_Q_Y},
    \begin{align} 
        & \PR ( m(r,s,t) > \lambda ) \nonumber \\ & \lesssim   \E | Z_{1,n}(s,t) |^2 
        + \E | Z_{1,n}(r,s) |^2 
        + \E | Z_{2,n}(s,t) |^{2+\delta/2} 
        + \E | Z_{2,n}(r,s) |^{2 + \delta/2} 
        \nonumber \\ 
        & \quad + \sup_{t\in [t_0,1-t_0]} \lb \E | Q_{2,n,t} | ^{2+ \delta / 4}  + \E | R_n(t)| \rb 
        \nonumber \\ 
        &  \lesssim \lb  \frac{\nt{} - \lfloor ns \rfloor }{n} \rb ^{1+d}
        + \lb \frac{ \lfloor ns \rfloor - \lfloor nr \rfloor }{n} \rb ^{1+d}
      + \frac{1}{n^{1+d}} 
      \lesssim \lb t -r + \frac{1}{n}\rb^{1+d} +  ( t -r ) ^{1+d} \nonumber \\
        & \lesssim  ( t -r ) ^{1+d}. \label{incr_ineq1}
    \end{align}

    Similarly, we get 
    \begin{align} \label{incr_ineq2}
        \PR ( | L_{n,t} - L_{n,s} | > \lambda ) 
        \lesssim   \lb  t -s + \frac{1}{n} \rb  ^{1+d} + \frac{1}{n^{1+d}}.
    \end{align}
    Define 
    \begin{align*}
        K_j = \left[ \frac{j-1}{m}, \frac{j}{m} \right], \quad \lfloor mt_0 \rfloor \leq j \leq \lfloor m ( 1 -t_0) \rfloor , \quad m \in \N.
    \end{align*}
    Combining \eqref{incr_ineq1} and \eqref{incr_ineq2} with Corollary A.4 in \cite{tomecki}, we have for $\lfloor mt_0 \rfloor \leq j \leq \lfloor m ( 1 -t_0) \rfloor$
    \begin{align} \label{tight}
        \PR \lb \sup_{t_1,t_2 \in K_j} \left| L_{n,t_1} - L_{n,t_2} \right| > \lambda \rb 
        \lesssim \frac{1}{m^{1+d}} + \lb \frac{1}{n} + \frac{1}{m} \rb ^{1+d}+ \frac{1}{n^{1+d}}. 
    \end{align}
    This implies
\begin{align*}
    \limsup_{n\to\infty} \PR \lb \sup_{ \lfloor mt_0 \rfloor \leq j \leq m} \sup_{s,t \in K_j} | L_{n,t_1} - L_{n,t_2} | > \lambda \rb 
    \lesssim \frac{1}{m^{d}} \to 0 , \textnormal{ as } m\to\infty.
\end{align*}
    Since the finite-dimensional distributions of $\boldsymbol{\Lambda}_n$ and so, those of $(L_{n,t})$, converge weakly, we conclude from \eqref{tight} and  Theorem 1.5.6 in \cite{vandervaart1996} that $(L_{n,t})$ is asymptotically tight. 
    
\end{proof}

\subsection{Auxiliary results}
\label{secb}
In this section, we provide the proofs of the auxiliary results given in section \ref{holsection}, among others. 
Note that the proof of Lemma \ref{lem_mom_ineq_Y} is very similar to the proof of Lemma 3 in  \cite{dornemann2023likelihood} and we skip it for the sake of brevity. 
To begin with, we prove Lemma \ref{lem_cov_term} providing an approximation for the quantity $\sigma_1^2$ defined below \eqref{def_sigma_jk}. 

\begin{proof}[Proof of Lemma \ref{lem_cov_term}] Recalling the representation  of $\sigma_1^2$ below \eqref{def_sigma_jk} we obtain
 \begin{align} \nonumber 
       & n^2 \sigma_1^2(\nt{1}+1, n , 1, \nt{2}  )) : =  \sum_{i=1}^p ( n - \nt{1}) \nt{2} \E \left[ X_{i, (\nt{1}+1) : n} X_{i,1:\nt{2}} \big| \mathcal{A}_{i-1} \right] \\ 
        & =  2 \sum_{i=1}^p \frac{  ( n - \nt{1}) \nt{2} }{( n - \nt{1} - i + 1) ( \nt{2} - i + 1) } \nonumber \\ & \quad \quad \quad \times  \tr \lb  \bfP^{(\nt{1}+1):\nt{2} } (i-1;(\nt{1}+1):n)   \bfP^{(\nt{1}+1):\nt{2} } (i-1;1:\nt{2}) \rb  \label{eq_formula_sigma}
    \end{align}
    where
    \begin{align*}
        & \tr \lb \bfP^{(\nt{1}+1):\nt{2} } (i-1;(\nt{1}+1):n)  \bfP^{(\nt{1}+1):\nt{2} } (i-1;1:\nt{2}) \rb \\
        & = \sum_{k,l= \nt{1}+1 }^{\nt{2}} \lb \bfP (i-1;(\nt{1}+1):n) \rb_{kl} \lb \bfP (i-1;1:\nt{2}) \rb_{kl} 
    \end{align*}   
    Let 
    \begin{align*}
        \bfS_{i, j:k}  = \frac{1}{n} \bfX_{i, j:k} \bfX_{i, j:k} ^\top
    \end{align*}
    for $1 \leq i \leq p, ~ 1 \leq j < k \leq p.$ Then, we may write (replacing for a moment $i$ by $i-1$)
    \begin{align}
        & \sum_{k,l= \nt{1}+1 }^{\nt{2}} \lb \bfP (i;(\nt{1}+1):n) \rb_{kl} \lb \bfP (i;1:\nt{2}) \rb_{kl}  = \nt{2} - \nt{1} 
        - S_{i,1} - S_{i,2} + S_{i,3}, \label{eq_decomp_s}
    \end{align}
    where
    \begin{align*}
       S_{i,1} &=   \tr \bfS_{i,(\nt{1}+1):\nt{2}} \bfS_{i,1:\nt{2}}\inv , \\
        S_{i,2} & =  \tr \bfS_{i,(\nt{1}+1):\nt{2}} \bfS_{i,(\nt{1}+1):n}\inv, \\  
         S_{i,3} &= \tr \bfS_{i,(\nt{1}+1):\nt{2}} \bfS_{i,1:\nt{2}}\inv
         \bfS_{i,(\nt{1}+1):\nt{2}} \bfS_{i,(\nt{1}+1):n}\inv. 
    \end{align*}
    In the following, we will approximate the quantities $S_{i,1},  S_{i,2}$ and $S_{i,3}.$ Note that these terms actually depend on $n, ~ t_1, ~t_2$, which is not reflected by our  notation. Moreover, it is important to emphasize that, for instance, the product $\bfS_{i,(\nt{1}+1):\nt{2}} \bfS_{i,1:\nt{2}}\inv$ is not an F-matrix in the classical sense, since the data matrices $\bfX_{i,(\nt{1}+1):\nt{2}}$ and $\bfX_{i,1:\nt{2}}$ are dependent. 
    
    \noindent\textbf{Calculation of $S_{i,1}$}
      By an application of the Sherman-Morrison formula, we obtain
      \begin{align} \label{eq_sher_mor}
        \bfS_{i,j:k}\inv = \lb \bfS_{i,j:k}^{(-l)}\rb\inv  - \frac{1}{n} \beta_{i,j:k}^{(-l)} \lb \bfS_{i,j:k}^{(-l)}\rb\inv \bfx_{i,k} \bfx_{i,k}^\top \lb \bfS_{i,j:k}^{(-l)} \rb \inv , \quad 1 \leq j \leq l \leq k \leq n, ~ j\neq k,
    \end{align}
where
\begin{align*}
\bfS_{i,j:k}^{(-l)} & = \frac{1}{n} \sum\limits_{m=j}^k \bfx_{i,m} \bfx_{i,m}^\top - \frac{1}{n} \bfx_{i,l} \bfx_{i,l}^\top ,\\
\beta_{i,j:k}^{(-l)} & = \frac{1}{1 + n\inv  \bfx_{i,l}^\top \lb \bfS_{i,j:k}^{(-l)} \rb \inv \bfx_{i,l} }, \\
\bfx_{i,l} & = ( x_{l1}, \ldots, x_{li})^\top. 
\end{align*} 
As a preparation, we first calculate the mean of $\beta_{i,j:k}^{(-l)}$. Using the identity  (6.1.11) in  \cite{bai2004}, we have 
\begin{align*}
    \bfI_i = \frac{1}{n} \sum_{l=j}^k \bfx_{i,l} \bfx_{i,l}^\top  \bfS_{i,j:k}  \inv 
    =  \sum_{l=j}^k \frac{ \frac{1}{n} \bfx_{i,l} \bfx_{i,l}^\top \lb \bfS_{i,j:k}^{(-l)} \rb\inv }{1 + \frac{1}{n} \bfx_{i,l}^\top \lb \bfS_{i,j:k}^{(-l)} \rb\inv \bfx_{i,l} }.
\end{align*}
Applying the trace on both sides and dividing by $k-j+1$, yields
\begin{align*}
i =  \sum_{m=j}^k \lb 1 - \beta_{i,j:k}^{(-l)} \rb ,
\end{align*} 
which implies by the i.i.d. assumption, 
\begin{align} \label{eq_mean_beta}
\E [ \beta_{i,j:k}^{(-l)} ] =  1 - \frac{i}{k - j + 1} = \frac{k - j - i + 1}{k - j +1}. 
\end{align}
Moreover, note that $|| \bfS_{i,j:k}\inv|| \leq 1/ ( ( 1 - \sqrt{t_0} )^2 - \varepsilon ) < \infty $ for some $\varepsilon>0$ and all large $n.$
As a further preparation, we note that $(\nt{1}/n)*(1/i) \tr   \bfS_{i,1:\nt{2}}\inv $ can be approximated by the first negative moment of the \MP distribution $F^{i/\nt{2}}$, that is, 
\begin{align} \label{eq_tr_s_inv}
   \frac{\nt{2}}{in } \tr   \bfS_{i,1:\nt{2}}\inv =  \frac{1}{ 1  - i/\nt{2}} + \op,
\end{align}
uniformly with respect to  $1 \leq i \leq p$. 
 Using \eqref{eq_sher_mor}, \eqref{eq_mean_beta} and 
 Lemma B.26 in  \cite{bai2004}, we get for the first term
    \begin{align*}
       S_{i,1} = & \tr  \bfS_{i,(\nt{1}+1):\nt{2}} \bfS_{i,1:\nt{2}}\inv \\ 
       & = \frac{1}{n} \sum_{k=\nt{1}+1}^{\nt{2}}  \bfx_{i,k}^\top \bfS_{i,1:\nt{2}}\inv \bfx_{i,k} \\ 
       & = \sum_{k=\nt{1}+1}^{\nt{2}} \left\{ \frac{1}{n}  \bfx_{i,k}^\top \lb \bfS_{i,1:\nt{2}}^{(-k)} \rb \inv \bfx_{i,k} 
- \frac{1}{n^2}  \beta_{i,1:\nt{2}}^{(-k)} \lb \bfx_{i,k}^\top \lb \bfS_{i,1:\nt{2}}^{(-k)}  \rb \inv \bfx_{i,k} \rb^2 \right\}\\
&  =  \sum_{k=\nt{1}+1}^{\nt{2}} \left\{ \frac{1}{n} \tr  \lb \bfS_{i,1:\nt{2}}^{(-k)} \rb \inv - \frac{1}{n^2} ( 1 - i/\nt{2}) \lb \tr \lb \bfS_{i,1:\nt{2}}^{(-k)} \rb ^{-1} \rb^2 \right\} + o_{\PR}(n) \\ 
& = \frac{\nt{2} - \nt{1}}{n} \left\{ \tr   \bfS_{i,1:\nt{2}}\inv -  \frac{ 1 - i/\nt{2}}{n} \lb \tr\bfS_{i,1:\nt{2}} ^{-1}  \rb^2 \right\} + o_{\PR}(n).
    \end{align*}
    Combining this with \eqref{eq_tr_s_inv}, we get
    \begin{align}
        \frac{1}{n} 
S_{i,1}
& = \frac{\nt{2} - \nt{1}}{n}   \left\{ \frac{i}{\nt{2}} \frac{1}{1 - \frac{i}{\nt{2}}} - \lb 1 - \frac{i}{\nt{2}} \rb \lb \frac{i}{\nt{2}} \frac{1}{1 - \frac{i}{\nt{2}}} \rb^2 \right\} + \op \nonumber \\ 
& = \frac{\nt{2} - \nt{1}}{n}  \frac{i}{\nt{2}} + \op
 \label{eq_s1}
\end{align}
uniformly with respect to  $1 \leq i \leq p$.

    \noindent\textbf{Calculation of $S_{i,2}$}
    Similarly to the previous step, we may show that 
    \begin{align} \label{eq_s2}
        \frac{1}{n} 
S_{i,2} = \frac{(\nt{2} - \nt{1}) i}{ n ( n -\nt{1} ) } + \op 
    \end{align}
uniformly with respect to  $1 \leq i \leq p$.
    
    \noindent\textbf{Calculation of $S_{i,3}$}
   We decompose $S_{i,3}$ as 
    \begin{align*}
         S_{i,3} &= \frac{1}{n^2} \sum_{k,l=\nt{1}+1}^{\nt{2}} 
 \bfx_{i,l}^\top \bfS_{i,1:\nt{2}}\inv
         \bfx_{i,k} \bfx_{i,k}^\top \bfS_{i,(\nt{1}+1):n}\inv  \bfx_{i,l}
= S_{i,3,1} + S_{i,3,2},
\end{align*}
where
\begin{align*}
     S_{i,3,1} & = \frac{1}{n^2} \sum_{k=\nt{1}+1}^{\nt{2}} 
 \bfx_{i,k}^\top \bfS_{i,1:\nt{2}}\inv
         \bfx_{i,k} \bfx_{i,k}^\top \bfS_{i,(\nt{1}+1):n}\inv  \bfx_{i,k}, \\
    S_{i,3,2} &= \frac{1}{n^2} \sum_{\substack{k,l=\nt{1}+1, \\ k \neq l}}^{\nt{2}} 
 \bfx_{i,l}^\top \bfS_{i,1:\nt{2}}\inv
         \bfx_{i,k} \bfx_{i,k}^\top \bfS_{i,(\nt{1}+1):n}\inv  \bfx_{i,l}. 
\end{align*}
These terms will be further investigated in the following steps.

\noindent\textbf{Calculation of $\bfS_{i,3,1}$}
Applying similar techniques as in the previous steps, we get
\begin{align*}
   \frac{1}{n} S_{i,3,1} & = \frac{1}{n^3} \sum_{k=\nt{1}+1}^{\nt{2}} 
 \bfx_{i,k}^\top \lb  \bfS_{i,1:\nt{2}}^{(-k)} \rb \inv
         \bfx_{i,k} \bfx_{i,k}^\top \lb  \bfS_{i,(\nt{1}+1):n}^{(-k)} \rb \inv  \bfx_{i,k} \\
& - \frac{1}{n^4} \sum_{k=\nt{1}+1}^{\nt{2}} \beta_{i,1:\nt{2}}^{(-k)}
 \lb \bfx_{i,k}^\top \lb  \bfS_{i,1:\nt{2}}^{(-k)} \rb \inv
         \bfx_{i,k} \rb^2 \bfx_{i,k}^\top \lb  \bfS_{i,(\nt{1}+1):n}^{(-k)} \rb \inv  \bfx_{i,k} \\ 
& - \frac{1}{n^4} \sum_{k=\nt{1}+1}^{\nt{2}} \beta_{i,(\nt{1}+1):n}^{(-k)}
  \bfx_{i,k}^\top \lb  \bfS_{i,1:\nt{2}}^{(-k)} \rb \inv
         \bfx_{i,k} \lb  \bfx_{i,k}^\top \lb  \bfS_{i,(\nt{1}+1):n}^{(-k)} \rb \inv  \bfx_{i,k}  \rb^2 \\
        &  +  \frac{1}{n^5} \sum_{k=\nt{1}+1}^{\nt{2}} \beta_{i,1:\nt{2}}^{(-k)} \beta_{i,(\nt{1}+1):n}^{(-k)}
 \lb \bfx_{i,k}^\top \lb  \bfS_{i,1:\nt{2}}^{(-k)} \rb \inv
         \bfx_{i,k} \rb^2 \lb  \bfx_{i,k}^\top \lb  \bfS_{i,(\nt{1}+1):n}^{(-k)} \rb \inv  \bfx_{i,k}  \rb^2 \\
         = & \frac{\nt{2} - \nt{1}}{n} \Bigg\{ \frac{1}{n^2} \tr \bfS_{i,1:\nt{2}}\inv \tr \bfS_{i,(\nt{1}+1):n}\inv 
         -  \frac{\nt{2} - i}{\nt{2}} \frac{1}{n^3} \lb \tr \bfS_{i,1:\nt{2}}\inv \rb^2 \tr \bfS_{i,(\nt{1}+1):n}\inv 
        \\ & - \frac{n - \nt{1} - i}{n - \nt{1}}  \frac{1}{n^3} \tr \bfS_{i,1:\nt{2}}\inv \lb \tr \bfS_{i,(\nt{1}+1):n}\inv \rb^2
        \\ & + \frac{\nt{2} - i}{\nt{2}} \frac{n - \nt{1} - i}{n - \nt{1}} \frac{1}{n^4} \lb \tr \bfS_{i,1:\nt{2}}\inv  \tr \bfS_{i,(\nt{1}+1):n}\inv \rb^2 \Bigg\} 
         + o_{\PR}(n)
\end{align*}
In the following, we use a general form of \eqref{eq_tr_s_inv}, namely,
\begin{align}
    \label{eq_tr_s_inv_gen}
    \frac{1}{n} \tr \bfS_{i, j:k}\inv = \frac{i}{k - j - i + 1} + \op, \quad 1 \leq j < k \leq n, 
\end{align}
uniformly with respect to $1 \leq i \leq p.$ This gives 
\begin{align}
    \frac{1}{n} S_{i,3,1} & = \frac{\nt{2} - \nt{1}}{n} \Bigg\{  \frac{i^2}{ ( \nt{2} - i  ) ( n - \nt{1} - i  )} \nonumber \\ & 
-  \frac{\nt{2} - i}{\nt{2}} \frac{i^3}{ ( \nt{2} - i  )^2 ( n - \nt{1} - i  )} \nonumber \\ & 
- \frac{n - \nt{1} - i}{n - \nt{1}} \frac{i^3}{ ( \nt{2} - i ) ( n - \nt{1} - i  )^2} \nonumber \\ & 
+ \frac{\nt{2} - i}{\nt{2}} \frac{n - \nt{1} - i}{n - \nt{1}} \frac{i^4}{ ( \nt{2} - i  )^2 ( n - \nt{1} - i  )^2} \Bigg\} 
+ \op \nonumber \\ 
& = \frac{\nt{2} - \nt{1}}{n} \Bigg\{  \frac{i^2}{ ( \nt{2} - i  ) ( n - \nt{1} - i  )} 
-   \frac{i^3}{ \nt{2} ( \nt{2} - i  ) ( n - \nt{1} - i  )} \nonumber \\ &
-  \frac{i^3}{ (n - \nt{1}) ( \nt{2} - i ) ( n - \nt{1} - i  )} \nonumber \\ & 
+  \frac{i^4}{ \nt{2} ( n - \nt{1}) ( \nt{2} - i  ) ( n - \nt{1} - i  )} \Bigg\}
+ \op \nonumber \\ 
& = \frac{\nt{2} - \nt{1}}{n}  \frac{i^2}{\nt{2} ( n - \nt{1} ) }
+ \op \label{eq_s31}
\end{align}

\noindent\textbf{Calculation of $\bfS_{i,3,2}$}
Again applying similar techniques as in the previous steps, especially \eqref{eq_sher_mor}, \eqref{eq_mean_beta} and \eqref{eq_tr_s_inv_gen}, we get
\begin{align}
    & \frac{1}{n} \tr \bfS_{i,3,2}  \nonumber \\ & = \frac{1}{n^3} \sum_{\substack{k,l=\nt{1}+1, \\ k \neq l}}^{\nt{2}} 
 \bfx_{i,l}^\top \lb \bfS_{i,1:\nt{2}}^{(-l)} \rb \inv
         \bfx_{i,k} \bfx_{i,k}^\top \lb \bfS_{i,(\nt{1}+1):n}^{(-l)} \rb \inv  \bfx_{i,l} \nonumber\\ 
         & - \frac{1}{n^4} \sum_{\substack{k,l=\nt{1}+1, \\ k \neq l}}^{\nt{2}} \beta_{i,1:\nt{2}}^{(-l)}
 \bfx_{i,l}^\top \lb \bfS_{i,1:\nt{2}}^{(-l)} \rb \inv \bfx_{i,l} \bfx_{i,l}^\top \lb \bfS_{i,1:\nt{2}}^{(-l)} \rb \inv 
         \bfx_{i,k} \bfx_{i,k}^\top \lb \bfS_{i,(\nt{1}+1):n}^{(-l)} \rb \inv  \bfx_{i,l} \nonumber\\
          & - \frac{1}{n^4} \sum_{\substack{k,l=\nt{1}+1, \\ k \neq l}}^{\nt{2}} \beta_{i,(\nt{1}+1):n}^{(-l)} \bfx_{i,l}^\top \lb \bfS_{i,(\nt{1}+1):n}^{(-l)} \rb \inv  \bfx_{i,l}
 \bfx_{i,l}^\top \lb \bfS_{i,1:\nt{2}}^{(-l)} \rb \inv
         \bfx_{i,k} \bfx_{i,k}^\top \lb \bfS_{i,(\nt{1}+1):n}^{(-l)} \rb \inv  \bfx_{i,l} \nonumber \\ 
 & + \frac{1}{n^5} \sum_{\substack{k,l=\nt{1}+1, \\ k \neq l}}^{\nt{2}} \Big\{ \beta_{i,1:\nt{2}}^{(-l)} \beta_{i,(\nt{1}+1):n}^{(-l)}
\bfx_{i,l}^\top \lb \bfS_{i,1:\nt{2}}^{(-l)} \rb \inv \bfx_{i,l} 
\bfx_{i,l}^\top \lb \bfS_{i,(\nt{1}+1):n}^{(-l)} \rb \inv  \bfx_{i,l} \nonumber \\ & \quad ~ \quad ~ \quad ~ \quad ~ \quad ~ \quad \times 
\bfx_{i,l}^\top \lb \bfS_{i,1:\nt{2}}^{(-l)} \rb \inv 
         \bfx_{i,k} \bfx_{i,k}^\top \lb \bfS_{i,(\nt{1}+1):n}^{(-l)} \rb \inv  \bfx_{i,l} \Big\} \nonumber \\ 
         & = \frac{\nt{2} - \nt{1}}{n^3} \sum_{\substack{k=\nt{1}+1}}^{\nt{2}} 
 \bfx_{i,k}^\top \lb \bfS_{i,(\nt{1}+1):n} \rb \inv   \lb \bfS_{i,1:\nt{2}} \rb \inv
         \bfx_{i,k} \nonumber \\ 
         & - \frac{\nt{2}-\nt{1}}{n^4} \beta_{i,1:\nt{2}} \tr  \lb \bfS_{i,1:\nt{2}} \rb \inv  \sum_{\substack{k=\nt{1}+1}}^{\nt{2}} 
  \bfx_{i,k}^\top \lb \bfS_{i,(\nt{1}+1):n} \rb \inv  \lb \bfS_{i,1:\nt{2}} \rb \inv 
         \bfx_{i,k} \nonumber\\
         & -  \frac{\nt{2}-\nt{1}}{n^4} \beta_{i,(\nt{1}+1):n} \tr \lb \bfS_{i,(\nt{1}+1):n} \rb \inv  \sum_{\substack{k=\nt{1}+1}}^{\nt{2}} 
 \bfx_{i,k}^\top \lb \bfS_{i,(\nt{1}+1):n} \rb \inv  \lb \bfS_{i,1:\nt{2}} \rb \inv
         \bfx_{i,k}
        \nonumber  \\
         & + \frac{ \nt{2}-\nt{1}  }{n^5} \beta_{i,1:\nt{2}} \beta_{i,(\nt{1}+1):n} \tr \lb \bfS_{i,1:\nt{2}} \rb \inv 
\tr  \lb \bfS_{i,(\nt{1}+1):n} \rb \inv   
\nonumber \\ &  \quad \times \sum_{\substack{k=\nt{1}+1}}^{\nt{2}}
 \bfx_{i,k}^\top \lb \bfS_{i,(\nt{1}+1):n} \rb \inv  \lb \bfS_{i,1:\nt{2}} \rb \inv 
         \bfx_{i,k} \nonumber \\ 
        &  + \op  \nonumber \\ 
        & = \frac{\nt{2} - \nt{1}}{n} \frac{1}{n^2}\sum_{\substack{k=\nt{1}+1}}^{\nt{2}}
 \bfx_{i,k}^\top \lb \bfS_{i,(\nt{1}+1):n} \rb \inv  \lb \bfS_{i,1:\nt{2}} \rb \inv 
         \bfx_{i,k} \nonumber \\ & \quad \times 
         \Big\{ 1 + \frac{i^2}{\lb n  - \nt{1}  \rb \lb \nt{2}  \rb }
         - \frac{i}{ n  - \nt{1} } - \frac{i}{\nt{2} }
         \Big\} + \op \nonumber \\ 
         & = \frac{\nt{2} - \nt{1}}{n}   \frac{ \lb n  - \nt{1} - i \rb \lb \nt{2} - i \rb }{  \nt{2} \lb n - \nt{1} \rb} \frac{1}{n^2}\sum_{\substack{k=\nt{1}+1}}^{\nt{2}}
 \bfx_{i,k}^\top \lb \bfS_{i,(\nt{1}+1):n} \rb \inv  \lb \bfS_{i,1:\nt{2}} \rb \inv 
         \bfx_{i,k} \nonumber \\ 
       &  + \op.  \label{eq_s_i32}
\end{align} 
Thus, we need to compute 
\begin{align}
 & \frac{1}{n^2}\sum_{\substack{k=\nt{1}+1}}^{\nt{2}}
 \bfx_{i,k}^\top \lb \bfS_{i,(\nt{1}+1):n} \rb \inv  \lb \bfS_{i,1:\nt{2}} \rb \inv 
         \bfx_{i,k} \nonumber \\ 
         & = \frac{\nt{2} - \nt{1}}{n} \frac{1}{n} \tr \lb \bfS_{i,(\nt{1}+1):n} \inv  \bfS_{i,1:\nt{2}} \inv \rb  \Big\{ 1
- \beta_{i,(\nt{1}+1):n} \frac{1}{n} \tr \lb \bfS_{i,(\nt{1}+1):n} \inv \rb \nonumber \\ & \quad 
- \beta_{i,1:\nt{2}}  \frac{1}{n} \tr \lb \bfS_{i,1:\nt{2}} \inv \rb 
+ \beta_{i,1:\nt{2}} \beta_{i,(\nt{1}+1):n} \frac{1}{n^2} \tr \lb \bfS_{i,(\nt{1}+1):n} \inv \rb  \tr \lb \bfS_{i,1:\nt{2}} \inv \rb 
\Big\} \nonumber \\ 
& = \frac{\nt{2} - \nt{1}}{n} \frac{ \lb n  - \nt{1} - i \rb \lb \nt{2} - i \rb }{  \nt{2} \lb n - \nt{1} \rb} \frac{1}{n}  \tr \lb \bfS_{i,(\nt{1}+1):n} \inv  \bfS_{i,1:\nt{2}} \inv \rb 
+ \op 
    \label{eq_s_i32_aux}
\end{align}

Combining \eqref{eq_s_i32}, \eqref{eq_s_i32_aux} and Lemma \ref{lem_tr_s1_s2}, we get 
\begin{align}
    & \frac{1}{n} \tr \bfS_{i,3,2}  \nonumber \\ & =
    \lb \frac{\nt{2} - \nt{1}}{n}   \frac{ \lb n  - \nt{1} - i \rb \lb \nt{2} - i \rb }{  \nt{2} \lb n - \nt{1} \rb} \rb^2 \frac{1}{n}  \tr \lb \bfS_{i,(\nt{1}+1):n} \inv  \bfS_{i,1:\nt{2}} \inv \rb \nonumber \\
    & =  \lb \frac{\nt{2} - \nt{1}}{n} \rb^2  \frac{ \lb n  - \nt{1} - i \rb \lb \nt{2} - i \rb }{  \nt{2} \lb n - \nt{1} \rb} \frac{i n }{  \lb n - \nt{1} \rb \nt{2} - \lb \nt{2} - \nt{1} \rb i  } \nonumber \\ & + \op
    \label{eq_s_32_final}
\end{align}

\noindent\textbf{Conclusion}
Using \eqref{eq_formula_sigma} and \eqref{eq_decomp_s}, we obtain
\begin{align}
   & \sigma^2(\nt{1}+1, n , 1, \nt{2}  ))  
   \nonumber \\ & = 
   \frac{2}{n^2}  \sum_{i=1}^p  \frac{  ( n - \nt{1}) \nt{2} }{( n - \nt{1} - i + 1) ( \nt{2} - i + 1) } \left\{  \nt{2} - \nt{1} 
        - S_{i-1 ,1} - S_{i-1,2} + S_{i-1,3} \right\} + \op \nonumber \\ 
        & = \tau_{0,n} + \tau_{3,2,n} + \op , \label{eq_sigma_decomp_tau}
\end{align}
where
\begin{align*}
    \tau_{0,n} & =  \frac{2}{n^2}  \sum_{i=1}^p  \frac{  ( n - \nt{1}) \nt{2} }{( n - \nt{1} - i + 1) ( \nt{2} - i + 1) } \left\{  \nt{2} - \nt{1} 
        - S_{i-1 ,1} - S_{i-1,2} + S_{i-1,3,1} \right\}, \\ 
    \tau_{3,2,n} & =   \frac{2}{n^2}  \sum_{i=1}^p  \frac{  ( n - \nt{1}) \nt{2} }{( n - \nt{1} - i + 1) ( \nt{2} - i + 1) }  S_{i-1,3,2}.
\end{align*}
To simplify the first term $\tau_{0,n}$, we first note that using \eqref{eq_s1}, \eqref{eq_s2}, \eqref{eq_s31}
\begin{align*}
   & \frac{1}{n} \Big\{ \nt{2} - \nt{1} 
        - S_{i-1 ,1} - S_{i-1,2} + S_{i-1,3,1}  \Big\} \\ 
        & =  \frac{\nt{2} - \nt{1}}{n} \lb 1 + \frac{(i-1)^2}{\nt{2} (n - \nt{1} ) } - \frac{i-1}{n - \nt{1} } - \frac{i-1}{ \nt{2} }\rb  + \op  \\
        & =  \frac{\nt{2} - \nt{1}}{n} \frac{ ( \nt{2} - i + 1) (n - \nt{1} - i + 1)}{\nt{2} (n -\nt{1}) } + \op .
\end{align*}
This implies 
\begin{align} \label{eq_tau_0}
    \tau_{0,n} & = \frac{2 ( \nt{2} - \nt{1}) p } {n^2} + \op = 2 y ( t_2 - t_1) + \op . 
\end{align}
Using \eqref{eq_s_32_final}, we get for the second term
\begin{align}
    \tau_{3,2,n} & = \frac{2}{n} \sum_{i=1}^p  \lb \frac{\nt{2} - \nt{1}}{n} \rb^2  \frac{ ( i - 1) n }{  \lb n - \nt{1} \rb \nt{2} - \lb \nt{2} - \nt{1} \rb ( i - 1)  } + \op \nonumber \\ 
& = \frac{2 ( \nt{2} - \nt{1} ) ^2 }{n^2} \sum_{i=1}^p    \frac{ ( i - 1)  }{  \lb n - \nt{1} \rb \nt{2} - \lb \nt{2} - \nt{1} \rb ( i - 1)  } + \op \nonumber \\ 
& = \frac{2 ( \nt{2} - \nt{1} ) ^2 }{n^2} \frac{1}{p} \sum_{i=1}^p    \frac{ ( i - 1) p  }{  \lb n - \nt{1} \rb \nt{2} - \lb \nt{2} - \nt{1} \rb ( i - 1)  } + \op \nonumber\\ 
& = 2 (t_2 - t_1)^2 \int_0^1 \frac{y x}{y\inv (1-t_1)t_2 - (t_2 - t_1) x} dx + \op \nonumber \\ 
& = 2 (t_2 - t_1)^2 \int_0^1 \frac{y^2 x}{ (1-t_1)t_2 - (t_2 - t_1) y x} dx + \op \nonumber \\ 
& = 2 (t_2 - t_1)^2 \int_0^y \frac{ x}{ (1-t_1)t_2 - (t_2 - t_1)  x} dx + \op \nonumber \\ 
& = 2 (t_2 - t_1)^2 \Big[ - \frac{(1-t_1)t_2 \log \lb (1-t_1)t_2 - (t_2 - t_1) x\rb }{(t_2 - t_1)^2} - \frac{x}{t_2 - t_1} \Big]_{x=0}^{x=y} + \op \nonumber \\ 
& = 2 (t_2 - t_1) \Big\{ \frac{(1-t_1)t_2 \log \lb (1-t_1)t_2 \rb }{t_2 - t_1} - \frac{(1-t_1)t_2 \log \lb (1-t_1)t_2 - (t_2 - t_1) y\rb }{t_2 - t_1} - y \Big\} + \op \nonumber \\ 
& = 2 (t_2 - t_1) \Bigg\{ - \frac{(1-t_1)t_2}{t_2 -t_1} \log \lb 1 - \frac{(t_2 - t_1) y}{(1-t_1)t_2 } \rb  - y \Bigg\} + \op \nonumber \\
& = - 2 (1-t_1)t_2 \log  \lb 1 - \frac{(t_2 - t_1) y}{(1-t_1)t_2 } \rb - 2 y (t_2 - t_1) + \op . \label{eq_tau_32}
\end{align}
Combining the results for $\tau_{0,n}$ in \eqref{eq_tau_0} and $\tau_{3,2,n}$ in \eqref{eq_tau_32} and using \eqref{eq_sigma_decomp_tau}, we get
\begin{align*}
    & \sigma^2(\nt{1}+1, n , 1, \nt{2}  ))  =  - 2 (1-t_1)t_2 \log  \lb 1 - \frac{(t_2 - t_1) y}{(1-t_1)t_2 } \rb  + \op, 
\end{align*}
which concludes the proof.
\end{proof}

\begin{lemma}
    \label{lem_tr_s1_s2}
    For $t_2 > t_1, $ we have
    \begin{align*}
       &  \frac{1}{n}  \tr \lb \bfS_{i,(\nt{1}+1):n} \inv  \bfS_{i,1:\nt{2}} \inv \rb  \\ & =  \frac{i n \lb n - \nt{1} \rb \nt{2}}{\lb n - \nt{1} - i \rb \lb \nt{2} - i\rb \left\{ \lb n - \nt{1} \rb \nt{2} - \lb \nt{2} - \nt{1} \rb i \right\} }
          + \op .
    \end{align*}
\end{lemma}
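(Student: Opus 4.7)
My plan is to derive a single scalar equation in the quantity $t_n := \frac{1}{n}\tr(\bfA\inv\bfB\inv)$, where $\bfA = \bfS_{i,(\nt{1}+1):n}$ and $\bfB = \bfS_{i,1:\nt{2}}$, by exploiting the trace identity
$\tr(\bfA\inv) = \tr(\bfB\,\bfA\inv\bfB\inv) = \frac{1}{n}\sum_{k=1}^{\nt{2}} \bfx_{i,k}^\top \bfA\inv\bfB\inv \bfx_{i,k}$. Since the left-hand side is already known, up to $\op$-error, to equal $\frac{in}{n-\nt{1}-i}$ via \eqref{eq_tr_s_inv_gen}, it will suffice to express the right-hand side as a linear function of $t_n$ and solve.

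I would split the sum at index $\nt{1}$: for $k \in \{1,\ldots,\nt{1}\}$ the vector $\bfx_{i,k}$ contributes only to $\bfB$, whereas for $k \in \{\nt{1}+1,\ldots,\nt{2}\}$ it is shared by $\bfA$ and $\bfB$. In the first block, one Sherman--Morrison expansion of $\bfB\inv$ around $(\bfB^{(-k)})\inv$ gives $\bfx_{i,k}^\top \bfA\inv \bfB\inv \bfx_{i,k} = \beta_{i,1:\nt{2}}^{(-k)}\, \bfx_{i,k}^\top \bfA\inv (\bfB^{(-k)})\inv \bfx_{i,k}$; since $\bfx_{i,k}$ is independent of the sandwiched matrix, Lemma B.26 of \cite{bai2004} combined with $\E \beta_{i,1:\nt{2}}^{(-k)} = (\nt{2}-i)/\nt{2}$ from \eqref{eq_mean_beta} yields the approximation $n t_n \cdot \tfrac{\nt{2}-i}{\nt{2}} + \op$, uniformly in $k$. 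In the second block I apply Sherman--Morrison simultaneously to $\bfA\inv$ and $\bfB\inv$, pulling out the product $\beta_{i,(\nt{1}+1):n}^{(-k)}\, \beta_{i,1:\nt{2}}^{(-k)}$, to obtain $n t_n \cdot \tfrac{n-\nt{1}-i}{n-\nt{1}} \cdot \tfrac{\nt{2}-i}{\nt{2}} + \op$.

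Summing the two blocks and simplifying $\nt{1} + (\nt{2}-\nt{1})\cdot \tfrac{n-\nt{1}-i}{n-\nt{1}} = \tfrac{\nt{2}(n-\nt{1}) - (\nt{2}-\nt{1})i}{n-\nt{1}}$ should produce the linear relation
$\frac{in}{n-\nt{1}-i} = t_n \cdot \frac{(\nt{2}-i)[\nt{2}(n-\nt{1}) - (\nt{2}-\nt{1})i]}{\nt{2}(n-\nt{1})} + \op$, whose solution is exactly the formula claimed in the lemma. The hard part will be controlling the $\op$-errors uniformly across the $\mathcal{O}(n)$ summands: the Bai--Silverstein quadratic-form concentration, the mean replacements $\beta^{(-k)} \to \E\beta^{(-k)}$, and the rank-one passages $(\bfA^{(-k)})\inv \to \bfA\inv$ and $(\bfB^{(-k)})\inv \to \bfB\inv$ must each be shown to contribute errors small enough that their aggregate after summing and dividing by $n$ remains negligible. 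This is the same bookkeeping already carried out in the proof of Lemma \ref{lem_cov_term}, and the same tools should apply without essential modification.
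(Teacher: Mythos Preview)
Your approach is correct and lands on the same fixed-point equation for $t_n$ as the paper, but the setup differs. The paper imports a resolvent decomposition from \cite{diss,dornemann2024detecting}, writing $\bfS_{i,(\nt{1}+1):n}\inv = c\,\bfI + \textnormal{corrections}$, multiplying by $\bfS_{i,1:\nt{2}}\inv$, taking the trace, and identifying a single non-negligible correction $A_{t_1,t_2}$ that is itself proportional to $t_n$; this yields $t_n = \frac{(\nt{2}-\nt{1})i}{(n-\nt{1})\nt{2}}\,t_n + \frac{in}{(n-\nt{1}-i)(\nt{2}-i)} + \op$, which is algebraically your relation after dividing by $\nt{2}-i$. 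Your route via the identity $\tr\bfA\inv = \tr(\bfB\bfA\inv\bfB\inv)$ is more elementary and self-contained, with the self-referencing coefficient emerging directly from the block split at $\nt{1}$; the paper's route has the advantage of plugging into machinery already deployed for the surrounding calculations in Lemma~\ref{lem_cov_term}. Two minor remarks: the error in your displayed linear relation should read $o_{\PR}(n)$ rather than $\op$, since both sides there are of order $n$; and in the first block the sandwiched matrix $\bfA\inv(\bfB^{(-k)})\inv$ is not symmetric, but Lemma~B.26 of \cite{bai2004} applies to general (not necessarily Hermitian) matrices, so the quadratic-form concentration still goes through as you describe.
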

\begin{proof}[Proof of Lemma \ref{lem_tr_s1_s2}]
     To compute the trace, we use the general strategy of \cite{diss, dornemann2024detecting}. Note that, however, their results do not apply to our situation. Indeed, the terms of interest admit subtle differences and needs to be studied carefully.
     Similarly to \cite[(6.25)]{diss}, we have the following decomposition for $\bfS_{i,(\nt{1}+1):n}$,
    \begin{align*} 
		& \bfS_{i,(\nt{1}+1):n} \inv    
		=    \frac{1}{\frac{ n - \nt{1} }{n} b_{(\nt{1}+1):n} } \bfI
		+ b_{(\nt{1}+1):n} \mathbf{A} + \mathbf{B} + \mathbf{C},
	\end{align*}
	where
	\begin{align*}
	 	\mathbf{A} &= 
    -  \frac{1}{\frac{n - \nt{1}}{n} b_{i, (\nt{1}+1):n }} 
   \sum\limits_{\substack{k=\nt{1}+1}}^{n} 
   \lb n\inv \bfx_{i,k} \bfx_{i,k}^\top - n\inv \bfI \rb \lb \bfS_{i,(\nt{1}+1):n}^{(-k)} \rb \inv  ,
   \\
	 	\mathbf{B} & 
   =  -  \frac{1}{\frac{n - \nt{1}}{n} b_{i, (\nt{1}+1):n }} 
   \sum\limits_{\substack{i=  \nt{1}+1 }}^{n} \lb \beta_{i, (\nt{1}+1):n }^{(-k)}  - b_{i, (\nt{1}+1):n } \rb 
	 	n\inv \bfx_{i,k} \bfx_{i,k}^\top \lb \bfS_{i,(\nt{1}+1):n}^{(-k)} \rb \inv  , 
   \\
	 	\mathbf{C}&
   =    n\inv  \sum_{k=\nt{1}+1}^n 
	 	\lb    \bfS_{i,(\nt{1}+1):n}  \inv - \lb \bfS_{i,(\nt{1}+1):n}^{(-k)} \rb \inv \rb. 
	\end{align*}
 A similar decomposition can be derived for $ \bfS_{i,1:\nt{2}} \inv  $.
  In the following, we apply this decomposition to $( 1/n) \tr  \bfS_{i,(\nt{1}+1):n} \inv     \bfS_{i,1:\nt{2}} \inv   $ and to identify the contributing terms. 
 Similarly to  the arguments given in Section 6.3.2 (Step 2.1) in \cite{diss}, we see that terms involving $\bfB_s$ and $\bfC_s$ are asymptotically negligible, among others.
   Applying  the representation    (B.12) in \cite{dornemann2024detecting} to our setting and using \eqref{eq_mean_beta}, we get
    \begin{align*}
        n\inv \tr \lb \bfS_{i,(\nt{1}+1):n} \inv  \bfS_{i,1:\nt{2}} \inv \rb 
        & =  A_{t_1,t_2} 
        + \frac{i}{n} \frac{1}{\frac{n-\nt{1}}{n} b_{i,(\nt{1}+1):n} \frac{\nt{2}}{n} b_{i,1:\nt{2}} }
        + \op \\
        & =  A_{t_1,t_2} 
        + \frac{i n}{ \lb n -\nt{1} - i\rb \lb  \nt{2}  - i \rb } 
        + \op ,
    \end{align*}
    where 
    \begin{align*}
        b_{i,(\nt{1}+1):n} & = \frac{1}{1 + n\inv \E \left[ \tr \bfS_{i,(\nt{1}+1):n}\inv \right] }, \quad 
b_{i,1:\nt{2} }  = \frac{1}{1 + n\inv \E \left[ \tr \bfS_{i,1:\nt{2}}\inv \right] } , \\ 
        A_{t_1,t_2} & = \frac{1}{n^3} \frac{1}{\frac{n-\nt{1}}{n} } \sum_{k=\nt{1}+1}^{\nt{2}} \beta_{i,1:\nt{2}}^{(-k)} \bfx_{i,k}^\top \lb  \bfS_{i,(\nt{1}+1):n}^{(-k)} \rb  \inv  \lb \bfS_{i,1:\nt{2}}^{(-k)} \rb \inv \bfx_{i,k} \bfx_{i,k}^\top \bfS_{i,1:\nt{2}}^{(-k)} \bfx_{i,k} \\
        & =  \frac{\nt{2} - \nt{1}}{n^2 \lb n-\nt{1} \rb }   \beta_{i,1:\nt{2}} \tr \lb   \bfS_{i,(\nt{1}+1):n} \inv  \bfS_{i,1:\nt{2}} \inv  \rb \tr \lb \bfS_{i,1:\nt{2}} \inv \rb + \op \\ 
        & =  \frac{\lb \nt{2} - \nt{1}\rb i }{ \lb n-\nt{1} \rb \nt{2} }   \frac{1}{n} \tr \lb   \bfS_{i,(\nt{1}+1):n} \inv  \bfS_{i,1:\nt{2}} \inv  \rb  + \op. 
    \end{align*}
    This implies
    \begin{align*}
          & n\inv \tr \lb \bfS_{i,(\nt{1}+1):n} \inv  \bfS_{i,1:\nt{2}} \inv \rb 
           = \frac{ \frac{i n}{ \lb n -\nt{1} - i  \rb \lb  \nt{2}  - i \rb } }{1 - \frac{\lb \nt{2} - \nt{1}\rb i }{ \lb n-\nt{1} \rb \nt{2} } } + \op \\
          & = \frac{i n \lb n - \nt{1} \rb \nt{2}}{\lb n - \nt{1} - i \rb \lb \nt{2} - i\rb \left\{ \lb n - \nt{1} \rb \nt{2} - \lb \nt{2} - \nt{1} \rb i \right\} }
          + \op .
    \end{align*}

\end{proof}

In the following, we prove the approximation for $\sigma_2$ appearing in Lemma \ref{lem_sigma_2}.

\begin{proof}[Proof of Lemma \ref{lem_sigma_2}]
    By definition of $\sigma_2^2$, it suffices to show that 
    { \begin{align}
        &  \sum_{i=1}^p  \frac{   \tr \lb  \bfP (i-1;j_1:k_1) \odot  \bfP^{j_1:k_1} (i-1;j_2:k_2) \rb }{( k_1 - j_1 - i + 1 ) (k_2 - j_2 - i + 1)}
         = \frac{p}{k_2 - j_2 + 1} + \op 
         \label{goal4} \end{align}
and \begin{align}
         & \sum_{i=1}^p  \frac{ \tr \lb  \bfP^{(\nt{1} + 1): \nt{2} } (i-1;( \nt{1}+1 ): n) \odot \bfP^{(\nt{1} + 1): \nt{2} } (i-1;1:\nt{2}) \rb}{ ( \nt{2} - i +1)  (n-\nt{1} - i +1 ) } \nonumber \\ & \quad  = \frac{p (\nt{2} - \nt{1})}{\nt{2} (n-\nt{1} ) } + \op . 
         \label{goal5}
    \end{align} }
    We begin with a proof of \eqref{goal5}. Note that one can show similarly to \eqref{eq_s1}
    \begin{align*}
        \frac{1}{n-\nt{1}}  \sum_{k=\nt{1} + 1}^{\nt{2}} \lb \frac{ \bfP (i-1;1:\nt{2}) }{\nt{2} - i + 1}  \rb_{kk} 
        & =  \frac{\nt{2} - \nt{1}}{\nt{2} (n - \nt{1})} + \op, \\
        \frac{1}{n-\nt{1}}  \sum_{k=\nt{1} + 1}^{\nt{2}} \lb \frac{ \bfP (i-1;1:\nt{2}) }{\nt{2} - i + 1}  \rb_{kk} 
         & =  \frac{\nt{2} - \nt{1}}{\nt{2} (n - \nt{1})} + \op. 
    \end{align*}
This gives  
    \begin{align*}
        &  \tr \Bigg\{ \lb  \frac{  \bfP^{(\nt{1} + 1): \nt{2} } (i-1;( \nt{1}+1 ): n) }{n-\nt{1} - i + 1} - \frac{1}{n -\nt{1}  }  \bfI_{\nt{2}-\nt{1}} \rb  \\ & \quad \quad \odot \lb \frac{  \bfP^{(\nt{1} + 1): \nt{2} } (i-1;1:\nt{2}) }{\nt{2} - i + 1} - \frac{1}{\nt{2}}  \bfI_{\nt{2}- \nt{1}} \rb  \Bigg\}  \\ 
        & = \frac{ \tr \lb  \bfP^{(\nt{1} + 1): \nt{2} } (i-1;( \nt{1}+1 ): n) \odot  \bfP^{(\nt{1} + 1): \nt{2} } (i-1;1:\nt{2}) \rb}{ ( \nt{2} - i +1)  (n-\nt{1} - i +1 ) } 
        \\ & \quad
        - \frac{1}{n-\nt{1}}  \sum_{k=\nt{1} + 1}^{\nt{2}} \lb \frac{ \bfP (i-1;1:\nt{2}) }{\nt{2} - i + 1}  \rb_{kk} 
        \\  & \quad
        - \frac{1}{\nt{2}} \sum_{k=\nt{1} + 1}^{\nt{2}} \lb \frac{  \bfP (i-1;( \nt{1}+1 ): n) }{n - \nt{1} - i + 1} \rb_{kk} 
        + \frac{\nt{2} - \nt{1}}{\nt{2} (n - \nt{1})}
        \\ 
         & = \frac{ \tr \lb  \bfP^{(\nt{1} + 1): \nt{2} } (i-1;( \nt{1}+1 ): n) \odot  \bfP^{(\nt{1} + 1): \nt{2} } (i-1;1:\nt{2}) \rb}{ ( \nt{2} - i +1)  (n-\nt{1} - i +1 ) } - \frac{\nt{2} - \nt{1}}{\nt{2} (n - \nt{1})} \\ &  \quad + \op. 
    \end{align*}
    Using the same arguments as in the proofs of Lemma 4 and 5 in \cite{dornemann2023likelihood}, we conclude that
    \begin{align*}
       &  \sum_{i=1}^p \left\{ \frac{ \tr \lb  \bfP^{(\nt{1} + 1): \nt{2}}  (i-1;( \nt{1}+1 ): n) \odot  \bfP^{(\nt{1} + 1): \nt{2}} (i-1;1:\nt{2}) \rb}{ ( \nt{2} - i +1)  (n-\nt{1} - i +1 ) } - \frac{\nt{2} - \nt{1}}{\nt{2} (n - \nt{1})} \right\} \\ 
        & = \sum_{i=1}^p  \tr \Bigg\{ \lb  \frac{  \bfP^{(\nt{1} + 1): \nt{2}} (i-1;( \nt{1}+1 ): n) }{n-\nt{1} - i + 1} - \frac{1}{n -\nt{1}  }  \bfI_{\nt{2}-\nt{1}} \rb  \\ & \quad \quad \quad \quad \odot \lb \frac{  \bfP^{(\nt{1} + 1): \nt{2}} (i-1;1:\nt{2}) }{\nt{2} - i + 1} - \frac{1}{\nt{2}}  \bfI_{\nt{2}-\nt{1}} \rb  \Bigg\} + \op \\ 
        & = \op,
    \end{align*}
    which implies \eqref{goal5}. The assertion \eqref{goal4} can be shown very similarly and is omitted for the sake of brevity. 
\end{proof}

We are now in the position to prove the following auxiliary result on the quadratic term given previously in Lemma \ref{lem_quad_term_fidis}. 
\begin{proof}[Proof of Lemma \ref{lem_quad_term_fidis}]
   Define \begin{align} \label{def_A_it}
      A_{i,t} = \frac{ \nt{} }{n} X_{i, 1:\nt{}}^2 + \frac{ n -\nt{}}{n} X_{i, ( \nt{} +1) :n }^2 -   X_i^2, \quad 1 \leq i \leq p, t\in[t_0,1-t_0].
   \end{align}
   To begin with, we show that
   \begin{align} \label{goal1}
       \sum_{i=1}^p  \E [ A_{i,t} | \mathcal{A}_{i-1 } ]  - \breve\sigma^2_{n,t}  = \op.
   \end{align}
   Recalling \eqref{def_sigma_jk}, \eqref{formula_sigma} and \eqref{eq_sigma_2}, we see that 
   \begin{align}
       \sum\limits_{i=1}^p \E [ A_{i,t} | \mathcal{A}_{i-1} ]
      & = \frac{n}{\nt{}} \sigma^2(1, \nt{}, 1, \nt{})
       + \frac{n}{n - \nt{}} \sigma^2( \nt{} +1 , n, \nt{} +1, n)
       - \sigma^2(1, n, 1, n) \nonumber \\
       & =  2  \log \lb 1- \frac{p}{n} \rb 
        - 2 \frac{\nt{}}{n} \log \lb 1 - \frac{p}{\nt{}} \rb 
        -2 \frac{n - \nt{}}{n} \log \lb 1 - \frac{p}{n - \nt{}} \rb \nonumber \\ & \quad   
        + \frac{(\E[x_{11}^4] -3)p}{n} + \op \nonumber \\ 
       &= \breve\sigma_{n,t}^2 + \op, \label{conv_A_it}
   \end{align}
   where we used Proposition \ref{lem_consistency_kappa}. 
  This implies assertion \eqref{goal1}. Thus, it remains to show that 
    \begin{align*} 
      \sum_{i=1}^p \lb A_{i,t} - \E [ A_{i,t} | \mathcal{A}_{i-1 } ] \rb  =\op ,
   \end{align*}
   which follows from \eqref{mom_ineq_Q2} and \eqref{def_Q} given later.
\end{proof}

\subsection{Proof of Proposition \ref{lem_consistency_kappa}}
\label{sec_kappa}

Define 
\begin{align*}
    \tau_n & = \| \bfSigma_n\|_F^2, \\
    \nu_n & = \Var \lb \| \bfy_{1} - \E[\bfy_{1}] \|_2^2 \rb, \\ 
    \omega_n &= \sum_{j=1}^p \Sigma_{jj}^2. 
\end{align*}
Then, \eqref{eq_formula_kappa} can be written as 
\begin{align*}
      \kappa_n = 3 + \frac{\nu_n - 2 \tau_n}{\omega_n}.
\end{align*}
Following the routine in Section S1$\cdot$1 of \cite{lopes2019bootstrapping}, the assertion of Proposition \ref{lem_consistency_kappa} is implied by the following results.

\begin{lemma} \label{lem_kappa_aux_consistency}
    Suppose that assumptions \ref{ass_mp_regime} and \ref{ass_mom} are satisfied, and that $H_0$ holds true. Then, it holds that 
        \begin{align}
        &\frac{\hat\tau_n}{\tau_n}  \conp 1. \tag{a} \label{eq_tau_consistency} \\ 
      & \frac{1}{\omega_n} \E \left| \hat\omega_n - \omega_n\right|  \to 0,
       \tag{b} \label{eq_omega_consistency} \\
       & \frac{\hat\nu_n}{\nu_n}  \conp 1.
      \tag{c} \label{eq_nu_consistency}
       \end{align} 
\end{lemma}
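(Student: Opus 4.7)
The plan is to prove each of \eqref{eq_tau_consistency}--\eqref{eq_nu_consistency} by a direct moment computation, exploiting the fact that under Assumption \ref{ass_sigma_null} all three target quantities are of order $p$: namely $\tau_n = \|\bfSigma\|_F^2 \asymp p$ and $\omega_n = \sum_j \Sigma_{jj}^2 \asymp p$ by the uniform spectral bounds, while $\nu_n = 2\tau_n + (\kappa-3)\omega_n \asymp p$ by the representation \eqref{eq_formula_kappa}. It therefore suffices to show that $\hat\tau_n - \tau_n = o_\PR(p)$, $\E|\hat\omega_n - \omega_n| = o(p)$, and $\hat\nu_n - \nu_n = o_\PR(p)$. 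Throughout, I would use the factorization $\bfy_i = \bfSigma^{1/2}\bfx_i$ so that every required moment becomes a sum involving the entries of $\bfSigma^{1/2}$ weighted by mixed cumulants of $x_{11}$, which are finite by Assumption \ref{ass_mom}.

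For (a), I would first verify that $\hat\tau_n$ is (up to an $O(p/n)$ bias) unbiased for $\tr(\bfSigma^2)$. Writing the centered sample covariance as $\hat\bfSigma^{\textnormal{cen}} = \tfrac{1}{n-1}\bfSigma^{1/2}\bfX(\mathbf{I} - \tfrac{1}{n}\mathbf{1}\mathbf{1}^\top)\bfX^\top\bfSigma^{1/2}$ and expanding, a standard computation yields $\E[\tr((\hat\bfSigma^{\textnormal{cen}})^2)] = \tau_n + \tfrac{1}{n-1}(\tr\bfSigma)^2 + O(p/n)$ and $\E[(\tr\hat\bfSigma^{\textnormal{cen}})^2] = (\tr\bfSigma)^2 + O(p)$, whence $\E[\hat\tau_n] = \tau_n + o(p)$. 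I would then bound $\Var(\hat\tau_n) = O(p)$ by expanding both $\tr((\hat\bfSigma^{\textnormal{cen}})^2)^2$ and $(\tr\hat\bfSigma^{\textnormal{cen}})^4$ in the $x_{ji}$; the index sums collapse via Wick-type pairings, and the finite fourth moments assumed in \ref{ass_mom} keep all cumulant contributions $O(p)$ rather than $O(p^2)$. Chebyshev then delivers the claim. For (b), set $\hat v_j := \tfrac{1}{n}\sum_i (y_{ji} - \bar y_j)^2$ so that $\hat\omega_n = \sum_j \hat v_j^2$. Direct calculation gives $\E\hat v_j = \tfrac{n-1}{n}\Sigma_{jj}$ and $\Var(\hat v_j) = O(\Sigma_{jj}^2/n)$, hence $\E\hat v_j^2 = \Sigma_{jj}^2(1 + O(1/n))$ and $\E\hat\omega_n = \omega_n(1+O(1/n))$. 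For the $L^1$ bound, Jensen reduces the problem to bounding $\E(\hat\omega_n - \omega_n)^2 = \sum_{j,k}\Cov(\hat v_j^2, \hat v_k^2)$; via Cauchy-Schwarz and the uniform spectral bound on $\bfSigma$, this double sum is $O(p)$, which is $o(p^2) = o(\omega_n^2)$.

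For (c), I would view $\hat\nu_n$ as the sample variance of the (dependent) variables $U_i := \|\bfy_i - \bar\bfy\|_2^2$. If $\bar\bfy$ were replaced by the true mean $\mu := \E\bfy_1$, then $\hat\nu_n$ would be the ordinary sample variance of the i.i.d. sequence $\tilde U_i := \|\bfy_i - \mu\|_2^2$, which has variance of order $p^2$ by Assumption \ref{ass_mom}, so the classical LLN for sample variances yields $\hat\nu_n^{(\mu)} \to \nu_n$ at rate $p/\sqrt{n}$. To handle the actual centering, I would expand $\|\bfy_i - \bar\bfy\|_2^2 = \|\bfy_i - \mu\|_2^2 - 2(\bfy_i - \mu)^\top(\bar\bfy - \mu) + \|\bar\bfy - \mu\|_2^2$; since $\E\|\bar\bfy - \mu\|_2^2 = \tr(\bfSigma)/n = O(p/n)$, each cross-term is controlled by Cauchy--Schwarz and contributes $o_\PR(p)$ to $\hat\nu_n - \hat\nu_n^{(\mu)}$, which after division by $\nu_n \asymp p$ vanishes in probability.

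The main obstacle is carefully tracking the joint fourth-moment calculations in (a) and (b) once the coordinates of $\bfy$ are correlated. The key simplification is that, after inserting $\bfy_i = \bfSigma^{1/2}\bfx_i$, every term reduces to a weighted index sum of the form $\sum_{j_1,\ldots,j_4}(\bfSigma^{1/2})_{j_1 a_1}\cdots(\bfSigma^{1/2})_{j_4 a_4}$ multiplied by a mixed cumulant of the i.i.d. $x_{ji}$; these sums are bounded in terms of $\|\bfSigma\|_F^2$ (which is $O(p)$) or $\|\bfSigma\|_{\mathrm{op}}$ (which is $O(1)$), giving exactly the bounds needed. No additional structural assumptions beyond those already in \ref{ass_mp_regime}--\ref{ass_sigma_null} are required.
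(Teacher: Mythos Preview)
Your outline for (c)---compare $\hat\nu_n$ to the version $\hat\nu_n^{(\mu)}$ centred at the true mean and then invoke a law of large numbers for the latter---is exactly the paper's route (the paper calls the true-mean version $\breve\nu_n$ and cites \cite{lopes2019bootstrapping}, Lemma~S.3, for its ratio consistency, then shows $\E|(\breve\nu_n-\hat\nu_n)/\nu_n|\to 0$ by a computation very close to your cross-term expansion). For (a) and (b) the paper simply defers to \cite{bai1996effect}, Section~A.3, and \cite{lopes2019bootstrapping}, Lemma~S.2, so your direct moment computations are in the same spirit with the work spelled out rather than cited.

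There is, however, a genuine moment gap in your sketch. Assumption~\ref{ass_mom} only guarantees $\E|x_{11}|^{4+\delta}<\infty$ for some $\delta>0$, not sixth or eighth moments. Your Chebyshev step for (a) requires $\Var(\hat\tau_n)$, which contains terms such as $\E\big[(\bfx_1^\top\bfSigma\bfx_1)^2\,\bfx_1^\top\bfSigma^2\bfx_1\big]$ of degree six in $\bfx_1$; your Jensen step for (b) requires $\Var(\hat v_j^2)$ and hence $\E[y_{j1}^8]$; and the ``classical LLN for sample variances'' in (c) is really a triangular-array statement whose Chebyshev proof would need $\E|\bfx_1^\top\bfSigma\bfx_1-\tr\bfSigma|^4$. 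None of these is available under \ref{ass_mom} with general $\delta>0$. The fix is to work in $L^{1+\delta/4}$ rather than $L^2$: Lemma~B.26 in \cite{bai2004} gives $\E|\bfx_1^\top\bfSigma\bfx_1-\tr\bfSigma|^{2+\delta/2}\lesssim(\tr\bfSigma^2)^{1+\delta/4}$, and a Marcinkiewicz--Zygmund inequality then yields the triangular-array WLLN needed in (c); for (b) one can avoid squaring altogether via $\E|\hat\omega_n-\omega_n|\le\sum_j\E\big(|\hat v_j-\Sigma_{jj}|\,|\hat v_j+\Sigma_{jj}|\big)$ and Cauchy--Schwarz, which only uses fourth moments. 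Two smaller slips: $\Var(\tilde U_i)=\nu_n\asymp p$, not $p^2$; and the lower bound $\nu_n\gtrsim p$ is not immediate from $\nu_n=2\tau_n+(\kappa-3)\omega_n$ when $\kappa<3$---the paper borrows it from \cite{lopes2019bootstrapping}.
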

\begin{proof}[Proof of Lemma \ref{lem_kappa_aux_consistency}]
 For the proof of \eqref{eq_tau_consistency}, we refer to \cite[Section A.3]{bai1996effect}.
\\ \medskip \\ 
 To prove \eqref{eq_omega_consistency}, we define 
 \begin{align*}
   \overline{y}_{ j \cdot}  & = \frac{1}{n} \sum_{i'=1}^n y_{ji'}, \\
     \hat \sigma_j^2 & = \frac{1}{n} \sum_{i=1}^n \lb y_{ji} - \overline{y}_{ j \cdot } \rb ^2 , \\
    \hat \sigma_{j,1}^2 & = \frac{1}{n} \sum_{i=1}^n \lb y_{ji} - \E[y_{ji}]  \rb ^2 , \\
    \hat \sigma_{j,2}^2 & =  \lb \E[y_{j1}] - \overline{y}_{ j \cdot } \rb ^2 ,
     \quad 1 \leq j \leq p.
 \end{align*}
 Then, we have for $\hat\omega_n$ that
 \begin{align*}
     \hat\omega_n = \sum_{j=1}^p \lb \hat\sigma_j^2\rb^2 
     \lesssim \sum_{j=1}^p \lb \hat\sigma_{j,1}^2\rb^2  + \sum_{j=1}^p \lb \hat\sigma_{j,2}^2\rb^2 
     =: \hat\omega_{n,1} + \hat\omega_{n,2}.
 \end{align*}
Then, it follows from Lemma S.2 in \cite{lopes2019bootstrapping} that 
\begin{align} \label{omega1}
    \frac{1}{\omega_n} \E \left| \hat\omega_{n,1} - \omega_n\right|  \to 0.
\end{align}
We continue with studying the second term $\hat\omega_{n,2}.$ Without loss of generality, we may assume that $\E[y_{j1}]=0$ for all $1 \leq j \leq p$, and we use the notation $(U_{kl})_{1 \leq k,l \leq p} = \bfSigma^{1/2}.$ As a preparation, we note that $\E [ y_{j1}^2] = \Sigma_{jj} \leq \| \bfSigma \| \lesssim 1$. Moreover, note that $\max_{1\leq k,l \leq p} |U_{kl}| \lesssim 1$, where $U_{kl}$ denote the entries of $\bfSigma^{1/2}$. Then, one can also verify by a direct calculation $\E[y_{j1}^4]\lesssim 1$.  These considerations imply
\begin{align}
    \E [\overline{y}_{j\cdot}^2] & = \frac{1}{n^2} \sum_{i=1}^n  \E [y_{ji}^2] = \frac{1}{n} \Sigma_{jj} \lesssim \frac{1}{n}, \label{eq_ybar_2ndmom} \\ 
    \E [\overline{y}_{j\cdot}^4] & \lesssim \frac{1}{n^3} \E [y_{j1}^4] + \frac{1}{n^2} \lb \E[y_{j1}^2] \rb^2 \lesssim \frac{1}{n^2}. \label{eq_ybar_4thmom}
\end{align}
Then, we obtain 
\begin{align*}
  \E \lb   \hat\sigma_{j,2}^2 \rb ^2
  & = \E \lb \overline{y}_{j\cdot} \rb ^4
   \lesssim \frac{1}{n^2}.
\end{align*}
As $\omega_n \gtrsim 1$, we conclude that 
 \begin{align} \label{omega2}
       \frac{1}{\omega_n} \E \left| \hat\omega_{n,2} \right|  = 
       \frac{1}{\omega_n}  \sum_{j=1}^p \E  \lb \hat\sigma_{j,2}^2\rb^2 
       = o(1).
 \end{align}
 Then, assertion \eqref{eq_omega_consistency} follows from \eqref{omega1} and \eqref{omega2}.
 \\ \medskip \\ 
For a proof of part \eqref{eq_nu_consistency} we note  Lemma S.3 in \cite{lopes2019bootstrapping} 
implies 
 \begin{align} \label{eq_conv_breve_nu}
     \frac{\breve \nu_n}{\nu_n} \conp 1,
 \end{align}
 where
 \begin{align*}
     \breve \nu_n = \frac{1}{n-1} \sum_{i=1}^n \lb \| \bfy_i - \E[\bfy_i] \|_2^2 - \frac{1}{n} \sum_{i=1}^n \| \bfy_i - \E[\bfy_i] \|_2^2 \rb^2 .
 \end{align*}
Then, \eqref{eq_nu_consistency} follows from \eqref{eq_conv_breve_nu} and 
\begin{align} \label{eq_conv_breve_hat_nu}
    \E \left| \frac{\breve\nu_n - \hat\nu_n}{\nu_n}\right| = o(1). 
\end{align}
In the following, we will verify \eqref{eq_conv_breve_hat_nu} assuming w.l.o.g. that $\E[x_{11}]=0.$ We define
\begin{align*}
    \breve\nu_{n,1}^{1/2}  & =  \| \bfy_1  \|_2^2 - \frac{1}{n} \sum_{i=1}^n \| \bfy_i   \|_2^2  , \\
    \hat\nu_{n,1}^{1/2} & = \| \bfy_1 - \overline{\bfy}  \|_2^2 - \frac{1}{n} \sum_{i=1}^n \| \bfy_i - \overline{\bfy}  \|_2^2.
\end{align*}

\noindent\textbf{Step 1}
Let $\overline{y}_{1\cdot}, \ldots, \overline{y}_{p \cdot}$ denote the components of the $p$-dimensional vector $\overline{\bfy}.$ Then, a direct computation gives
\begin{align}   
     \E  (\breve\nu_{n,1}^{1/2}  - \hat\nu_{n,1}^{1/2}  )^2
     & = \E \Big [  2 \sum_{j=1}^p \overline{y}_{j\cdot} \big (  y_{j1} - \overline{y}_{j\cdot} \big )  \Big ] ^2  = \sum_{j=1}^p \E [ T_{1,j} ]
    + \sum_{\substack{j,k=1, \\ j \neq k}}^p \E [ T_{2,j,k} ], \label{eq_nu_diff}
\end{align}
where for $1 \leq j \neq k \leq p$
\begin{align*}
    T_{1,j} & = \overline{y}_{j\cdot}^2 \lb y_{j1} - \overline{y}_{j\cdot} \rb^2 \\
    T_{2,j,k} & = \overline{y}_{j\cdot} \overline{y}_{k\cdot} \lb y_{j1} - \overline{y}_{j\cdot} \rb \lb y_{k1} - \overline{y}_{k\cdot} \rb
\end{align*}
In the following, we use the notation
\begin{align} \label{eq_yj_bar}
    \overline{y}_{j,-1} = \frac{1}{n} \sum_{i=2}^n y_{ji} = \overline{y}_{j\cdot} - \frac{1}{n} y_{j1}, 
\end{align}
which is independent of $y_{j1}, ~ 1 \leq j \leq p.$
Subsequently, we analyze $T_{1,j}$ and $T_{2,j,k}.$ 
For the mean of the first term, we use \eqref{eq_ybar_2ndmom} and \eqref{eq_ybar_4thmom} to get 
\begin{align}
    \E [T_{1,j} ] & = \E \left[  \overline{y}_{j\cdot}^2 \lb y_{j1} - \overline{y}_{j\cdot} \rb^2 \right]
    \lesssim \E [\overline{y}_{j\cdot}^2 y_{j1}^2] 
    + \E [\overline{y}_{j\cdot}^4 ] 
    \lesssim \E \left[ \lb \overline{y}_{j,-1}+ n\inv y_{j1} \rb ^2 y_{j1}^2\right] 
    + n\inv \nonumber \\
    & \lesssim  \E \left[  \overline{y}_{j,-1}^2  y_{j1}^2\right]
    +  n^{-2} \E \left[  y_{j1}^4\right]
    + n\inv 
    \lesssim \E \left[  \overline{y}_{j,-1}^2 \right] \E \left[  y_{j1}^2\right] +n \inv \lesssim n\inv. \label{eq_bound_t1}
\end{align}
For the mean of the second term, we expand the brackets and get 
\begin{align}
    \E [ T_{2,j,k}] &= \E [ T_{2,1,j,k}] - \E [ T_{2,2,j,k}] - \E [ T_{2,3,j,k}] + \E [ T_{2,4,j,k}], \label{eq_decomp_t2}
\end{align}
where 
\begin{align*}
    T_{2,1,j,k} & =  \overline{y}_{j\cdot} \overline{y}_{k\cdot}  y_{j1} y_{k1}  , \\
     T_{2,2,j,k} & = \overline{y}_{j\cdot} \overline{y}_{k\cdot}^2  y_{j1}, \\
    T_{2,3,j,k} & =  \overline{y}_{j\cdot}^2 \overline{y}_{k\cdot}  y_{k1}  , \\
    T_{2,4,j,k} & = \overline{y}_{j\cdot}^2 \overline{y}_{k\cdot}^2  . 
\end{align*}
Using \eqref{eq_yj_bar}, we get
\begin{align}
   \left| \E [T_{2,2,j,k}] \right| & \leq \frac{1}{n} \E [\overline{y}_{k\cdot}^2 y_{j1}^2] + \left| \E[\overline{y}_{j,-1} \overline{y}_{k\cdot}^2  y_{j1}] \right| 
    \lesssim \frac{1}{n^2} + \left| \E[\overline{y}_{j,-1} \overline{y}_{k\cdot}^2  y_{j1}] \right| \nonumber \\
    & \leq  \frac{1}{n^2} + \left| \E[\overline{y}_{j,-1} \overline{y}_{k,-1}^2  y_{j1}] \right|  + \frac{1}{n^2} \left| \E[\overline{y}_{j,-1} y_{k1}^2  y_{j1}] \right|
    + \frac{2}{n} \left|  \E[\overline{y}_{j,-1} \overline{y}_{k,-1}  y_{j1} y_{k1}] \right| \nonumber  \\
    &\leq  \frac{1}{n^2} 
    + \frac{2}{n}  \left| \E[\overline{y}_{j,-1} \overline{y}_{k,-1} ] \E[ y_{j1} y_{k1}] \right| \nonumber  \\ 
    & \lesssim \frac{1}{n^2}, \label{eq_bound_t22}
\end{align}
where we used that (as a consequence of \eqref{eq_ybar_2ndmom} and \eqref{eq_ybar_4thmom}) 
\begin{align*}
 \E [\overline{y}_{k\cdot}^2 y_{j1}^2] & \leq 
\lb \E [\overline{y}_{k\cdot}^4] \E[ y_{j1}^4] \rb^{1/2} \lesssim \frac{1}{n},\\
    \E[\overline{y}_{j,-1} \overline{y}_{k,-1}^2  y_{j1}]
    & = \E[\overline{y}_{j,-1} \overline{y}_{k,-1}^2 ] \E[ y_{j1}]
    = 0, \\
    \E[\overline{y}_{j,-1} y_{k1}^2  y_{j1}] & = \E[\overline{y}_{j,-1} ] \E[ y_{k1}^2  y_{j1}] =0, \\
    \left| \E[\overline{y}_{j,-1} \overline{y}_{k,-1} ] \E[ y_{j1} y_{k1}] \right| 
    &= \left| \Sigma_{kj} \E[\overline{y}_{j,-1} \overline{y}_{k,-1} ] \right|
    \lesssim \lb \E[\overline{y}_{j,-1}^2] \E[ \overline{y}_{k,-1}^2 ] \rb^{1/2} \lesssim \frac{1}{n}
    .
\end{align*}
Similarly to the considerations for $T_{2,2,j,k}$, we get
\begin{align}
   \left|  \E [T_{2,3,j,k}] \right| \lesssim \frac{1}{n^2}. \label{eq_bound_t23}
\end{align}
By an application of Hölder's inequality and \eqref{eq_ybar_4thmom}, we get
\begin{align}
    \E [ T_{2,4,j,k}] \lesssim \frac{1}{n^2}. \label{eq_bound_t24}
\end{align}
It is left to analyze the mean of the term $T_{2,1,j,k}$. Using \eqref{eq_yj_bar} and the fact $\E [\overline{y}_{j,-1}]=0$ for all $1 \leq j \leq p$, we obtain
\begin{align*}
    & \E [ T_{2,1,j,k}] 
     = \E \left[ \lb \overline{y}_{j,-1} + \frac{1}{n} y_{j1} \rb \lb \overline{y}_{k,-1} + \frac{1}{n} y_{k1} \rb   y_{j1} y_{k1} \right] \\ 
    &= \E [  \overline{y}_{j,-1}  \overline{y}_{k,-1}  ] \E[ y_{j1} y_{k1} ]
    + \frac{1}{n} \E \left[  \overline{y}_{j,-1} \right] \E \left[    y_{j1} y_{k1}^2 \right]
    +  \frac{1}{n} \E \left[   \overline{y}_{k,-1} \right] \E \left[  y_{j1}^2 y_{k1} \right]
      + \frac{1}{n^2} \E \left[     y_{j1}^2 y_{k1}^2 \right] \\ 
      & =  \E [  \overline{y}_{j,-1}  \overline{y}_{k,-1}  ] \Sigma_{j,k}
      + \frac{1}{n^2} \E \left[     y_{j1}^2 y_{k1}^2 \right].
\end{align*}
Note that 
$$\E [  \overline{y}_{j,-1}  \overline{y}_{k,-1}  ]
= \frac{1}{n^2 }
\sum_{i,i'=2}^n \E [ y_{ji} y_{ki'}] 
= \frac{1}{n^2 }
\sum_{i=2}^n \E [ y_{ji} y_{ki}] 
= \frac{n-1}{n^2} \Sigma_{kj}$$
This implies
\begin{align}
    \left|  \E [ T_{2,1,j,k}] \right| 
    \lesssim |\Sigma_{jk}| \left|\E [  \overline{y}_{j,-1}  \overline{y}_{k,-1}  ]   \right|  + \frac{1}{n^2}
    \lesssim \frac{1}{n} \Sigma_{kj}^2 + \frac{1}{n^2}. \label{eq_bound_t21}
\end{align}
Combining \eqref{eq_decomp_t2} with the bounds \eqref{eq_bound_t22}, \eqref{eq_bound_t23}, \eqref{eq_bound_t24}, \eqref{eq_bound_t21}, we obtain 
\begin{align} \label{eq_bound_t2}
    \left| \E [ T_{2,j,k}] \right| \lesssim \frac{1}{n^2} + \frac{1}{n} \Sigma_{jk}^2 .
\end{align}
In summary, we obtain using \eqref{eq_nu_diff}, \eqref{eq_bound_t1}, \eqref{eq_bound_t2} and assumption \ref{ass_sigma_null}
\begin{align} \label{eq_bound_nu_diff}
     \E  (\breve\nu_{n,1}^{1/2}  - \hat\nu_{n,1}^{1/2}  )^2
     \lesssim 1 + \frac{1}{n} || \bfSigma ||_F^2 
     \lesssim 1.
\end{align} 
\noindent\textbf{Step 2}
Note that $\breve\nu_n$ is unbiased for $\nu_n.$ Therefore, we get
\begin{align} \label{eq_mom_breve_nu}
   \E \left[ \frac{ \breve\nu_{n,1} }{\nu_n} \right] = \frac{n-1}{n \nu_n} \E \left[  \breve\nu_{n} \right] = 
    \frac{n-1}{n} \leq 1. 
\end{align}
From \eqref{eq_bound_nu_diff} and \eqref{eq_mom_breve_nu}, we also obtain 
\begin{align} \label{eq_mom_hat_nu}
    \E \left[ \frac{ \hat\nu_{n,1} }{\nu_n} \right]
    \lesssim \E \left[ \frac{ \lb \hat\nu_{n,1}^{1/2} - \breve\nu_{n,1}^{1/2} \rb^2 }{\nu_n} \right]
    + \E \left[ \frac{ \breve\nu_{n,1} }{\nu_n} \right]
    \lesssim 1. 
\end{align}
\noindent\textbf{Conclusion}
Using \eqref{eq_bound_nu_diff}, \eqref{eq_mom_breve_nu}, \eqref{eq_mom_hat_nu} and $\nu_n \gtrsim n$  \citep[see p.3 in the supplementary material of][]{lopes2019bootstrapping},  we obtain
\begin{align*}
    \E \left| \frac{ \breve\nu_n - \hat\nu_n}{\nu_n} \right| 
    & \lesssim  \nu_n^{-1} \E \left| \breve\nu_{n,1}  - \hat\nu_{n,1}  \right| 
    = \nu_n^{-1} \E \left| (\breve\nu_{n,1}^{1/2} - \hat\nu_{n,1}^{1/2}  ) (\breve\nu_{n,1}^{1/2}  + \hat\nu_{n,1}^{1/2}  ) \right| \\ 
   &  \leq \nu_n^{-1} \lb  \E  (\breve\nu_{n,1}^{1/2}  - \hat\nu_{n,1} ^{1/2} )^2   \E (\breve\nu_{n,1}^{1/2}  + \hat\nu_{n,1}^{1/2}  )^2  \rb^{1/2}
    \\ & \lesssim \Bigg\{ \frac{ \E  (\breve\nu_{n,1}^{1/2}  - \hat\nu_{n,1}^{1/2}  )^2}{\nu_n}  \lb \frac{ \E \breve\nu_{n,1} }{\nu_n} + \frac{ \E  \hat\nu_{n,1} }{\nu_n}  \rb \Bigg\}^{1/2} \\
    & \lesssim \Bigg\{ \frac{ \E  (\breve\nu_{n,1}^{1/2}  - \hat\nu_{n,1}^{1/2}  )^2}{\nu_n} \Bigg\}^{1/2} =o(1),
\end{align*}
which implies \eqref{eq_conv_breve_hat_nu}.
 \end{proof}

\subsection{Proofs of Lemma \ref{lem_mom_ineq_X} -  Lemma \ref{lem_mom_ineq_quad_term_tight}} \label{sec_proofs_aux_results_tight}

\begin{proof}[Proof of Lemma \ref{lem_mom_ineq_X}]
    W.l.o.g. assume that $\nt{1} > \nt{2}.$ To begin with, we write
    \begin{align*}
          D_{i,1} - D_{i,2}  
        & =  \nt{1} X_{i, 1:\nt{1} } 
         - \nt{2} X_{i, 1:\nt{2} } 
   +  (n - \nt{1}) X_{i, (\nt{1} + 1) :n  }
   -  (n - \nt{2}) X_{i, (\nt{2} + 1) :n  } \\ 
   & = Z_{1} + Z_2,
    \end{align*}
    where the random variable $Z_1$ and $Z_2$  are defined by $Z_1 = Z_{1,1} + Z_{1,2}$, $Z_2 = Z_{2,1} + Z_{2,2}$ and 
    \begin{align*}
        n Z_{1,1} & = 
        \lb \nt{1} - \nt{2} \rb  \sum_{i=1}^p X_{i, 1:\nt{1} }, \quad  
   n Z_{1,2} =  (  - \nt{1} + \nt{2} ) \sum_{i=1}^p X_{i, (\nt{1} + 1) :n  } 
   , \\
   n Z_{2,1} & = \nt{2} \sum_{i=1}^p \lb  X_{i, 1:\nt{1} } -  X_{i, 1:\nt{2} } \rb , \quad 
   n Z_{2,2}  = ( n - \nt{2} )  \sum_{i=1}^p \lb X_{i, (\nt{1} + 1) :n  } - X_{i, (\nt{2} + 1) :n  }  \rb .
    \end{align*}
    For reasons of symmetry, we restrict ourselves to a proof of the estimates
    \begin{align} \label{goal_mom_ineq}
          \E [ Z_{1,1} ^2] & \lesssim \left| \frac{\nt{1} - \nt{2} }{n} \right|^{1+d}, \quad 
          \E [ | Z_{2,1} | ^{2+\delta/2} ]  \lesssim \left| \frac{\nt{1} - \nt{2} }{n} \right|^{1+d}. 
    \end{align}
   Using formuala (9.8.6) in \cite{bai2004}, we get for the second moment of $Z_{1,1}$
    \begin{align*}
        \E [Z_{1,1}^2] 
        & = \lb \frac{ \nt{1} - \nt{2} }{n} \rb^2 \sum_{i=1}^p \E [ X_{i, 1:\nt{1}}^2]
        \lesssim \lb \frac{ \nt{1} - \nt{2} }{n} \rb^2,
    \end{align*}
    which proves  the first assertion in  \eqref{goal_mom_ineq}. 
    For a proof of the second estimate 
    {let $\tilde\bfP (i-1; 1 :\nt{2})$ denote a  $\nt{1}\times \nt{1}$-matrix with entries 
    \begin{align*}
    \lb \tilde\bfP (i-1; 1 :\nt{2}) \rb_{ij} = 
    \begin{cases}
    \lb \bfP (i-1; 1 :\nt{2}) \rb_{ij} ~ & \textnormal{if } 1 \leq i,j \leq \nt{2}, \\
    0 & \textnormal{else},
    \end{cases}
   \quad  1 \leq i,j \leq \nt{1}.
    \end{align*}}
    By Lemma 2.1 in \cite{li2003} and Lemma B.26 in \cite{bai2004}, we obtain for $Z_{2,1}$
    \begin{align}
         \E [ | Z_{2,1} | ^{2+\delta/2} ] 
         & \lesssim p^{\delta / 4} \sum_{i=1}^p  \E \left| X_{i, 1:\nt{1} } -  X_{i, 1:\nt{2} } \right|^{2+\delta/2} 
         \nonumber \\
         = & p^{\delta / 4} \sum_{i=1}^p  \E \left| \bfb_{i,1:\nt{1}}^\top \lb \frac{ \bfP (i-1; 1 :\nt{1}) }{\nt{1} - i + 1} - \frac{ \bfP (i-1; 1 :\nt{2}) }{\nt{2} - i +1}\rb \bfb_{i,1:\nt{1}}^\top \right|^{2 + \delta/2} \nonumber \\
         & \lesssim
         \sum_{i=1}^p  p^{\delta / 4}
         \left\{ \tr \lb \frac{ \bfP (i-1; 1 :\nt{1}) }{\nt{1} - i + 1} - \frac{ \tilde\bfP (i-1; 1 :\nt{2}) }{\nt{2} - i +1}\rb^2 \right\}^{1+\delta /4} . \label{mom_z21}
    \end{align}
    Note that 
    \begin{align*}
        & \tr \lb \frac{ \bfP (i-1; 1 :\nt{1}) }{\nt{1} - i +1} - \frac{\tilde\bfP (i-1; 1 :\nt{2})}{\nt{2} - i + 1} \rb^2 \\
        & = \frac{1}{\nt{1} - i +1} + \frac{1}{\nt{2} - i +1}
        - 2 \frac{1}{\nt{1} - i +1} \\ 
        &  = \frac{\nt{1} - \nt{2}}{( \nt{1} - i + 1) ( \nt{2} - i + 1)} \lesssim \frac{\nt{1} - \nt{2}}{n^2}.
    \end{align*}
    Combining this with \eqref{mom_z21}, the second statement in \eqref{goal_mom_ineq} follows.
\end{proof}
\begin{proof}[Proof of Lemma \ref{lem_mom_ineq_quad_term_tight}]
   We define
\begin{align}
    Q_{n,1,t} & = \sum_{i=1}^p  \E [ A_{i,t} | \mathcal{A}_{i-1 } ]  - \breve\sigma^2_{n,t} , \quad
    Q_{n,2,t}  = \sum_{i=1}^p  \lb  A_{i,t} -  \E [ A_{i,t} | \mathcal{A}_{i-1 } ] \rb ,
    \label{def_Q}
\end{align}
where $A_{it}$ is defined in \eqref{def_A_it}. 
Then, the decomposition \eqref{decomposition_Q}  is obviously true. 
Note that the definition of $\breve \sigma_{n,t}^2$ in \eqref{def_sigma_breve} implies 
\begin{align*}
    \sup_{\substack{t\in[t_0,1-t_0], \\  n\in\N}} \breve \sigma_{n,t}^2 \lesssim 1, 
\end{align*}
and that 
\begin{align*}
    \sup_{\substack{t\in[t_0,1-t_0], \\  n\in\N}} \left|  \sum_{i=1}^p  \E [ A_{i,t} | \mathcal{A}_{i-1 } ] \right| \lesssim 1
\end{align*}
almost surely. Thus, we conclude that $(Q_{n,1,t})$ is asymptotically tight in the space $\ell^\infty([t_0,1-t_0]) $, 
and it remains to show \eqref{mom_ineq_Q2}.
Applying Lemma 2.2 in \cite{li2003}, we obtain
\begin{align*}
    \E | Q_{2,n,t} |^{2 + \delta / 4}
    & 
    \lesssim p^{1 + \delta/8} \max_{1\leq i \leq p} \E \left[ \left| A_{i,t} - \E [ A_{i,t} | \mathcal{A}_{i-1} ] \right|^{2 + \delta / 4} \right]  \\
    & \lesssim p^{1 + \delta/8} \max_{1\leq i \leq p} 
    \E \left[ |  X_{i, 1:\nt{}}|^{4+\delta/2} 
    +  | X_{i, ( \nt{} +1) :n} |^{4+\delta/2}
    + |X_i|^{4+\delta/2}
    \right] , 
\end{align*}
and  Lemma B.26 in \cite{bai2004}  have 
\begin{align} \label{a1}
     \E \left[ |  X_{i, 1:\nt{}}|^{4+\delta/2}  \right] 
     \lesssim \frac{ \left[ \tr \left\{ \bfP ( i -1; 1  : \nt{} ) \right\}^2 \right]^{2 + \delta/4} }{ \lb \nt{} - i + 1 \rb^{4+\delta/2}  }
     = \frac{1}{\lb \nt{} - i + 1 \rb^{2+\delta/4} }
     \lesssim \frac{1}{n^{2 + \delta / 4 }},
\end{align}
uniformly with respect to  $1 \leq i \leq p$ and  $t\in [t_0,1-t_0]$. 
Similarly, one can show that 
\begin{align} \label{a2}
    \E \left[ | X_{i, ( \nt{} +1) :n} |^{2+\delta/2}
    + |X_i|^{2+\delta/2}\right] 
    \lesssim \frac{1}{n^{2 + \delta / 4 }},
\end{align}
 uniformly with respect to  $1 \leq i \leq p$ and  $t\in [t_0,1-t_0]$.
Finally, \eqref{a1} and \eqref{a2} imply 
\begin{align*}
  \sup_{t\in[t_0,1- t_0]}  \E | Q_{2,n,t} |^{2 + \delta / 4}
    \lesssim \frac{1}{n^{1 + \delta / 8}},
\end{align*}
and the assertion  of Lemma \ref{lem_mom_ineq_quad_term_tight} follows. 
\end{proof}

\end{document}